\newtheoremstyle{exercise} 
  {3pt} 
  {3pt} 
  {\scriptsize\rmfamily} 
  {
\parindent} 
  {\rmfamily\scshape} 
  {.} 
  {.5em} 
  {} 
\newtheoremstyle{newplain}
  {5pt}
  {5pt}
  {\itshape}
  {}
  {\rmfamily\scshape}
  {. ---}
  {.5em}
  {}
\newtheoremstyle{newremark}
  {5pt}
  {5pt}
  {\rmfamily}
  {}
  {\rmfamily\scshape}
  {. ---}
  {.5em}
  {}
\theoremstyle{newplain}
\newtheorem*{Theorem*}{Theorem} 
\newtheorem*{Theorem2}{Theorem 2.9}
\newtheorem*{Theorem1}{Theorem 2.8}
\theoremstyle{newplain}
\newtheorem{Theorem}{Theorem}
\newtheorem{Lemma}[Theorem]{Lemma}
\newtheorem{Corollary}[Theorem]{Corollary}
\newtheorem{Proposition}[Theorem]{Proposition}
\newtheorem{Definition}[Theorem]{Definition}
\newtheorem{cons}[Theorem]{Construction}
\newtheorem{ques}[Theorem]{Question}
\theoremstyle{newremark}
\newtheorem{Remark}[Theorem]{Remark}
\theoremstyle{exercise}
\numberwithin{Theorem}{section}
\numberwithin{Exercise}{section}
\def\e{\varepsilon}
\def\At{{\mathscr{A}}_{\Theta}}
\def\Ft{\mathscr{F}_{\Theta}}
\def\F1{{\mathscr F}}
\def\l{\lambda}
\def\vt{\vartheta}
\def\pf{\begin{proof}[{\bf Proof:}]\thst}
\def\thst{\mbox{}\newline}
\newcommand{\N}{\mathbb{N}} 
\newcommand{\R}{\mathbb{R}} 
\newcommand{\Rn}{\R^n}
\newcommand{\Hm}[1]{\mathscr{H}^{#1}} 
\newcommand{\cl}[1]{\mathcal{#1}}
\newcommand{\p}[1]{\theta_{#1}}
\newcommand{\calA}{\mathscr{A}}
\newcommand{\calB}{\mathscr{B}}
\newcommand{\calF}{\mathscr{F}}
\newcommand{\calH}{\mathscr{H}}
\newcommand{\calL}{\mathscr{L}}
\newcommand{\calN}{\mathscr{N}}
\newcommand{\calS}{\mathscr{S}}
\newcommand{\bin}[2]{
\begin{pmatrix} #1 \\
#2 
\end{pF}}
\def\XXint#1#2#3{{%
\setbox0=\hbox{$#1{#2#3}{\int}$}
\vcenter{\hbox{$#2#3$}}\kern-.5\wd0}}
\newcommand{\la}{\langle}
\newcommand{\ra}{\rangle}
\renewcommand{\leq}{\leqslant}
\renewcommand{\geq}{\geqslant}
\begin{document}


\title{A classification of Reifenberg properties}
\author{Amos N. Koeller}
\address {Mathematisches Institut der Universit\"at T\"ubingen \\ 
Auf der Morgenstelle 10 \\
72076 T\"ubingen \\
Germany }
\email{akoeller@everest.mathematik.uni-tuebingen.de}
\date{\today}

\begin{abstract}
We define twelve variants of a Reifenberg's affine approximation property, which are known to be connected with the singular sets of minimal surfaces. With this motivation we investigate the regularity of the sets possessing these. We classify the properties with respect to whether $j$-dimensional Hausdorff dimension, locally finite $j$-dimensional Hausdorff measure or countable $j$-rectifiability hold. In showing that varying levels of regularity hold for the differing properties, quasi-self-similar sets, interesting in their own right, are constructed as counter examples. These counter examples also admit a connection to number theory via the use of the normal number theorem. Additionally, the intriguing result that such complexity in the counter examples is actually a necessity is shown.
\end{abstract}

\thanks{This research was supported by a stipendium from the Max Planck Institute for Gravitational Physics and Geometric Analysis (Albert Einstein Institute) 	
}
\keywords{Reifenberg, rectifiability, Hausdorff measure, and Hausdorff dimension\\ 
{\it 2010 Mathematics subject classification}: primary 28A78 and 37F35, secondary 51M15 and 53A10}

\maketitle

\section{Introduction}
A subset $A\subset \R^n$ is said to possess the $j$-dimensional $\e$-Reifenberg property if $A\subset B_{\rho_0}(x)$ for some $x\in \R^n$ and $\rho_0>0$, and for all $y\in A$ and $\rho \in (0,\rho_0]$ there exists a $j$-dimensional plane $L_{y,\rho}$ such that 
\[d_{\Hm{}}(A\cap B_{\rho}(x),L_{y,\rho}\cap B_{\rho}(x))<\e \rho,\]
where $B_{\rho}(x)$ denotes the open ball of radius $\rho$ around a point $x$ and $d_{\Hm{}}$ denotes the \emph{Hausdorff distance} defined on two sets $A, B \subset \R^n$ of by
$$d_{\Hm{}}(A,B):= \max\left\{\sup_{x\in A}\inf_{y\in B}|x-y|, \sup_{y\in B}\inf_{x\in A}|x-y|\right\}.$$ 
Reifenberg \cite{reif} showed that sets satisfying such a property are bihomeomorphic to a $j$-dimensional disc. 
Variants of Reifenberg's approximation property have since been observed in various works (see, for e.g., ~\cite{davidtoro}, \cite{davkentor}, ~\cite{davdeptor}, \cite{depkoe} or \cite{kentoro}). Most important for our motivation, however, is the work of Simon \cite{simon2}, who instrumentally used a direct variation of Reifenberg's property in showing the rectifiability of a particular class of minimal surfaces. Simon's property is stronger than Reifenberg's, implying greater regularity for his sets satisfying the property than Refenberg's in that, as is shown in this article, the sets are indeed $j$-dimensional. We note, however, that it is not immediately clear that Simon's property is the optimal property to be observed when considering minimal surfaces. 

The differing levels of regularity following from these two variants of Reifenberg's definition and the wider general interest in properties resembling Reifenberg's lead, first of all, to an investigation into further variants of Reifenberg's property. The aim being to discover what strengths of planar approximations are necessary to provide given desirable levels of regularity. Other forms of regularity that may be hoped for, in addition to the $j$-dimensionality of the approximated sets, are that the set be of locally finite measure, or indeed that the set be $j$-rectifiable.

Observing such properties is made more important in the possibility of providing stronger and more direct regularity results for the singular sets of minimal surfaces, or indeed of extending the present results to a wider class of geometric flows.

In this paper we undertake exactly this investigation. We observe twelve variants of Reifenberg's original planar approximation property. Both Reifenberg's and Simon's properties are included in this investigation.

The twelve properties are classified with respect to which regularity properties they imply. Where, as mentioned, we consider regularity in the sense of below defined concepts of dimension, locally finite measure and rectifiability. A major part of the classification is the construction of intricate sets interesting in their own right. These sets can be thought of as quasi-self similar fractal sets. The interest lies in the fact that the sets are $1$ compact linearly well approximable sets in dimension $1$ in a Reifenberg sense established in this paper, but are purely unrectifiable and not $\sigma$-$\Hm{1}$-finite. It is in proving the pure unrectifiability that a connection with number theory is estabilshed, in that the normal number theorem is used in the proof.

In the following two sections we define and discuss the properties considered and prove the positive classification results. We continue in Section 4 by constructing counter examples sufficient to complete the classification. We conclude the classification in Section 5 by demonstrating the necessary irregularity properties. It is also in section 5 that the above mentioned interesting properties of the counter examples are discussed.

Having completed the classification, we observe that Simon's property, that known to be related to the singular sets of geometric flows, leads to the most interesting classification results. Simon's property is shown to provide $j$-dimensional sets, but allow for sets that are both unrectifiable and of locally infinite measure. This interesting result is made more acute by the fact that any examples of sets satisfying Simon's property and possessing locally infinite measure must necessarily be complex at all critical points. Complex in the sense that neighbourhoods of a critical point have infinite measure but may contain no piece of Lipschitz graph intersecting the critical point. 

In Section 6 we prove the above mentioned complexity result, and then note that this also provides a criterion for showing that singular sets cannot possess such irregular properties.

\section{Reifenberg approximation properties}
In this section we introduce the list of approximation properties to be classified as well as the main Theorems that are proved in this paper. We first introduce some nomenclature.

\begin{Definition}\label{nomen}
In this paper we will use $\overline{A}$ to denote the closure of a set $A$ and $B_r(x)$ to denote a ball of radius $r$ around the point $x$. 

We write $d_{\Hm{}}$ to denote the Hausdorff distance as well as $\Hm{s}$ and $dim_{\Hm{}}$ to denote the $s$-dimensional Hausdorff measure and the Hausdorff dimension respectively. 

We use $int(A)$ to denote the interior of a set $A$, $d(A)$ to denote the diameter of a set $A$ and $d(\cdot,\cdot)$ to denote the usual distance on Euclidean spaces. We may now define, for any $A\subset \R^n$ 
$$A^r:=\{x\in \R^n : d(x,A)<r\}$$
to be the \emph{$r$-parallel body of $A$}. We write $G(n,m)$ to denote the Grassmann Manifold of $m$-dimensional subspaces of $\R^n$ and for $y\in \R^n$ we define
$$G_y(n,m):=\{G+y:G\in G(n,m)\}.$$

Finally, let $e_1,...,e_j$ be the usual orthonormal basis vectors for $\R^n$. For $j<n$ we identify $\R^j$ with the subspace of $\R^n$ spanned by $\{e_1,...,e_j\}$.

More detailed definitions and descriptions of the above concepts can be found in most geometric measure theory or fractal geometry books. See for example \cite{falconer}, \cite{federer} or \cite{mattila}.

\end{Definition}
\begin{Remark}
Although omitted in some works on fractal geometry we include the normalising constant in the definition of Hausdorff measure necessary to ensure $\Hm{n}=\calL^n$ on $\R^n$, where, here, $\calL^n$ is the $n$-dimensional Lebesgue measure.
\end{Remark}

We now define the twelve variants of Reifenberg's property that are our main objects of consideration. The subtleties of the differences between the definitions are not initially easy to get an intuitive understanding of. Some further description of the definition is provided in the Remark following the definition.
\begin{Definition}\label{defa} \thst
Let $A \subset \R^n$ be an arbitrary set and $j \in \mathbb{N}$; then

(i) $A$ has the weak $j$-dimensional $\delta$-approximation property (or \emph{$wj$ property}) for some $0<\delta <1$ if, for all $y \in A$, there is a $\rho_y > 0$ such that for all $\rho \in (0,\rho_y]$ there exists $L_{y,\rho}\in G_y(n,j)$ such that $B_{\rho}(y) \cap A \subset L_{y, \rho}^{\delta \rho}$.

(ii) $A$ has the weak $j$-dimensional $\delta$-approximation property with local $\rho_y$-uniformity (or \emph{$w\rho j$ property}) for some $0<\delta <1$ if, for all $y \in A$, there is a $\rho_y > 0$ such that for all $\rho \in (0,\rho_y]$ and all $x \in B_{\rho_y}(y) \cap A$, there exists $L_{x,\rho} \in G_x(j,n)$ such that $B_{\rho}(x) \cap A \subset L_{x, \rho}^{\delta \rho}$.

(iii)The property (i) is said to be $\rho_0$-uniform (referred to as the \emph{$w\rho_0 j$ property}), if $A$ is contained in some ball of radius $\rho_0$ and if, for every $y \in A$ and every $\rho \in (0,\rho_0]$, there exists $L_{y,\rho}\in G_y(j,n)$ such that $B_{\rho}(y) \cap A \subset L_{y, \rho}^{\delta \rho}$.

(iv) $A$ is said to have the fine weak $j$-dimensional approximation property (or \emph{$w\delta j$ property}) if A satisfies (i) for each $\delta>0$.

(v) $A$ is said to have the fine weak $j$-dimensional approximation property with local $\rho_y$-uniformity (or \emph{$w\rho\delta j$ property}) if $A$ satisfies (ii) for each $\delta > 0$.

(vi) $A$ is said to have the fine weak $j$-dimensional approximation property with \emph{$\rho_0$-uniformity} (or $w\rho_0\delta j$ property) if $A$ satisfies (iii) for each $\delta>0$.

(vii) $A$ is said to have the strong $j$-dimensional $\delta$-approximation property (or \emph{$s j$ property}) for some $0<\delta <1$ if, for each $y \in A$, there exists $L_y \in G_y(j,n)$ such that definition (i) holds with $L_{y, \rho} = L_y$ for every $\rho \in (0,\rho_y]$.

(viii) $A$ is said to have the strong $j$-dimensional $\delta$-approximation property (or \emph{$s\rho j$ property}) with local $\rho_y$-uniformity for some $0<\delta <1$ if, for all $y \in A$, there exists $L_y\in G(j,n)$ such that for all $x \in B_{\rho_y}(y)$ and all $\rho \in (0,\rho_y]$ we have $B_{\rho}(x) \cap A \subset (L_y+x)^{\delta \rho}.$ 

(ix) The property in (viii) is said to be $\rho_0$-uniform (referred to as the \emph{$s\rho_0 j$ property}) if $A$ is contained in some ball of radius $\rho_0$ and there exists $L\in G(j,n)$ such that for each $x\in A$ and $\rho\in (0,\rho_0]$
$B_{\rho}(x)\cap A \subset (L+x)^{\delta\rho}.$

(x) $A$ is said to have the fine strong $j$-dimensional approximation property (or \emph{$s\delta j$ property}) if $A$ satisfies (vii) for each $\delta>0$.

(xi) $A$ is said to have the fine strong $j$-dimensional approximation property with local $\rho_y$-uniformity (or \emph{$s\rho\delta j$ property}) if $A$ satisfies (viii) for each $\delta>0$.

(xii) $A$ is said to have the fine strong $j$-dimensional approximation property with $\rho_0$-uniformity (or \emph{$s\rho_0\delta j$ property}) if $A$ satisfies (ix) for each $\delta>0$.

Such a property as defined above will be referred to in general as a \emph{$j$-dimensional Reifenberg property} or a \emph{Reifenberg property} if the dimension is clear from the context.

For $\alpha\in \{w,s\}$, $\beta\in \{\emptyset, \rho, \rho_0\}$, $\gamma\in \{\delta\}\cup (0,1)=:\Delta$ and $j\leq n$ we write 
$R(\alpha, \beta,\gamma ; j)$ to denote the set of subsets of $\R^n$ satisfying the $\alpha\beta\gamma j$ property if $\gamma=\delta$ and to denote the set of subsets of $\Rn$ satisfying the $\alpha\beta j$ property with respect to $\gamma$ otherwise.
\end{Definition} \noindent

\begin{Remark}
It is, at first, not easy to gain an overview of the properties and their differences. It should be noted that there are three main ingredients in the definitions:

Firstly, whether the approximation is weak or strong. A weak approximation allows, for a given point, the approximating plane to vary with the reducing radius, the strong property does not.

Secondly, $\rho_y$ or $\rho_0$ uniformity. A $\rho_y$ uniform set must satisfy an appropriate approximation property at a given scale on whole neighbourhoods of points, otherwise, the appropriate approximation property need only be satisfied at the given selected point. $\rho_0$ uniformity requires that such a neighbourhood, in fact, be the entire set.

Thirdly, we differentiate between those approximations which are $\delta$-fine, and those which are not. A $\delta$-fine approximation requires that the definition holds for arbitrary $\delta>0$, otherwise we require only that the property hold for some given $\delta$. Clearly if a set is $\delta$-approximable then it is $\eta$-approximable for each $\eta \geq \delta$ and thus we are interested in arbitrarily small $\delta$ for $\delta$-fine approximations.

With this understanding of content, the property name abbreviations ($w j$ property, $s\rho j$ property etc.) and set notation $R(\alpha,\beta, \gamma; j)$ can be seen to be representative of the defining ingredients of the definition, which helps to distinguish which property is being referred to.

Clearly strong approximations are stronger than weak. Similarly $\rho_0$-uniform approximations are stronger than $\rho_y$-approximations, which in turn are stronger than approximations without radius uniformity. Moreover $\delta$-fine approximations are stronger than approximations which are not $\delta$-fine. With this in mind, it can be seen that the properties are listed in essentially ascending order of strength. It is this remark that is stated formally in Proposition $\ref{Rs}$.

Furthermore, note that, for all but Section 6 in this paper, the approximations can be taken to be two sided. That is, each occurrence of an expression of the form $B_{\rho}(y) \cap A \subset L_{y,\rho}^{\delta \rho}$ can be replaced with the appropriate expression of the form $d_{\calH}(B_{\rho}(y)\cap A, B_{\rho}(y) \cap L_{y,\rho})$. This leads to weaker regularity but stronger irregularity results. The concentration on one sidedness here is due to potential application to singular sets for which it is the one sided approximation type that arises.

Note finally that the motivating property considered by Simon in \cite{simon2} is exactly the $w\rho\delta j$ property. The property originally considered by Reifenberg in \cite{reif} can be stated as the two sided version of the $w\rho_0 j$ property.
\end{Remark}

We note formally some important relationships between the sets $R(\alpha, \beta, \gamma; j)$ following from the definition.
\begin{Proposition}\label{Rs}
Let $j, n\in \N$, $j\leq n$, $\alpha\in \{w,s\}$, $\beta\in \{\emptyset, \rho, \rho_0\}$ and $\gamma\in \Delta$.
Then
$$R(s, \beta, \gamma; j) \subset R(w, \beta, \gamma; j),$$
$$R(\alpha, \rho_0, \gamma; j) \subset R(\alpha, \rho, \gamma; j)\subset R(\alpha, \emptyset, \gamma; j),$$
$$R(\alpha, \beta, \delta; j) \subset R(\alpha, \beta, \gamma_1; j) \subset R(\alpha, \beta, \gamma_2; j) \hbox{ for }0<\gamma_1\leq \gamma_2\leq 1,  \hbox{ and}$$
$$R(\alpha, \beta, \gamma; j) \subset R(\alpha, \beta, \gamma; j+1).$$
Furthermore, if $A\subset B\in  R(\alpha, \beta, \gamma; j)$, then $A\in R(\alpha, \beta, \gamma; j)$.
\end{Proposition}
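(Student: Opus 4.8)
The plan is to verify each containment directly from Definition \ref{defa}, since every assertion reduces to a set-theoretic manipulation of the defining inclusions $B_\rho(y) \cap A \subset L^{\delta\rho}$ together with two elementary geometric observations. The first is that parallel bodies are monotone in the radius, so that $0 < r_1 \leq r_2$ forces $L^{r_1} \subset L^{r_2}$. The second is that any $j$-plane $L \in G_y(n,j)$ is contained in some $(j+1)$-plane $L' \in G_y(n,j+1)$, and that $L \subset L'$ yields $L^{r} \subset (L')^{r}$ for every $r > 0$. I would record these two facts at the outset and then treat the five groups of inclusions in turn.

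For the strong-to-weak inclusion $R(s,\beta,\gamma;j) \subset R(w,\beta,\gamma;j)$, I would observe that the strong property already furnishes a single plane $L_y$ (respectively $L_y+x$) valid at all scales; setting $L_{y,\rho} := L_y$ for every $\rho$ then exhibits the weak property verbatim, uniformly in $\beta$ and $\gamma$. For the uniformity chain, to pass from $\rho_0$-uniformity to $\rho_y$-uniformity I would take $\rho_y := \rho_0$ for each $y$; since the $\rho_0$-uniform hypothesis supplies an approximating plane at every point $x \in A$ and every scale $\rho \leq \rho_0$, it applies in particular to every $x \in B_{\rho_y}(y) \cap A$. To drop to no uniformity I would specialise $x=y$, which is legitimate because $y \in B_{\rho_y}(y) \cap A$. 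The fineness chain is handled by the first fact: the $\delta$-fine hypothesis ($\gamma=\delta$) asserts the property for every positive tolerance, hence for the fixed value $\gamma_1$; and for $\gamma_1 \leq \gamma_2$ the very plane achieving $B_\rho(y) \cap A \subset L^{\gamma_1\rho}$ also achieves $B_\rho(y) \cap A \subset L^{\gamma_2\rho}$ because $L^{\gamma_1\rho} \subset L^{\gamma_2\rho}$.

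The dimensional inclusion $R(\alpha,\beta,\gamma;j) \subset R(\alpha,\beta,\gamma;j+1)$ is where the second fact enters: given the $j$-dimensional approximating plane (a fixed $L_y$ in the strong case, a scale-dependent $L_{y,\rho}$ in the weak case), I would enlarge it to a $(j+1)$-plane through the same point and invoke $L^{\delta\rho} \subset (L')^{\delta\rho}$; in the strong case a single fixed enlargement of $L_y$ serves for all scales, so the strong structure is preserved. Finally, for the hereditary statement, if $A \subset B$ with $B \in R(\alpha,\beta,\gamma;j)$, then for $y \in A$ the plane provided by $B$ satisfies $B_\rho(y) \cap A \subset B_\rho(y) \cap B \subset L^{\delta\rho}$, so the same plane witnesses the property for $A$; the neighbourhood condition in the $\rho_y$-case survives because $B_{\rho_y}(y) \cap A \subset B_{\rho_y}(y) \cap B$, and in the $\rho_0$-case one notes additionally that $A$ lies in the same ball of radius $\rho_0$ that contains $B$.

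None of the steps is deep; the only real work is bookkeeping, making sure each argument is carried out uniformly across the three values of $\beta$, the two values of $\alpha$, and the possibilities for $\gamma$. The subtlest points — and thus what I would check most carefully — are that $\rho_0$-uniformity is genuinely inherited by subsets (which uses the containing-ball clause, not merely the approximation clause) and that the enlargement of the plane in the dimensional step is effected by a \emph{fixed} plane in the strong case, so as not to inadvertently weaken the strong property to a weak one.
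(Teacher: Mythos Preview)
Your proposal is correct. The paper does not supply a proof of this proposition at all: it is stated as ``following from the definition'' and left to the reader, so your direct verification from Definition~\ref{defa} is precisely what the paper intends and your bookkeeping across the three axes $(\alpha,\beta,\gamma)$ is sound, including the two points you flag as subtle (inheritance of the containing-ball clause under subsets, and fixing a single $(j{+}1)$-plane in the strong case).
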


We classify the properties in Definition $\ref{defa}$ by the level of regularity they ensure. Since, as shall be seen, it is certainly possible that sets with Hausdorff dimension greater than $j$ satisfy $j$-dimensional approximations no great level of regularity can be expected. We therefore make a classification with respect to dimension, locally finite measure and rectifiability. More formally, the regularity properties with which we classify the above linear approximation properties can be stated as follows.

\begin{Definition}\label{regproperties}
$A$ is said to have \emph{strongly locally finite $\Hm{j}$ measure} (or \emph{strong local $\Hm{j}$-finiteness}) if for all compact subsets $K \subset \R^n$, $\Hm{j}(K \cap A) <\infty ,$ or equivalently, if for all $y \in \R^n$ there exists a radius $\rho_y > 0$ such that $\Hm{j}(B_{\rho_y}(y) \cap A) <\infty.$

$A$ is said to have \emph{weakly locally finite $\Hm{j}$ measure} (or \emph{weak local $\Hm{j}$-finiteness}) if for each $y \in A$ there exists a radius $\rho_y > 0$ such that $\Hm{j}(B_{\rho_y}(y) \cap A ) < \infty.$

$A$ is said to be $\Hm{j}$-$\sigma$-finite if there is a decomposition 
$$A=\bigcup_{i=1}^{\infty}A_i$$
such that $\Hm{j}(A_i)<\infty$ for each $i\in \N$.

$A$ will be said to be \emph{countably $j$-rectifiable} or simply \emph{$j$-rectifiable} if there exist $M_0\subset\R^n$ and Lipschitz functions $\{f_i\}_{i=1}^{\infty}$ satisfying 
$$A\subset M_0 \cup \bigcup_{i=1}^{\infty}f_i(\R^j)$$
and $\Hm{j}(M_0)=0$. Finally, $A$ is said to be purely countably $j$-unrectifiable if for all $j$-rectifiable subsets $F\subset A$, $\Hm{1}(F)=0$.
\end{Definition}
\begin{Remark}
We have included two types of locally finite measure because we wish to make a general classification and because both types occur in the literature. For weakly locally finite measure see, for e.g., \cite{ecker1} or \cite{evandga}. For strongly locally finite measure see, for e.g., \cite{brakke}, \cite{simon2} or \cite{simon3} . Note also that the two definitions are not the same. Consider for example $\calN:=\bigcup_{n=1}^{\infty}\R \times\left\{\frac{1}{n}\right\}$, which is weakly but not strongly locally $\Hm{1}$ finite.

Note also that we do not, as often is the case, require that a rectifiable set be of locally finite measure in either sense.
\end{Remark}
With the above regularity properties established, we can formulate the regularity questions we ask of the Reifenberg-like properties in Definition $\ref{defa}$.
\begin{ques}\label{ques1}
For each $\alpha\in \{w,s\}$, $\beta\in \{\emptyset, \rho, \rho_0\}$ and $\gamma\in \Delta$, does $P\in R(\alpha,\beta,\gamma;j)$ imply that $P$
\begin{enumerate}
\item has dimension less than or equal to $j$?,
\item has - (a) weakly or (b) strongly - locally finite $\Hm{j}$-measure?,
\item is $j$-rectifiable?.
\end{enumerate}
for each $j\in \N$?
\end{ques} \noindent
Our classification will answer these questions. We formulate such an answer by saying that the answer to a given property and question is either yes or no. For example, there are sets satisfying the $w j$ property with Hausdorff dimension greater than $j$. We therefore say that the answer to $w j$ (1) is no. With this system of classification, our first main theorem can be stated as follows.
\begin{Theorem}\label{classification}
The properties defined in Definition $\ref{defa}$ satisfy the classification given in the table below with respect to the questions given in Question $\ref{ques1}$.
\begin{equation} \label{tab2}
\begin{tabular}[h]{lccc}

\hbox{Property} &  & \hbox{Question} &     \nonumber \\
\hline
& & &  \nonumber \\
& (1) & (2) & (3) \nonumber \\
&  &  \hbox{ (a), (b)}&     \nonumber \\ \hline
& & &  \nonumber \\
$w j$ & \hbox{No} & \hbox{No, No}  & No  \nonumber \\
$w\rho j$  & \hbox{No} & \hbox{No, No} & No  \nonumber \\
$w\rho_0 j$ & \hbox{No}  & \hbox{No, No} & No  \nonumber \\
$w\delta j$ & \hbox{Yes} & \hbox{No, No}& No  \nonumber \\
$w\rho\delta j$ & \hbox{Yes} &  \hbox{No, No}& No  \nonumber \\
$w\rho_0\delta j$ & \hbox{Yes} & \hbox{Yes, Yes} & Yes  \nonumber \\
$s j$ & \hbox{Yes}  & \hbox{No, No} & Yes \nonumber \\
$s\rho j$ & \hbox{Yes} & \hbox{Yes, No}& Yes  \nonumber \\
$s\rho_0 j$ & \hbox{Yes} &  \hbox{Yes, Yes}& Yes  \nonumber \\
$s \delta j$ & \hbox{Yes} & \hbox{No, No} & Yes  \nonumber \\
$s \rho\delta j$& \hbox{Yes}  & \hbox{Yes, No} & Yes \nonumber \\
$s\rho_0\delta j$ & \hbox{Yes} & \hbox{Yes, Yes} & Yes  \nonumber \\ \hline

\end{tabular}
\end{equation}
\end{Theorem}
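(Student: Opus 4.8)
The plan is to exploit the monotonicity recorded in Proposition \ref{Rs} to collapse the $12\times 4$ table to a handful of extremal cases, and then to split the remaining work into positive (regularity) statements, proved for the \emph{weakest} properties that should satisfy them, and counterexamples, constructed for the \emph{strongest} properties that should fail them. The guiding principle is that each regularity property in Question \ref{ques1} is inherited by subsets (last clause of Proposition \ref{Rs}) and is monotone under the inclusions $R(s,\cdot,\cdot;j)\subset R(w,\cdot,\cdot;j)$, $R(\cdot,\rho_0,\cdot;j)\subset R(\cdot,\rho,\cdot;j)\subset R(\cdot,\emptyset,\cdot;j)$ and $R(\cdot,\cdot,\delta;j)\subset R(\cdot,\cdot,\gamma;j)$: a ``Yes'' answer automatically propagates to every stronger (smaller) class, while a ``No'' answer propagates to every weaker (larger) class. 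Thus it suffices to verify ``Yes'' on the minimal Yes-classes and to exhibit counterexamples in the maximal No-classes.

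\textbf{Positive results.} I would establish five implications. (i) $w\delta j\Rightarrow$ Hausdorff dimension $\le j$; since every $\delta$-fine property lies in $R(w,\emptyset,\delta;j)$, this settles column (1) for all six fine rows. The mechanism is a covering estimate: fineness forces $B_\rho(y)\cap A$ into an arbitrarily thin slab $L_{y,\rho}^{\delta\rho}$ about a $j$-plane, so for every $s>j$ the number of $\rho$-balls needed to cover $A\cap B_\rho$ grows like $\rho^{-j}$, giving $\Hm{s}(A)=0$. (ii) $sj\Rightarrow j$-rectifiability: the fixed plane $L_y$ with $B_\rho(y)\cap A\subset(L_y+y)^{\delta\rho}$ is a one-point radial Lipschitz-cone condition; decomposing $A$ into pieces on which the cone direction is stable yields countably many Lipschitz graphs, and rectifiability in turn forces dimension $\le j$ (as $\Hm{j}(M_0)=0$ gives $\dim M_0\le j$), settling (1) and (3) for all six strong rows. (iii) $s\rho j\Rightarrow$ weak local $\Hm{j}$-finiteness: now the cone condition holds uniformly on a whole neighbourhood $B_{\rho_y}(y)$, upgrading the one-point cone to a genuine two-point Lipschitz-graph estimate over $L_y$, so $B_{\rho_y}(y)\cap A$ is a Lipschitz graph of finite measure. (iv) $s\rho_0 j\Rightarrow$ strong local $\Hm{j}$-finiteness: with global ($\rho_0$) uniformity the graph is defined over a single plane on all of $A$, giving a uniform measure bound and hence finiteness on every compact set. (v) The heaviest positive case is $w\rho_0\delta j$, which must yield (1),(2b),(3) at once; here the planes vary with scale, so I would run a Reifenberg-type tilt-summation argument, comparing $L_{y,\rho}$ with $L_{y,\rho/2}$ and using that both approximate $A\cap B_\rho$ well, to show a tangent plane exists everywhere, that $A$ admits a Lipschitz parametrisation of a $j$-disc (hence rectifiability), and that a packing count at each dyadic scale bounds the total $\Hm{j}$, yielding strong (and a fortiori weak) local finiteness.

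\textbf{Counterexamples.} Four constructions, carried out in Sections 4--5, close the table. (A) A self-similar set in $R(w,\rho_0,\gamma;j)$ of Hausdorff dimension strictly greater than $j$ gives ``No'' for (1)--(3) in the three non-fine weak rows ($wj,w\rho j,w\rho_0 j$), since dimension $>j$ forces $\Hm{j}$ locally infinite and non-rectifiability simultaneously. (B) The quasi-self-similar fractal of the introduction: a compact set in $R(w,\rho,\delta;j)$ of dimension exactly $j$ that is \emph{not} $\Hm{j}$-$\sigma$-finite (hence not even weakly locally finite) and purely $j$-unrectifiable; by the inclusions it also lies in $wj,w\rho j,w\delta j$, delivering ``No'' for (2a) and (3) in all four rows. (C) A strong, fine, \emph{non}-uniform set in $R(s,\emptyset,\delta;j)$ concentrating infinite $\Hm{j}$ inside a shrinking cusp at a point of the set, giving ``No'' for (2a) in rows $sj,s\delta j$. (D) A comb such as a truncation of $\calN=\bigcup_{n}\R\times\{1/n\}$, which lies in $R(s,\rho,\delta;j)$ and is weakly but not strongly locally finite (the failure occurring at limit points not in the set), giving ``No'' for (2b) in rows $sj,s\rho j,s\delta j,s\rho\delta j$.

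\textbf{Main obstacle.} The crux is construction (B) and its analysis. Arranging a set that is $\delta$-approximable for \emph{every} $\delta$ (hence of dimension exactly $j$ and everywhere well modelled by planes) while being both non-$\sigma$-finite and purely unrectifiable is delicate, and the pure unrectifiability is precisely where the argument leaves pure geometry: I expect to encode the directions available to Lipschitz subgraphs as digit strings and to invoke the normal number theorem to show that $\Hm{j}$-almost every point of the set meets infinitely many mutually incompatible directions, so that no piece of positive $\Hm{j}$-measure can lie on a single Lipschitz graph. The secondary difficulty is the $w\rho_0\delta j$ regularity in step (v), where the scale-dependence of the planes rules out the simple cone arguments available for the strong properties and forces the full tilt-summation and packing machinery.
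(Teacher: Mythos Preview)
Your overall architecture---propagating ``Yes'' downward and ``No'' upward via Proposition~\ref{Rs}, then handling the extremal cases---matches the paper exactly, as do the four counterexample constructions (A)--(D) and their roles.

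The one substantive discrepancy is step~(v). You flag $w\rho_0\delta j$ as ``the heaviest positive case'' requiring ``the full tilt-summation and packing machinery''; in fact it is the \emph{easiest}, and the paper disposes of it in a few lines (Lemma~\ref{propertyiv}). The observation you miss is that the radius $\rho_0$ is a single fixed scale that works for \emph{every} $\delta$. Hence, for each $y\in A$ and a sequence $\delta_k\searrow 0$, there are $j$-planes $L_k\ni y$ with $B_{\rho_0}(y)\cap A\subset L_k^{\delta_k\rho_0}$; by compactness of $G(n,j)$ the $L_k$ subconverge to some $L$, and then $B_{\rho_0}(y)\cap A\subset L$. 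Covering $\overline A$ by finitely many such balls exhibits $A$ as a subset of finitely many affine $j$-planes, from which (1), (2)(a), (2)(b) and (3) are all immediate. No multi-scale comparison is needed. Moreover, your proposed route---a Reifenberg parametrisation of $A$ by a $j$-disc---would not go through under the one-sided inclusion hypothesis: a Cantor set contained in a line satisfies $w\rho_0\delta 1$ but is certainly not a topological disc. The tilt-summation idea is appropriate for the two-sided (Hausdorff-distance) Reifenberg condition, not for the one-sided property treated here.

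Everything else in your plan is correct and aligned with the paper; only the perceived difficulty (and the method) for $w\rho_0\delta j$ needs revision.
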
 \noindent

The proof of this theorem is a summary of results proven in Sections 3-5, and will be presented as such at the conclusion of Section 5.

In proving Theorem $\ref{classification}$, a very interesting example set arises which, despite satisfying very good linear approximation properties has very irregular measure properties. Our second main Theorem states the existence of this set. What makes the result even more interesting is the connection with number theory that is developed in proving that the set is purely countably $1$-unrectifiable, see Lemma $\ref{unrect}$, in that it makes use of the normal number theorem. 

Our second main theorem is stated below.

\begin{Theorem}\label{main2}
There is a compact set $A\subset \R^2$ with the following properties
\begin{enumerate}[(i)]
\item $A\in R(w,\rho,\delta;1)$,
\item $dim_{\Hm{}}A=1$,
\item $A$ is neither weakly nor strongly locally $\Hm{1}$-finite,
\item $A$ is not $\Hm{1}$-$\sigma$-finite and
\item $A$ is purely countably $j$-unrectifiable.
\end{enumerate}
\end{Theorem}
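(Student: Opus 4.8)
The plan is to construct $A$ explicitly as a quasi-self-similar Cantor-type set in $\R^2$, built by an iterated scheme that produces the ``good linear approximation but bad measure'' dichotomy. I would fix a sequence of scales $\rho_k \downarrow 0$ and, at each stage, replace each surviving piece by a controlled number $N_k$ of smaller pieces arranged so that, locally, they cluster along a $1$-dimensional direction to within error $\delta\rho$; this is what will deliver the $R(w,\rho,\delta;1)$ membership in (i). The essential design tension is that the number of children $N_k$ and the contraction ratios $r_k$ must be chosen so that (a) at every scale the offspring lie within a $\delta\rho$-neighbourhood of \emph{some} line through each point (so the weak, $\rho$-uniform, fine approximation holds), yet (b) the branching is fast enough that the natural dimension is exactly $1$ while the $\Hm{1}$-measure blows up.

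The key steps, in order, are as follows. First I would give the construction precisely: specify the nested compact sets $A = \bigcap_k A_k$ with $A_k$ a union of small ``cells,'' recording the placement, count $N_k$, and diameter of the cells at each level, choosing parameters (e.g. $N_k$ growing and $r_k$ decaying) so that $\log N_k / \log(1/r_k) \to 1$. Second, I would verify (i), the $w\rho\delta$-type one-sided linear approximation: for each $y\in A$ and each $\rho\le\rho_y$, exhibit the approximating line $L_{y,\rho}$ and check $B_\rho(y)\cap A \subset L_{y,\rho}^{\delta\rho}$ uniformly on a neighbourhood of $y$, for every $\delta>0$ at small enough scales. Third, (ii): a standard mass-distribution / Frostman argument gives $\dim_{\Hm{}}A \ge 1$ from a measure on $A$ with the right scaling, while the lower box/Hausdorff bound $\dim_{\Hm{}}A\le 1$ comes from an efficient cover by the cells; the parameter choice forces equality. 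Fourth, (iii) and (iv): by arranging the total $\Hm{1}$-premeasure of the level-$k$ covers to diverge (choosing the cell counts so $\sum N_k r_k = \infty$ in the relevant sense), I would show any ball meets infinitely much of $A$, giving failure of both weak and strong local finiteness, and then upgrade to non-$\sigma$-finiteness by showing every positive-measure Borel piece already carries infinite $\Hm{1}$, ruling out a countable finite-measure decomposition.

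The main obstacle will be step five, the pure unrectifiability (v), which the paper flags as using the normal number theorem. The difficulty is that an arbitrary self-similar Cantor set of dimension $1$ need not be unrectifiable, so the \emph{placement} of cells must be engineered so that no Lipschitz graph can capture positive measure. I would encode the position of each cell by a digit string and arrange the branching directions according to a prescribed digit pattern; the point is that a point lying on a rectifiable curve would force its digit expansion to have anomalous (non-generic) frequency statistics, so that the set of such points lies in the measure-zero complement of the normal numbers. Concretely, I would show that along any candidate Lipschitz graph the tangent directions would have to align in a way forcing a biased digit frequency, then invoke the normal number theorem to conclude this can happen only on an $\Hm{1}$-null subset, hence every rectifiable $F\subset A$ has $\Hm{1}(F)=0$. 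This probabilistic-combinatorial alignment argument, rather than the measure and dimension bookkeeping, is where the real work lies, and it is exactly what Lemma \ref{unrect} is expected to supply.
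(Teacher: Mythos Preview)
Your architecture is broadly correct and in the same spirit as the paper, but the implementation differs in several respects worth noting. The paper does not use a Cantor-type set with growing $N_k$. It builds a Koch-curve variant $\At$ with exactly two children per edge and a decreasing sequence of base angles $\theta_n\to 0$; the contraction ratio is $(2\cos\theta_n)^{-1}$, and the condition for infinite $\Hm{1}$-mass is $\prod_n(\cos\theta_n)^{-1}=\infty$ (the class $\Psi_\infty$). One subtlety you do not anticipate: the full curve $\At$ fails the fine approximation at the countable set $E(\Theta)$ of corner vertices, so the paper takes $A=\Ft(A_0)$ for a compact $A_0\subset[0,1]$ of positive Lebesgue measure avoiding both the dyadic points and the null complement of the rotation set $R(\Theta)$. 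Your Frostman plan for (ii) is also unnecessary: once (i) holds, $\dim_{\Hm{}}A\le 1$ is automatic from Lemma~\ref{lem2}(i), and the lower bound is simply $\Hm{1}(A)>0$, obtained together with (iii) and (iv) from Proposition~\ref{HF1infinite}, which shows the parametrisation $\Ft$ satisfies $\Hm{1}(\Ft(K))\ge\Hm{1}(K)/8$ and, when $\Theta\in\Psi_\infty$, $\Hm{1}(\Ft(K))=\infty$ whenever $\Hm{1}(K)>0$.

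For (v) your description needs sharpening; as written it is not quite the mechanism that works. The paper does not argue ``Lipschitz graph $\Rightarrow$ biased digit frequency''. Instead it shows directly (Corollary~\ref{absthetasum}) that each occurrence of the block $1010\ldots10$ of length $M$ in the binary expansion of $x$ forces the local approximating direction at $\Ft(x)$ to rotate monotonically by $\sum\theta_n$ over those $M$ levels; with $M$ chosen so this exceeds $2\pi$, the normal number theorem guarantees such blocks recur infinitely often for a.e.\ $x$, whence $\Hm{1}(R(\Theta))=1$. Lemma~\ref{unrect} then shows that at any point of $\Ft(R(\Theta))$ these repeated full rotations are incompatible with the existence of an approximate tangent $1$-plane, and pure unrectifiability follows from the tangent-plane characterisation of rectifiability (Theorem~\ref{tspace}). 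So the logical flow is ``generic digits $\Rightarrow$ recurring block $\Rightarrow$ unbounded rotation of the local direction $\Rightarrow$ no approximate tangent'', rather than the contrapositive frequency argument you sketch.
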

A proof of Theorem $\ref{main2}$ is given immediately prior to that of Theorem $\ref{classification}$ at the conclusion of Section 5.

\section{Regular Properties}
In this section we prove all of the regular properties that can be deduced from the linear approximation properties. That is, we prove all of the parts of the classification that can be answered with yes. Central to several of the regular results is the following lemma summarising results already in the literature.
\begin{Lemma}\label{lem2} Let $j,n\in \N$, $j\leq n$ and $A\subset \R^n$. Then,
\begin{enumerate}[(i)]
\item there is a function $\beta:[0,\infty) \rightarrow [0,\infty)$ with $\lim_{\eta \searrow 0}\beta(\eta)=0$ 
such that if $A\subset \R^n$ and $A \in R(w,\rho,\eta;j)$ for some $\eta\in (0,1)$, $\Hm{j+\beta(\eta)}(A)=0$, 
\item if $A \in R(s,\emptyset,\eta;j)$ for some $\eta\in (0,1)$, 
$A \subset \cup_{k=1}^{\infty}G_k$; where each $G_k$ is the graph of some Lipschitz function over some 
$j$-dimensional subspace of $\R^n$, and
\item if $A \in R(s,\rho_0,\eta;j)$ for some $\eta\in (0,1)$, 
$A \subset \cup_{k=1}^QG_k$; where $G_k$ is the graph of some Lipschitz function over some $j$-dimensional subspace 
of $\R^n$.
\end{enumerate}
\end{Lemma}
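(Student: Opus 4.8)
The plan is to treat the three parts by their distinct mechanisms: a standard iterated covering (box-counting) estimate for (i), and a pointwise cone condition feeding a Lipschitz-graph decomposition for (ii) and (iii). For (i) I would iterate the slab covering supplied by the $w\rho\eta j$ property. Fix $y\in A$; for every $x\in B_{\rho_y}(y)\cap A$ and every $\rho\le\rho_y$ the set $B_\rho(x)\cap A$ lies in the $\eta\rho$-slab $L_{x,\rho}^{\eta\rho}$ about a $j$-plane. Such a slab meeting $B_\rho(x)$ is covered by at most $C\eta^{-j}$ balls of radius comparable to $\eta\rho$ (the factor $\eta^{-j}$ counting the cubes of side $\eta\rho$ along the $j$ long directions, $C=C(n)$); after re-centring each covering ball that meets $A$ at a point of $A$ it contains (at the cost of a fixed factor $c_1=c_1(n)$ in the radius), local uniformity guarantees that each new ball is again of the kind to which the property applies, so the construction iterates. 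After $k$ steps $A\cap B_{\rho_y}(y)$ is covered by $(C\eta^{-j})^k$ balls of radius $(c_1\eta)^k\rho_y$, whence for $s=j+\beta$,
\[
\sum(\mathrm{radii})^s=\big(C\,c_1^{\,s}\,\eta^{\,s-j}\big)^k\rho_y^{\,s},
\]
which tends to $0$ as $k\to\infty$ precisely when $C\,c_1^{\,s}\,\eta^{\,s-j}<1$. This holds once $\beta=s-j>\log(Cc_1^{\,s})/\log(1/\eta)$, so setting $\beta(\eta):=2\log(Cc_1^{\,j+1})/\log(1/\eta)$ for small $\eta$ (and $\beta(\eta):=n+1$ otherwise, which is trivial since $\Hm{s}\equiv0$ on $\R^n$ for $s>n$) gives $\beta(\eta)\searrow0$ and $\Hm{j+\beta(\eta)}(A\cap B_{\rho_y}(y))=0$. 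The Lindel\"of property then produces a countable subcover $A\subset\bigcup_iB_{\rho_{y_i}}(y_i)$ and hence $\Hm{j+\beta(\eta)}(A)=0$.

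For (ii) and (iii) the engine is that strong approximation yields a cone condition. For the $sj$ property, at each $y\in A$ there is a fixed direction $V_y\in G(n,j)$ with $B_\rho(y)\cap A\subset(y+V_y)^{\eta\rho}$ for all $\rho\le\rho_y$; letting $\rho\searrow|x-y|$ for $x\in A\cap B_{\rho_y}(y)$ gives
\[
|\pi_{V_y^\perp}(x-y)|=\rmdist(x,\,y+V_y)\le\eta\,|x-y|.
\]
The elementary fact I would record once and reuse is: if a set $E$ has all its points obeying such an inequality with one common axis $V\in G(n,j)$ and a constant $\eta'<1$, then $\pi_V|_E$ is injective and $\pi_{V^\perp}\circ(\pi_V|_E)^{-1}$ is Lipschitz with constant $\eta'/\sqrt{1-\eta'^2}$; extending by McShane/Kirszbraun, $E$ lies in the graph of a Lipschitz function over $V\cong\R^j$. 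Part (iii) is the immediate application: the $s\rho_0 j$ property provides a single subspace $L$, the uniform radius $\rho_0$, and $A$ inside a ball of radius $\rho_0$, so for every pair $x,y\in A$ with $|x-y|<\rho_0$ the cone inequality $|\pi_{L^\perp}(x-y)|\le\eta|x-y|$ holds with the common axis $L$. Covering the bounded set $A$ by $Q=Q(n)$ sets of diameter $<\rho_0$ (intersections of $A$ with a grid of cubes of side $\rho_0/(2\sqrt n)$), every pair inside a single piece is closer than $\rho_0$, so the criterion with $V=L$, $\eta'=\eta$ exhibits each piece as a subset of one Lipschitz graph over $L$; thus $A\subset\bigcup_{k=1}^QG_k$.

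Part (ii) needs one further, genuinely delicate, step, which I expect to be the main obstacle: here both the axis $V_y$ and the radius $\rho_y$ vary with $y$, so cone conditions at different points have different axes and cannot be combined directly. Using compactness of $G(n,j)$ I would fix a small angle $\theta$, a countable $\theta$-net $\{V^{(k)}\}$ of $G(n,j)$, and decompose
\[
A=\bigcup_{k,m,l}A_{k,m,l},\qquad A_{k,m,l}=\{y\in A: d(V_y,V^{(k)})<\theta,\ \rho_y>1/m\}\cap Q_l,
\]
with $\{Q_l\}$ a grid of cubes of side $<1/(2m)$, so each piece has diameter below the local radius and all its axes lie within $\theta$ of $V^{(k)}$. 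The obstacle is that the graph criterion needs a \emph{fixed} axis: one must absorb the angular error, estimating for $x,y$ in a piece
\[
|\pi_{(V^{(k)})^\perp}(x-y)|\le|\pi_{V_y^\perp}(x-y)|+c\,\theta\,|x-y|\le(\eta+c\theta)\,|x-y|,
\]
with $c=c(n,j)$. Choosing $\theta$ so small that $\eta+c\theta<1$ --- possible exactly because $\eta<1$ --- restores a bona fide cone condition with the fixed axis $V^{(k)}$, and the shared criterion makes each of the countably many pieces a Lipschitz graph, giving $A\subset\bigcup_{k=1}^\infty G_k$. Pinning down that $c$ is dimensional and that the resulting Lipschitz constants stay finite is the only real work; everything else is bookkeeping.
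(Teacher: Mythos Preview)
The paper does not actually prove this lemma; it defers to Simon's lecture notes \cite{simon3} (page 63). Your argument is precisely the standard one found there: the iterated slab-covering/box-counting estimate for (i), and the cone-condition-plus-Lipschitz-graph decomposition for (ii) and (iii). The details you supply (re-centring with a dimensional factor $c_1$, the Grassmannian net and grid decomposition to freeze the axis in (ii), the finite grid of side $\rho_0/(2\sqrt n)$ in (iii)) are correct and match the expected treatment, so there is nothing to add.
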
\noindent
Proofs to the above results can be found in Simon \cite{simon3} (stated on page 63). The regularity results following from the above lemma are shown in the following Corollary. 

For the proof of the Corollary we need to consider the projections of sets onto planes. 
\begin{Definition}\label{projection}
By $\pi_i$ we denote the orthogonal projection from $\R^n$ onto its $i$th component. More generally, for an affine plane in $\R^n$, $L$, we denote by $\pi_L$ the orthogonal projection from $\R^n$ to $L$. Further, in $\R^2$ we will use $\R$ to denote the set $\{(x,0):x\in \R\}$.
\end{Definition}

\begin{Corollary}\label{cor1}
The answer to each of the following Definitions is yes: \newline
(1): $w\delta j$, $w\rho \delta j$, $s j$, $s \rho j$, $s \rho_0 j$, $s \delta j$, $s \rho \delta j$ and $s\rho_0\delta j$, \newline 
(2)(a): $s \rho j$, $s \rho_0 j$, $s \rho \delta j$ and $s\rho_0\delta j$, \newline
(2)(b): $s \rho_0 j$ and $s\rho_0\delta j$, \newline
(3): $s j$, $s \rho j$, $s \rho_0 j$, $s \delta j$, $s \rho \delta j$ and $s\rho_0\delta j$.
\end{Corollary}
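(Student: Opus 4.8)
The plan is to reduce every affirmative entry of the Corollary to one of the three parts of Lemma \ref{lem2}, using the inclusions of Proposition \ref{Rs} to move between the twelve properties and using the phrase ``for each $\delta>0$'' to let the approximation parameter tend to $0$. I would organise the write-up by the type of conclusion rather than property by property.

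\emph{Rectifiability (3) and dimension (1) for the six strong properties.} Each of $sj$, $s\rho j$, $s\rho_0 j$, $s\delta j$, $s\rho\delta j$, $s\rho_0\delta j$ places the set $A$, by Proposition \ref{Rs}, inside $R(s,\emptyset,\eta;j)$ for some $\eta\in(0,1)$. Lemma \ref{lem2}(ii) then gives $A\subset\bigcup_{k=1}^{\infty}G_k$ with each $G_k$ the graph of a Lipschitz function over a $j$-dimensional subspace of $\R^n$. Each such graph is a Lipschitz image $f_k(\R^j)$, so $A$ is $j$-rectifiable, settling (3); and since each $G_k$ has $\dim_{\Hm{}}G_k=j$ while Hausdorff dimension is countably stable, $\dim_{\Hm{}}A\leq j$, settling (1) for these six. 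The only facts needed here are the elementary ones that a $j$-dimensional Lipschitz graph has Hausdorff dimension $j$ and is $j$-rectifiable.

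\emph{Dimension (1) for the two weak fine properties.} For $w\rho\delta j$ the set lies in $R(w,\rho,\eta;j)$ for \emph{every} $\eta\in(0,1)$, so Lemma \ref{lem2}(i) gives $\Hm{j+\beta(\eta)}(A)=0$, and letting $\eta\searrow 0$ (using $\beta(\eta)\to 0$) yields $\dim_{\Hm{}}A\leq j$. The property $w\delta j$ is the step I expect to be the genuine obstacle, since it carries no radius uniformity and Lemma \ref{lem2}(i) therefore cannot be invoked directly. My plan is to fix $\eta$, let $\rho_y$ be the radius furnished at level $\eta$, and decompose $A=\bigcup_{m=1}^{\infty}A_m$ with $A_m:=\{\,y\in A:\rho_y\geq 1/m\,\}$. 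For $y\in A_m$ one takes uniformity radius $1/m$: if $x\in B_{1/m}(y)\cap A_m$ and $\rho\leq 1/m$ then $\rho\leq\rho_x$, so the plane at $x$ and scale $\rho$ gives $B_\rho(x)\cap A_m\subset B_\rho(x)\cap A\subset L_{x,\rho}^{\eta\rho}$. Hence $A_m\in R(w,\rho,\eta;j)$, Lemma \ref{lem2}(i) gives $\Hm{j+\beta(\eta)}(A_m)=0$, summation over $m$ gives $\Hm{j+\beta(\eta)}(A)=0$, and $\eta\searrow 0$ finishes it. This regaining of radius uniformity on countably many pieces is the one nonroutine device.

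\emph{Finite measure (2).} For (2)(a) the properties $s\rho j$, $s\rho_0 j$, $s\rho\delta j$, $s\rho_0\delta j$ all put $A$ in $R(s,\rho,\eta;j)$ for some $\eta$. Given $y\in A$, the single plane $L_y$ and radius $\rho_y$ from the defining property show that $A\cap B_{\rho_y}(y)$, as a set contained in a ball of radius $\rho_y$, satisfies the $s\rho_0 j$ property; Lemma \ref{lem2}(iii) then realises it as a \emph{finite} union of Lipschitz graphs, each meeting the bounded region in a set of finite $\Hm{j}$ measure, so $\Hm{j}(A\cap B_{\rho_y}(y))<\infty$, which is weak local $\Hm{j}$-finiteness. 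For (2)(b) the properties $s\rho_0 j$ and $s\rho_0\delta j$ give $A\in R(s,\rho_0,\eta;j)$ directly, so Lemma \ref{lem2}(iii) writes the bounded set $A$ as a finite union of Lipschitz graphs; as a graph over a bounded domain has finite $\Hm{j}$ measure, $\Hm{j}(A)<\infty$, which in particular yields strong local $\Hm{j}$-finiteness. Everything in this paragraph is bookkeeping through Proposition \ref{Rs} once the localisation $A\cap B_{\rho_y}(y)\in R(s,\rho_0,\eta;j)$ is checked.
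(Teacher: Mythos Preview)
Your proposal is correct and follows essentially the same route as the paper: the countable decomposition $A=\bigcup_m A_m$ with $A_m=\{y:\rho_y\geq 1/m\}$ to upgrade $w\delta j$ to $w\rho j$ on pieces, the use of Lemma~\ref{lem2}(ii) for rectifiability and dimension of all strong properties, and the localisation $A\cap B_{\rho_y}(y)\in R(s,\rho_0,\eta;j)$ for the weak local finiteness of $s\rho j$ are exactly the moves the paper makes. The only cosmetic differences are organisational (you group by conclusion type, the paper goes property by property) and that the paper spells out the finite-measure step for $s\rho_0 j$ via an explicit area-formula bound rather than simply asserting that a Lipschitz graph over a bounded domain has finite $\Hm{j}$ measure.
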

\begin{proof}
Let $j\in \mathbb{N}$, $A\in \R(w,\emptyset,\delta;j)$ and $t>0$. Take $\eta>0$ such that $\beta(\eta)\leq t$ where $\beta:\R\rightarrow \R$ is the function given in Lemma $\ref{lem2}$ (i). Since $A\in \R(w,\emptyset,\delta;j)$  
\[\rho_{\eta ,x}:=\frac{1}{2}\sup \{r\in \R:r\in R_x\} >0\]
where, for each $x\in A$, $R_x$ denotes the set of real numbers $\rho_0>0$ such that, for all $\rho\in(0,\rho_0]$, there exists a $j$-dimensional affine plane $L_{x,\rho}$ for which $B_{\rho}(x)\cap A \subset L_{x,\rho}^{\eta\rho}$.

For each $m\in \mathbb{N}$ define $A_m:=\{x\in A:\rho_{\eta , x}\geq m^{-1}\}$. It is clear that $A=\cup_{m\in \mathbb{N}}A_m$. Further, for any $m\in A_m$, since $A_m \subset A$ and $\rho_{\eta,x}$ is bounded below in $A_m$, we see that $A_m\in R(w,\rho,\eta;j)$ with $\rho_y \geq \frac{1}{m}$ for each $y\in A_m$. It follows from Lemma $\ref{lem2}$ (i) that $\Hm{j+t}(A_m)\leq \Hm{j+\beta(\eta)}(A_m)=0$, and thus, since $m$ was arbitrary, that 
$0\leq \Hm{j+t}(A)=\sum_{m\in \mathbb{N}}\Hm{j+t}(A_m)=0$. We deduce that 
$$dimA=\inf\{s\in R:\Hm{s}(A)=0\}\leq j$$ 
and it follows that the answer to $w\delta j$ (1) is yes.

Further, since $R(w,\rho, \delta;j) \subset R(w,\emptyset, \delta;j)$ the answer to $w\rho\delta j$ (1) is also yes.

It is clear that any countable union of Lipschitz graphs over $j$-dimensional affine planes is $j$-dimensional. It thus follows from Lemma $\ref{lem2}$ $(ii)$ and $(iii)$ that the answers to $s j$ (1) and $s \rho_0 j$ (1) are yes. Similarly to the preceding paragraph, by Proposition $\ref{Rs}$ we infer that the answers to $s \rho j$, $s \delta j$, $s \rho \delta j$ and $s\rho_0\delta j$ (1) are yes. 

Suppose now that $A\in R(s,\rho_0,\eta;j)$ for some $\eta \in (0,1)$, that $ x \in \R^n$ and $\rho >0$. From Lemma $\ref{lem2}$ $(iii)$ it follows that
\[A \cap B_{\rho}(x) \subset \bigcup_{k=1}^Qg_k(\pi_{L_k}(B_{\rho}(x)))\]
where $L_k$ are $j$-dimensional affine planes and the functions $g_k$ are the Lipschitz functions over the $L_k$ which, combined, contain $A$. Denoting by Lip$f$ the Lipschitz constant of a Lipschitz function $f$ and setting $M = \max_k \{\hbox{ Lip}g_k \} < \infty$ it follows from the area formula that
\begin{eqnarray}
\Hm{j}(A \cap B_{\rho}(x)) & \leq & \sum_{k=1}^Q\Hm{j}(g_{k}(\pi_{L_k}(B_{\rho}(x))))  \leq QM\omega_j\rho_j. \nonumber
\end{eqnarray}
Since $x$ and $\rho$ were arbitrary, the answer to $s\rho_0 j$ (2) (both (a) and (b)) is yes.

We now note that should $A \in R(s,\rho,\eta;j)$ for some $\eta>0$ and $j\in \N$, then, by definition, for each $y \in A$ there is a $\rho_y > 0$ and an affine space $L_y$ such that for all $x \in B_{\rho_y}(y)$ and all $\rho \in (0,\rho_y]$ 
$B_{\rho}(x) \cap A \subset L_y^{\delta \rho}.$ 
It follows that $B_{\rho_y}(y) \cap A\in R(s,\rho_0,\eta;j)$ and thus, as in the preceding paragraph, that $\Hm{j}(B_{\rho/2}(x) \cap A) < \infty.$ This shows that the answer to $s\rho j$ (2) (a) is yes.

The remaining claims regarding answers to question (2) follow from Proposition $\ref{Rs}$.

For the answers to (3) we observe that Lemma $\ref{lem2}$ (ii) states that any set $A$ satisfying definition $s j$ can be written as a countable union of Lipschitz graphs. It follows from the definition of $j$-rectifiability that the answer to $s j$ (3) is yes. The remaining claims now follow from Proposition $\ref{Rs}$.
\end{proof} 
The remaining positive regularity results all concern the $w \rho_0 \delta j$ property. We show that, in fact, sets in $R(w,\rho_0,\delta;j)$ are subsets of a finite union of affine planes and thus have all the regularity properties being considered. 
\begin{Lemma}\label{propertyiv}
The answers to questions $w\rho_0 \delta j$ (1), (2)(a) and (b), and (3) are yes.
\end{Lemma}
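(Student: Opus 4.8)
The plan is to prove the single, much stronger, statement that every $A\in R(w,\rho_0,\delta;j)$ is contained in a \emph{finite} union of affine $j$-planes, say $A\subset\bigcup_{i=1}^N L_i$; all four answers then follow at once. Indeed, a finite union of $j$-dimensional affine planes has Hausdorff dimension $j$, so $\dim_{\Hm{}}A\leq j$, which gives (1). For any compact $K\subset\R^n$ each $L_i\cap K$ is a bounded subset of a $j$-plane and hence $\Hm{j}(L_i\cap K)<\infty$, so $\Hm{j}(A\cap K)\leq\sum_{i=1}^N\Hm{j}(L_i\cap K)<\infty$; this is exactly strong local $\Hm{j}$-finiteness in the sense of Definition \ref{regproperties}, giving (2)(b) and \emph{a fortiori} (2)(a). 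Finally each $L_i$ is the image of an affine (in particular Lipschitz) map $\R^j\to\R^n$, so $\bigcup_i L_i$, and hence $A$, is $j$-rectifiable, giving (3). Thus the whole lemma reduces to the containment claim.

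To produce the planes I would first upgrade the $\delta$-fine approximation of Definition \ref{defa} into \emph{exact} planar containment at the top scale. Fix $y\in A$ and apply the $w\rho_0\delta j$ property with $\rho=\rho_0$ and $\delta=1/k$: for each $k$ there is a plane $L_k\in G_y(n,j)$ with $B_{\rho_0}(y)\cap A\subset L_k^{\rho_0/k}$. Since the space $G_y(n,j)$ of $j$-planes through $y$ is compact, I pass to a subsequence along which $L_k\to L_y\in G_y(n,j)$. Because the distance from a fixed point to an affine plane through $y$ depends continuously on the plane, for any fixed $z\in B_{\rho_0}(y)\cap A$ we get $d(z,L_y)=\lim_k d(z,L_k)\leq\lim_k \rho_0/k=0$, so $z\in L_y$. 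As the subsequence is chosen before $z$, this holds for every such $z$ simultaneously, and therefore $B_{\rho_0}(y)\cap A\subset L_y$: a single affine $j$-plane through $y$ captures all of $A$ lying within distance $\rho_0$ of $y$.

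It then remains to cover $A$ by finitely many of these top-scale balls. Since $A$ lies in a ball of radius $\rho_0$ it is bounded, hence totally bounded; choosing a maximal $\rho_0$-separated subset $\{y_1,\dots,y_N\}\subset A$ (finite, because the balls $B_{\rho_0/2}(y_i)$ are then pairwise disjoint and contained in a fixed bounded region) yields, by maximality, $A\subset\bigcup_{i=1}^N B_{\rho_0}(y_i)$. Combining with the previous step gives $A=\bigcup_{i=1}^N\bigl(B_{\rho_0}(y_i)\cap A\bigr)\subset\bigcup_{i=1}^N L_{y_i}$, the desired finite union of affine $j$-planes. I expect the only genuine subtlety to lie in the second paragraph: one must guarantee that the limit plane $L_y$ extracted by Grassmannian compactness contains the \emph{entire} slice $B_{\rho_0}(y)\cap A$ at once, which is precisely why the convergent subsequence must be fixed independently of the test point $z$. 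Everything downstream of the containment claim is routine bookkeeping about finite unions of planes.
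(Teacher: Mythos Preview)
Your proposal is correct and follows essentially the same approach as the paper: both arguments use Grassmannian compactness to pass from the $\delta$-fine approximations at scale $\rho_0$ to an exact containing $j$-plane through each point, and then use boundedness of $A$ to extract a finite cover by $\rho_0$-balls, concluding that $A$ lies in a finite union of affine $j$-planes. The only cosmetic difference is that the paper first produces the finite cover (via compactness of $\overline{A}$) and then the limit plane for each piece, whereas you do these two steps in the opposite order.
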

\begin{proof}
Let $j \in \mathbb{N}$, $j\leq n$ and $A\subset \R^n$. Let $\rho_0$ be the radius given in the definition of the $w\rho_0\delta j$ property for which $A$ is a subset of a ball of radius $\rho_0$.

By the definition of the $w\rho_0\delta j$ property, $d(\overline{A})\leq 3\rho_0$ and therefore, $\overline{A}$ is compact. It follows that we can choose finitely many points $\{y_1,..., y_Q\}$ satisfying $A\subset \cup_{i=1}^{Q}B_{\rho_0}(y_i)$.

We show that for each $i\in \{1,...,Q\}$, $A_i:=A \cap B_{\rho_0}(y_i)$ is a subset of an affine plane.

Choose $\{\delta_k\}_{k\in \mathbb{N}}$ with $\delta_k\searrow 0$. By the definition of the $w\rho_0\delta j$ property it follows that for each $k\in \mathbb{N}$ there exists an affine plane $L_k$ containing $y_i$ such that $A_i \subset L_k^{\delta_k\rho_0}$. 

Since $G(n,j)$ is compact with the norm
$$||L_1-L_2||:=d_{\Hm{}}(B_{\rho_0}(0)\cap L_1, B_{\rho_0}(0)\cap L_2)$$
there is a subsequence of $\{k\}$ which we immediately relabel $\{k\}$ and an affine plane $L_i\in G(n,j)$ such that $(L_k-y_i) \rightarrow L_i$.

For any $\e>0$, we deduce, for sufficiently large $k$ dependent on $\e$, that $\delta_k\rho_0<\e$, $A_i\subset L_k^{\delta_k\rho_0} \subset L_i^{\delta_k\rho_0 + \e}+y_i$, and thus $A_i\subset L_k^{\delta_k\rho_0} \subset L_k^{2\e}+y_i$. It follows that $A_i\subset L_i+y_i$.

We infer that $A$ is a subset of a finite union of affine planes, from which it follows that the answers to $w\rho_0\delta j$ (1), (2)(a), (2)(b) and (3) are yes.
\end{proof}

\section{Constructing the counter examples}
By comparing Corollary $\ref{cor1}$ and Lemma $\ref{propertyiv}$ to Theorem $\ref{classification}$ it is clear that all of the positive answers have been proven. We need to show that the remaining questions can be answered in the negative. Again using Proposition $\ref{Rs}$, we need only actually construct four counter examples. These are constructed below and shown to satisfy the required $j$-dimensional approximation properties. That the constructed examples possess the required irregularity properties is then shown in the following section.
\begin{cons}\label{curlyn}
We construct
\[\calN:=\bigcup_{n=1}^{\infty}\R \times\left\{\frac{1}{n}\right\} \subset \R^2,\] and
\[\Lambda := \bigcup_{n=1}^{\infty}\bigcup_{i=1}^2 \hbox{graph}\left(\frac{(-1)^ix^2}{n}\right)\subset \R^2.\]
\end{cons}
It has already been noted that $\calN$ is weakly but not strongly locally $\Hm{1}$ finite. The above easy examples will be shown to satisfy the $s\rho_y \delta 1$ and $s \delta 1$ properties respectively. They will further be shown to be not strongly, weakly and weakly locally $\Hm{1}$ finite respectively. The examples showing that $j$-dimensional approximations can have too great a dimension or not be $j$-rectifiable are, of course, somewhat more complex. In construction $\ref{atheta}$ we construct a class of sets, variants of the Koch curve, from which we will be able to select two specific sets which, as well as satisfying the necessary measure properties, satisfy the $w\rho_0 1$ and $w\rho\delta 1$ properties respectively. 

In constructing the class of sets we use the following definition.
\begin{Definition}\label{def8}
Let 
$$L(a,b) = ((a_1,a_2),(b_1,b_2)):=\{x\in \R^2:x=t(a_1,a_2)+(1-t)(b_1,b_2), t\in [0,1]\}$$ 
be a line in $\R^2$ and $0<\Theta<\pi/4$. We define a $\Theta$-triangular cap on $L$, $T$, to be an isosceles triangle with base line $L$ and base angle $\Theta$. This definition initially leaves two options available. Should $L$ be an edge of a previously constructed triangle, $T_0$, then $T$ is chosen such that $\Hm{2}(T\cap  T_0)>0$. $T$ is otherwise chosen arbitrarily.
\end{Definition}
\begin{Definition}\label{Phi}
Let $\Psi\subset\R^{\N}$ be the set of all non-increasing sequences $\{\theta_n\}_{n=1}^{\infty}$ with 
$0< \theta_n\leq \theta_1<\pi/24$
\end{Definition}
We now define the class of sets from which our remaining counterexamples will come. It is a class of variants of, as mentioned, the Koch curve which allow the angle of the approximating triangles to reduce as the order of approximation increases.
\begin{cons}\label{atheta}
Let $\Theta =\{\theta_i\}_{i=1}^{\infty} \in \Psi$ and $T_0^{\Theta}:=T_{0,1}^{\Theta}$ be a $\theta_1$-triangular cap on $[0,1]\times \{0\}=:A_{0,1}^{\Theta}=:A_0^{\Theta}$. 

Write $A_{1,1}^{\Theta}$ and $A_{2,1}^{\Theta}$ for the two new edges with 
$$A_{1,1}^{\Theta}\cap \{(0,0)\}\not= 0.$$ 
Define $T_{1,1}^{\Theta}$ and $T_{1,2}^{\Theta}$ to be the $\theta_2$-triangular caps on $A_{1,1}^{\Theta}$ and $A_{1,2}^{\Theta}$ respectively so that $T_{1,1}^{\Theta}\cap T_{1,2}^{\Theta} \subset T_0^{\Theta}$.

Suppose now that $2^n$ $\theta_{n+1}$-triangular caps $\{T_{n,i}^{\Theta}\}_{i=1}^{2^n}$ with disjoint interiors have been constructed with $\{(0,0)\}\in T_{n,1}^{\Theta}$ and $T_{n,i}^{\Theta}\cap T_{n,j}^{\Theta}\not=\emptyset$ only if $|i-j|\leq 1$. 

Define $\{A_{n+1,i}^{\Theta}\}_{i=1}^{2^{n+1}}$ to be the $2^{n+1}$ shorter sides of the isosceles triangles $T_{n,i}^{\Theta}$ such that $\{(0,0)\} \in A_{n+1,1}^{\Theta}$ and $A_{n+1,i}^{\Theta}\cap A_{n+1,j}^{\Theta}\not= \emptyset$ only if $|i-j|\leq 1$. Define $\{T_{n+1,i}^{\Theta}\}_{i=1}^{2^{n+1}}$ to be the $2^{n+1}$ $\theta_{n+1}$-triangular caps on $A_{n+1,i}^{\Theta}$ respectively, defined so that $T_{n+1,i}^{\Theta}\subset T_{n,j}^{\Theta}$ for some $j\in \{1,...,2^n\}$.

For each $n\in \mathbb{N}$ define $A_n^{\Theta}:=\cup_{i=1}^{2^{n+1}}A_{n,i}^{\Theta}$ and $T_n^{\Theta}:=\cup_{i=1}^{2^n}T_{n,i}^{\Theta}$.
Finally, define 
$$\At:=\bigcap_{n=1}^{\infty}T_n^{\Theta}.$$
Should the $\Theta$ be clear, as will usually be the case, then the superscripts will be omitted.

We also define
$$\calA(\Psi):=\{\At:\Theta\in \Psi\}.$$
\end{cons}
\begin{Definition}\label{edgespoints}
Let $\Theta \in \Psi$. For each $n\in\mathbb{N}$, $i\in \{1,...,2^n\}$, define $E_{n,i}(\Theta)$ to be the corner points of the triangle $T_{n,i}$. Define 
$$E_n(\Theta):=\bigcup_{i=1}^{2^n}E_{n,i}(\Theta)$$
and
$$E(\Theta)=\bigcup_{n=1}^{\infty}E_n(\Theta).$$
\end{Definition}

\begin{Remark}
For $\theta_n$ constant in $n$, $\At$ is a flattened version of the Koch curve first constructed by Koch in \cite{koch} flattened to the height $(tan\theta_1)/2$. We shall find sets of the above form in $R(w,\rho_0,\eta;1)$ and $R(w,\rho,\delta;j)$; this will be sufficient to complete our classification. 

A set $\At$ for $\Theta$ a constant sequence allows the construction of sets satisfying the $w\rho_0 1$ property for given $\delta>0$ when sufficiently small $\theta$, dependent on $\delta$, is selected. 

For constant sequences $\Theta$, however, $\At\not\in R(w,\rho,\delta;j)$, as the large changes of direction at points in $E(\Theta)$ do not allow appropriate approximation for small $\delta$. It is for this reason that the sequences are allowed to tend toward zero, as doing so allows the sets to become arbitrarily flat at appropriately chosen scales. Even allowing $\theta_n$ to tend toward zero does not completely remove the problem at points in $E(\Theta)$, but the removal of $E(\Theta)$ from $\At$ in an appropriate manner discussed later allows this problem to be circumvented.

A bound on $\theta_1$ is important in the following results as it allows restriction on how quickly sets $\At$ spiral in upon themselves. Although $\pi/24$ may not be optimal, finding an optimal constant to bound $\theta_1$ is not important in this work.

It is clear that in order for a set to be $j$-dimensionally linearly approximable and of dimension greater than $j$, the set must be complicated. We have, however, seen that all sets satisfying the $w\rho\delta j$ property are $j$-dimensional. Combined with the fact, as we shall see, that $\Lambda$ is a counter example to locally $\Hm{j}$-finite measure for the $w\delta 1$ property, it is not immediately clear that an example as complicated as a set in $\calA(\Psi)$ is necessary as a counter example to the locally $\Hm{j}$-finite measure of sets in $R(w,\rho,\delta;1)$. We show that any such counter example must necessarily be complicated in Section 6 in a sense related to rectifiability (See Theorem $\ref{thm52}$).
\end{Remark}
For simplicity in working with the approximating sets, we make the following definition concerning the use of the indices. 
\begin{Definition}\label{inx}
Let $\Theta \in \Psi$. We define $i:\mathbb{N}\times \At \rightarrow \mathbb{N}$ by
$$i(n,x):=\min\{i\in \{1,...,2^n\}:x\in T_{n,i}\}.$$ 
Furthermore, we define 
$$j:\mathbb{N}\times \{1,...,2^n\}\rightarrow \{1,...,2^{n-1}\}$$ 
to be the function defined so that $T_{n,i}\subset T_{n-1,j(n,i)}$ for each $n\in \mathbb{N}$ and $i\in \{1,...,2^n\}$.
\end{Definition}

\begin{Remark}
In general there is only one $i\in \{1,...,2^n\}$ such that $x\in T_{n,i}$. There are however, two such $i$ for each $x\in E(\Theta)$ which makes taking extra measures, here taking the minimum, necessary.
\end{Remark}
Having constructed the sets that will be used to show the irregular properties relevant to Theorem $\ref{classification}$, we next show that the constructed sets do satisfy the necessary Reifenberg properties. That is, we show that $\calN$ satisfies the $s\rho\delta 1$ property therefore the $s\delta 1$ property, that $\Lambda$ satisfies the $s \delta 1$ property and therefore the $s 1$ and $w \delta 1$ properties, that sets in $\calA(\Psi)$ satisfy the $w\rho_0 1$ property and, for appropriate choices of $\Theta$, that $\At$ also satisfies the $w \rho\delta 1$ property.

In the next section we show that the constructed sets posses the necessary measure theoretic properties to be used as counter examples.

\begin{Proposition}\label{easyproperties} For each $\eta \in (0,1)$
\begin{enumerate}[(i)]
\item $\calN\in R(s,\rho,\delta;1)\subset R(s,\rho,\eta;1)$ and
\item $\Lambda \in R(s,\emptyset,\delta;1) \subset R(s,\emptyset,\eta;1)\cap R(w,\emptyset,\delta;1)$.
\end{enumerate}
\end{Proposition}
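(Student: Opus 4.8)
The plan is to verify both membership claims directly from the definitions by exhibiting, at each point $y$, an explicit approximating line together with a radius $\rho_y$; every stated inclusion between the $R(\cdot)$-sets then follows at once from Proposition \ref{Rs}. For $\calN$ the approximating direction will always be the horizontal axis $\R e_1$, and for $\Lambda$ it will be the tangent line to the relevant parabola, with $\R e_1$ itself used at the origin. The one computational fact driving both cases is that each parabola $y=\pm x^2/n$ deviates from a tangent line only quadratically, so that inside a ball of radius $\rho$ the deviation is at most $\rho^2$, which is $<\delta\rho$ as soon as $\rho<\delta$.

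For (i), fix $y=(a,1/n)\in\calN$ and set $L_y:=\R e_1$. The horizontal lines of $\calN$ nearest height $1/n$ sit at vertical distance $1/(n(n+1))$ below and (for $n\geq 2$) $1/((n-1)n)$ above, so choosing $\rho_y$ less than half of this minimal gap forces any point within $2\rho_y$ of $y$ to lie on $\R\times\{1/n\}$. In particular every $x\in B_{\rho_y}(y)\cap\calN$ lies on $\R\times\{1/n\}$, whence $L_y+x=\R\times\{1/n\}$, and for every $\rho\in(0,\rho_y]$ any $p\in B_\rho(x)\cap\calN$ satisfies $|p-y|<\rho+\rho_y\leq 2\rho_y$, so $p\in\R\times\{1/n\}=L_y+x$. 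The approximating inclusion thus holds with deviation $0$, hence for every $\delta>0$ with one and the same $\rho_y$; this is exactly the $s\rho\delta 1$ property, so $\calN\in R(s,\rho,\delta;1)$, and $R(s,\rho,\delta;1)\subset R(s,\rho,\eta;1)$ is immediate from Proposition \ref{Rs}.

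For (ii) I would split $\Lambda$ into the origin and the non-origin points. At $0$ every parabola is tangent to $\R e_1$, and any $(x_1,x_2)\in\Lambda\cap B_\rho(0)$ satisfies $|x_2|=x_1^2/m\leq x_1^2\leq\rho^2$; taking $\rho_0:=\delta$ gives $|x_2|\leq\rho^2\leq\delta\rho$, i.e. $B_\rho(0)\cap\Lambda\subset(\R e_1)^{\delta\rho}$ for all $\rho\in(0,\delta]$. For a non-origin $y$, lying on a single parabola $\gamma$, I would first observe that a small ball meets only $\gamma$: distinct parabolas intersect only at $0$ and accumulate only on the punctured $x$-axis, which is disjoint from $\Lambda$, and $y$ is bounded away from both, so $B_{r_0}(y)\cap\Lambda\subset\gamma$ for some $r_0>0$. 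Writing $\gamma$ as $c(t)=(t,\pm t^2/n)$ with $y=c(a)$, the vertical gap between $c(t)$ and the tangent line $L_y$ at $y$ equals exactly $(t-a)^2/n$; since $c(t)\in B_\rho(y)$ forces $|t-a|\leq|c(t)-y|<\rho$, the perpendicular distance to $L_y$ is at most $(t-a)^2/n\leq\rho^2$. Hence $\rho_y:=\min(r_0,\delta)$ gives $B_\rho(y)\cap\Lambda\subset L_y^{\delta\rho}$ for all $\rho\in(0,\rho_y]$. As $\delta>0$ was arbitrary, $\Lambda$ has the $s\delta 1$ property, i.e. $\Lambda\in R(s,\emptyset,\delta;1)$; the inclusions $R(s,\emptyset,\delta;1)\subset R(s,\emptyset,\eta;1)$ and $R(s,\emptyset,\delta;1)\subset R(w,\emptyset,\delta;1)$ are again read off from Proposition \ref{Rs}.

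The genuinely delicate point is the origin of $\Lambda$, where infinitely many arcs meet and a naive local-curvature bound is useless; the argument succeeds only because all the parabolas share the tangent line $\R e_1$ and vanish to second order there, so the single estimate $|x_2|\leq x_1^2$ controls them all simultaneously. The remaining ingredient is routine: the claim that non-origin points have a neighbourhood meeting a single parabola follows from the intersection-and-accumulation structure above, since only finitely many parabolas enter any ball kept at positive distance from the $x$-axis.
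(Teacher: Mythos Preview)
Your proof is correct and follows essentially the same approach as the paper's: for $\calN$ you use the horizontal direction and a radius smaller than the gap between consecutive lines, and for $\Lambda$ you split into the origin (approximated by $\R e_1$ via the uniform bound $|x_2|\leq x_1^2$) and non-origin points (isolated on a single parabola and approximated by its tangent). The only notable difference is that at non-origin points of $\Lambda$ you give the explicit second-order Taylor bound $(t-a)^2/n\leq\rho^2$, whereas the paper simply invokes differentiability to assert the existence of a suitable $r_{x_1}(\delta_0)$; your version is slightly more concrete but the argument is the same in substance.
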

\begin{proof}
Let $y = (y_1,1/n_y)\in \calN$. Set $L_y:=\R \times \{1/n_y\}$. Then, for all $\delta >0$, $x\in \calN \cap B_{(3(n_y+1))^{-1}}(y)$, and all $\rho \in (0,(3(n_y+1))^{-1}]$, 
\[B_{\rho}(x)\cap \calN \subset L_y = L_y+x-y \subset L_y^{\delta \rho} + x-y\]
so that $\calN\in R(s,\rho,\delta;1)$ proving, together with Proposition $\ref{Rs}$, (i). 

Let $\delta_0>0$. In observing $\Lambda$ there are two types of points to consider, $x=(0,0)$ and otherwise. If $x = (x_1, x_2) \in \Lambda$ but $x \not= (0,0)$, then
$x \in \hbox{ graph}\left(\frac{sgn(x_1)sgn(x_2)  x^2}{n}\right) $
for some $n \in \mathbb{N}$ and for $r_x = \frac{|x|^2\delta_0}{4(n+1)}$
$$B_{r_x}(x) \cap \Lambda  \subset  \hbox{ graph}\left(\frac{sgn(x_1)sgn(x_2)  x^2}{n}\right).$$
Since $x^2$ is differentiable,  there is a tangent line $L_{x}$ to the graph of $sgn(x_1)sgn(x_2)x^2/n$ at $x$ and a radius, 
$r_x \geq r_{x_1} = r_{x_1}(\delta_0)>0$,  such that for all 
\[ y  \in \hbox{ graph}\left(\frac{sgn(x_1)sgn(x_2)x^2}{n}\right) \cap B_{r_{x_1}}(x)\]
\[|\pi_{L_x^{\perp}}(y) - \pi_{L_x^{\perp}}(x)| < \delta_0 |\pi_{L_x}(y) - \pi_{L_x}(x)|\]
so that $B_{r}(x) \cap \Lambda \subset L_{x}^{\delta_0 r}$ for each $r \in (0,r_{x_1}]$. 

If $x = (0,0)$, then by construction, we may choose $L_x = \R$ and note that 
for $|x| < \delta_0$
\[\frac{|x^2|}{n} = \frac{|x||x|}{n} < |x| \delta_0 \]
for each $n \in \mathbb{N}$. Thus, for each $r \in (0,\delta_0]$, $\Lambda \cap B_{r}((0,0)) \subset L_x^{r\delta_0}.$

Noting that in each case $\delta_0>0$ was arbitrarily selected, it follows that $\Lambda \in R(s,\emptyset, \delta;1)$.
\end{proof}

The proof that sets $\At\in \calA(\Psi)$ satisfy particular Reifenberg properties for appropriate $\Theta \in \Psi$ is somewhat more involved. The difficulty lies in the fact that the sets $\At$ begin to spiral in on themselves as the level of approximation via the construction increases. Spiraling is clearly an unwanted property for linear approximation. 

We get around the problem by first proving that any spiraling is not too tight. To do this we need to control the angles between the triangular caps, which we first define.

\begin{Definition}\label{cxvs}
Let $V\in G(2,1)$, $a\in \R^2$ and $0<s<1$. We define
$$C(x,V,s):=\{y\in \R^2:d(y-x,V)<sd(x,y)\}.$$
\end{Definition}

\begin{Definition}\label{parallels}
For lines $L_1, L_2 \subset \R^2$ we write $L_1||L_2$ to denote that the lines are parallel.

For $n\in \mathbb{N}$, $i\in \{1,...,2^n\}$ and $\Theta \in \Psi$ let $G_{n,i}^{\Theta}\in G(2,1)$ be the line satisfying $G_{n,i}^{\Theta}||A_{n,i}^{\Theta}$. For $z\in \R^2$ define $G_{n,i}^{\Theta,z}:=G_{n,i}^{\Theta}+z$. As usual, the superscript $\Theta$ will be suppressed if it is clear from the context. 

Also, suppose that $L$ is a line in $\R^2$ of finite length with midpoint $l$, then we use $O_L$ to denote an orthogonal isometry $O_L:L \rightarrow \R\times \{0\}$ satisfying $O_L(l) = (0,0).$
\end{Definition}

\begin{Definition}\label{def17} 
For $G\in G(2,1)$ let $G:\R^2\rightarrow \R^2$ denote the rotation satisfying $G(G)=\R$,
$$G(z)>0\Leftrightarrow \pi_2(z)>0 \hbox{ for all }z\in G \hbox{ if }G\not= \R, \hbox{ and } G(z)=z \hbox{ if }G=\R.$$

Let $A, B \subset \R^2$, $G\in G(2,1)$ is then said to divide $A$ and $B$ if there is a $g\in G$ such that 
$$\pi_1(G(x))\leq \pi_1(G(g)) \hbox{ for all }x\in A$$
and
$$\pi_1(G(y))\geq \pi_1(G(g)) \hbox{ for all }y\in B.$$
For $A,B\subset \R^2$ we write $G^A_B$ to denote the set of elements of $G(2,1)$ that divide $A$ and $B$.

If $A$ and $B$ are sets that can be divided by some $G\in G(2,1)$ and which have a common point $z$, then the angle between $A$ and $B$, $\psi^A_B$ is defined by 
$$\psi^A_B:=\min\{\theta:C(z,G+z,tan\theta) \supset A\cup B, G\in G^A_B\}.$$
\end{Definition}

We show that a set $\At$ does not spiral too tightly by showing that for each triangular cap, $T_{n,j}$, there is an appropriately large neighbourhood of $T_{n,j}$, $R_{n,j}$, in which $\At\cap R_{n,j}$ meets only $T_{n,j}$ and its direct neighbours.
\begin{Lemma}\label{lem7}
Let $\Theta \in \Psi$. Then 

(i) should two neighbouring trianglular caps, $T_{n,i}$ and $T_{n,i+1}$, be contained in another (necessarily earlier) triangular cap $T_{m,j(i)}$ ($m < n$) then $\psi^{T_n,i}_{T_{n,i+1}} \leq 2 \theta_m \leq 2 \theta_1$ and

(ii) the rectangle 
$$R_{n,i} := \pi_1 \left( O_{A_{n,i}}\left(\cup_{k:|i-k|\leq 1}T_{n,k}\right) \right) \times 
[-2\Hm{1}(A_{n,i}),2\Hm{1}(A_{n,i})]$$ satisfies
$$O_{A_{n,i}}^{-1}(R_{n,i}) \cap \At \subset \bigcup_{j:|i-j|\leq 1}T_{n,j}.$$
\end{Lemma}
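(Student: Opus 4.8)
The plan is to work throughout in coordinates adapted to the relevant triangle, using the isometries $O_L$ of Definition~\ref{parallels}, and to reduce both assertions to statements about the \emph{directions} of the edges $A_{n,k}$ that accumulate at the vertex shared by the two caps. First I would treat (i). The neighbouring caps $T_{n,i},T_{n,i+1}$ sit on the consecutive edges $A_{n,i},A_{n,i+1}$, which meet in a single point $z$. Since $T_{n,i}$ and $T_{n,i+1}$ lie in the two subtrees of the common ancestor $T_{m,j(i)}$ built on its two shorter sides, and the caps of Construction~\ref{atheta} have pairwise disjoint interiors, the only point the two subtrees share is the apex of $T_{m,j(i)}$; hence $z$ is that apex. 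I would then take the dividing direction $G$ to be parallel to the base edge $A_{m,j(i)}$: in the frame $O_{A_{m,j(i)}}$ the caps $T_{n,i}$ and $T_{n,i+1}$ cluster on opposite sides of $z$, so projection onto $G$ separates them (with threshold the $G$-coordinate of $z$), giving $G\in G^{T_{n,i}}_{T_{n,i+1}}$. It then remains to show every point of $T_{n,i}\cup T_{n,i+1}$ makes angle at most $2\theta_m$ with the line through $z$ parallel to $A_{m,j(i)}$, i.e. that both caps lie in $C(z,G+z,\tan 2\theta_m)$.

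The heart of the matter is therefore to bound, against the base direction of $A_{m,j(i)}$, the direction of the edge adjacent to $z$ together with the angular spread of the cap on it. A shorter side of $T_{m,j(i)}$ makes angle $\theta_{m+1}$ with $A_{m,j(i)}$, and passing at level $k$ from an edge to its $z$-adjacent child changes the direction by exactly $\pm\theta_{k+1}$. The crucial point is that the orientation rule of Definition~\ref{def8} (each new cap overlaps its parent triangle) forces these increments to \emph{alternate} in sign from level $m+2$ on; since $\{\theta_k\}$ is non-increasing, the deviation of the $z$-adjacent edge from the base direction telescopes and stays in $[\theta_{m+1},\theta_{m+1}+\theta_{m+2}]$ rather than growing like the possibly divergent tail $\sum_{k>m}\theta_k$. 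Finally, because the cap bulges \emph{towards} the interior of $T_{m,j(i)}$ (again by the overlap rule), its own base angle $\theta_{n+1}$ does not increase the maximal deviation, which therefore remains at most $\theta_{m+1}+\theta_{m+2}\le 2\theta_{m+1}\le 2\theta_m$, as required.

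Verifying this alternation is the main obstacle of the lemma: one must track correctly, through the recursion, on which side of each $z$-adjacent edge the parent triangle — and hence the newly placed cap — lies, and one must invoke $\theta_1<\pi/24$ to guarantee the construction is non-degenerate (so that ``the child adjacent to $z$'' is unambiguous, the caps have disjoint interiors, and the cap indeed bulges inward). I expect the clean way to organise this is an induction on $n-m$, carrying as hypothesis both the side on which the parent sits and the interval in which the running direction lies.

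Finally I would deduce (ii) from (i). Passing to the frame $O_{A_{n,i}}$, so that $A_{n,i}$ is horizontal and centred at the origin, the three caps $T_{n,i-1},T_{n,i},T_{n,i+1}$ fill a horizontal interval, and $R_{n,i}$ is that interval crossed with $[-2\Hm{1}(A_{n,i}),2\Hm{1}(A_{n,i})]$. Suppose a cap $T_{n,k}$ with $|k-i|\ge 2$ met $R_{n,i}$. Following $\At$ along the curve from the three-cap block towards $T_{n,k}$, it must leave the block and later re-enter the vertical strip above it. Here I would apply (i) at each intervening shared vertex: the two caps meeting there lie in a common cone of half-angle $2\theta_1$, so the tangent direction of the polygon turns by at most $4\theta_1<\pi/6$ at every vertex and each cap is nearly parallel to the local base direction. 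This slow, controlled turning shows that once the curve exits the block through a vertical side, its $O_{A_{n,i}}$-height can only move monotonically away, forcing it to leave through the top or bottom edge $|y|=2\Hm{1}(A_{n,i})$ before it could return to the strip — contradicting $T_{n,k}\subset R_{n,i}$. The tallness of $R_{n,i}$ relative to the cap heights (which are of order $\Hm{1}(A_{n,i})\tan\theta_{n+1}$) is exactly what gives this escape argument room, while the looseness of the spiral guaranteed by (i) is what rules out a tightly wound return, yielding $O_{A_{n,i}}^{-1}(R_{n,i})\cap\At\subset\bigcup_{|i-j|\le 1}T_{n,j}$.
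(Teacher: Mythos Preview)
Your plan for (i) works but is more laborious than necessary, and your plan for (ii) has a genuine gap.

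\medskip
\textbf{Part (i).} First a correction: the hypothesis is only that both caps lie in $T_{m,j(i)}$, not that they lie in \emph{different} children of it; so $z$ need not be the apex of $T_{m,j(i)}$. What is true is that $z$ is the apex of the \emph{smallest} common ancestor $T_{n_1,i_1}$, with $m\le n_1<n$. The paper takes exactly this $n_1$, chooses the dividing line $G=G_{n_1,i_1}$ as you do, and then --- here is the simplification you are missing --- observes that $T_{n,i}\subset T_{n_1+1,2i_1-1}$ and $T_{n,i+1}\subset T_{n_1+1,2i_1}$. So it suffices to bound $\psi^{T_{n_1+1,2i_1-1}}_{T_{n_1+1,2i_1}}$, and this is a one-step computation: each side $A_{n_1+1,\cdot}$ makes a fixed angle with the base direction, and the cap on it adds one more $\theta_{n_1+2}$, giving at most $\theta_{n_1}+\theta_{n_1+1}\le 2\theta_m$. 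Your alternating-sign telescoping from level $m$ all the way to level $n$ is therefore unnecessary; containment in the two level-$(n_1{+}1)$ caps does all the work.

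\medskip
\textbf{Part (ii).} Your escape argument asserts that after leaving the three-cap block, ``the $O_{A_{n,i}}$-height can only move monotonically away''. This is not justified, and as stated is false: slow turning bounds the change in direction \emph{per vertex}, but these increments accumulate, so after enough steps the polygon can be heading back toward the strip with only modest vertical displacement. Bounding the turn per vertex by $4\theta_1<\pi/6$ gives no global monotonicity of either coordinate. The paper avoids any such curve-following argument and instead proves (ii) by induction on $n$. The inductive step has two ingredients: first, a claim that for $2\le|i-j|\le 3$ the caps $T_{n,j}$ project (under $\pi_1\circ O_{A_{n,i}}$) outside the projection of the triple $\bigcup_{|i-k|\le 1}T_{n,k}$ --- this is where (i) is actually used, to control the angles between the four nearest neighbours; second, the purely geometric containment $O_{A_{n-1,i_1}}(O_{A_{n,i}}^{-1}(R_{n,i}))\subset R_{n-1,i_1}$, which lets you invoke the inductive hypothesis at level $n-1$ to conclude that $\At\cap O_{A_{n,i}}^{-1}(R_{n,i})$ meets only six level-$n$ caps, three of which are handled by the claim. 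This rectangle-in-rectangle step is where the height $2\Hm{1}(A_{n,i})$ and the bound $\theta_1<\pi/24$ are actually used.
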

\begin{proof}
For (i), let $T_{n,i}$ and $T_{n,i+1}$ be two neighbouring triangular caps with common point $z$. Then, by the 
construction of $\At$, $z = z_{n_1+1,2i_1}$ is the vertex of a triangular cap 
$T_{n_1,i_1}$ for some $m \leq n_1 <n$. (where in general $z_{n,i}:=T_{n,i-1}\cap T_{n,i}$).

Define
$$G_{n_1,i_1}^+(z):=\{\lambda(\pi_{G_{n_1,i_1}z}(z_{n_1+1,2i_1+1})-z):\lambda \geq 0\}+z \hbox{ and}$$
$$G_{n_1,i_1}^-(z):=\overline{G_{n_1,i_1}z\sim G_{n_1,i_1}^+(z)}$$
so that 
$$G_{n_1,i_1}^+(z)\cap G_{n_1,i_1}^-(z)=\{z\},$$
$$\psi^{G_{n_1,i_1}^-(z)}_{A_{n_1+1,2i_1}}=\theta_{n_1}/2, \hbox{ and }\psi^{A_{n_1+1,2i_1-1}}_{G_{n_1,i_1}^+(z)}=\theta_{n_1}/2.$$
We deduce that  $G_{n_1,i_1}$ divides $A_{n_1+1,2i_1}$ and $A_{n_1+1,2i_1-1}$ and that
\[\psi^{A_{n_1+1,2i_1}}_{A_{n_1+1,2i_1-1}} \leq \theta_{n_1}.\]
Since $T_{n_1+1, 2i_1-1}$ and $T_{n_1+1, 2i_1}$ are constructed on the interior of $T_{n_1, i_1}$ with base 
angle $\theta_{n_1+1}$, it follows that 
\[\psi^{T_{n_1+1,2i_1-1}}_{G_{n_1,i_1}^++z} \leq \theta_{n_1}+\theta_{n_1+1} \hbox{ and }
\psi_{T_{n_1+1,2i_1}}^{G_{n_1,i_1}^-+z} \leq \theta_{n_1} + \theta_{n_1+1}\]
and therefore that
\[\psi^{T_{n_1+1,2i_1-1}}_{T_{n_1+1, 2i_1}} \leq \theta_{n_1} + \theta_{n_1+1}.\]
Now, since $\{\theta_n\}$ is a non-increasing sequence, $\theta_{n_1+1}\leq \theta_{n_1} \leq \theta_{m} \leq \theta_1$ 
and thus 
$$\psi^{T_{n_1+1,2i_1-1}}_{T_{n_1+1, 2i_1}} \leq 2\p{m} \leq 2\p{1}.$$
We finally note that $T_{n,i} \subset T_{n_1,i_1}$ and $T_{n,i+1} \subset T_{n_1,i_1+1}$ so that 
$$\psi^{T_{n,i}}_{T_{n,i+1}} \leq 2\p{m} \leq 2\p{1}$$ proving $(i)$. 

To prove (ii) we first prove the claim that if $T_{n,i}$ and $T_{n,j}$ are triangular caps with $2 \leq |i-j| \leq 3$ then 
\[\pi_1\left(O_{A_{n,i}}\left(\bigcup_{k:|i-k|<2}T_{n,k}\right) \right) \cap 
\pi_1(O_{A_{n,i}}(T_{n,j})-\{z_{n,i-1},z_{n,i+2}\}) = \emptyset.\]

We prove the case for $j-i > 0$, the other case following symmetrically. From (i), $\psi^{T_{n,i}}_{T_{n,i+1}} \leq 2\p{1}$
and $\psi^{T_{n,i+1}}_{T_{n,i+2}} \leq 2\p{1}$. We deduce that
$\psi^{T_{n,i}}_{T_{n,i+2}-(z_{n,i+2} - z_{n,i+1})} \leq 4\p{1}.$
Moreover, since $\psi^{T_{n,i+2}}_{T_{n,i+3}} \leq 2\p{1},$
\[\psi^{T_{n,i}}_{T_{n,i+3}-(z_{n,i+3} - z_{n,i+1})} \leq \psi^{T_{n,i}}_{T_{n,i+1}} + \psi^{T_{n,i+1}}_{T_{n,i+2}} + \psi^{T_{n,i+2}}_{T_{n,i+3}} \leq 6\p{1}.\]
It thus follows that $\psi^{A_{n,i}}_{T_{n,i+3} - (z_{n,i+3} - z_{n,i+1})} \leq 6\p{1}$. 

Set $G_0\in G(1,2)$ to be the line minimising $\psi^{A_{n,i}}_{T_{n,i+3} - (z_{n,i+2} - z_{n,i+1})}$ and $G:=G_0+z_{n,i+1}$.
We then have 
$$C(z_{n,i+1},G,6\theta_1)\supset A_{n,i}\cup(T_{n,i+3}-(z_{n,i+3}-z_{n,i+1})),$$ 
and thus
$$O_{A_{n,i}}(C(z_{n,i+1},G,6\theta_1))\subset C((\Hm{1}(A_{n,i})/2,0),\R,12\theta_1).$$
For the appropriate selection of two possible $O_{A_{n,i}}$ it follows that 
$$O_{A_{n,i}}(T_{n,i+3}-(z_{n,i+3}-z_{n,i+1}))\subset C^+((\Hm{1}(A_{n,i})/2,0),\R,12\theta_1).$$
where for 
$x,\theta\in \R$, 
$$C^+(x,\R,\theta):=C(x,\R,\theta)\cap\{y:\pi_1(y)\geq x\}$$
and 
$$C^-(x,\R,\theta):=C(x,\R,\theta)\cap\{y:\pi_1(y)\leq x\}$$
We deduce, for $z_3:=z_{n,i+3}-z_{n,i+1}$, that
$$O_{A_{n,i}}(T_{n,i+3})\subset C^+((\Hm{1}(A_{n,i})/2,0)+z_3,\R+z_3 ,12\theta_1).$$
This being the worse of the two possible cases for $j$, namely $j=i+2$ or $j=1+3$, an identical procedure can be used to show that 
$$O_{A_{n,i}}(T_{n,i+2})\subset C^+((\Hm{1}(A_{n,i})/2,0)+z_2, \R+z_2,12\theta_1)$$
where $z_2:=z_{n,i+2}-z_{n,i+1}$.

Since $8\p{1} < 12\p{1} < \frac{\pi}{2}$, it follows that
\[\pi_1(O_{A_{n,i}}(T_{n,i+2} \cup T_{n,i+3})) \subset [\pi_1(O_{A_{n,i}}(z_{n,i+1})),\infty)\]
and
\begin{equation}\label{geqlem7}
\pi_1(O_{A_{n,i}}(T_{n,i+2} \cup T_{n,i+3})-\{z_{n,i+1},z_{n,i-2}\}) \subset (\pi_1(O_{A_{n,i}}(z_{n,i+1})),\infty).
\end{equation}
A similar argument yields 
$$O_{A_{n,i}}(T_{n,i+1}) \subset C^+((\Hm{1}(A_{n,i})/2,0),\R,4\theta_1)$$
and 
$$O_{A_{n,i}}(T_{n,i-1}) \subset C^-(-(\Hm{1}(A_{n,i})/2,0),\R,4\theta_1),$$
 so that since $4\p{1} < \pi/2-\p{1}$
\begin{eqnarray}
\max\{\pi_1(y):y \in O_{A_{n,i}}(T_{n,i+1})\} & = & \pi_1(O_{A_{n,i}}(z_{n,i+2})) \nonumber \\
& > & \pi_1(O_{A_{n,i}}(z_{n,i+1})) \nonumber \\
& = & \max\{\pi_1(y):y \in O_{A_{n,i}}(T_{n,i})\} \nonumber \\
& = & \pi_1(O_{A_{n,i}}(z_{n,i})) + \Hm{1}(A_{n,\cdot}) \nonumber \\
& \geq & \max\{\pi_1(y):y \in O_{A_{n,i}}(T_{n,i-1})\}. \nonumber 
\end{eqnarray}
Thus clearly
$\pi_{1}\left(\cup_{j:|i-j|<2}O_{A_{n,i}}(T_{n,j})\right) \subset (-\infty,\pi_1(O_{A_{n,i}}(z_{n,i+2}))],$
which, together with ($\ref{geqlem7}$) proves the claim.

We now prove (ii) by induction over $n$. Since there is $1$ triangular cap in $A_0$ and there are 2 triangular caps in $A_1$ the result is obvious for $n=0$ and $n=1$. For $A_2$ there are four triangular caps, meaning that there is 
something to prove. However, we note that for any chosen $i$ every trianglular cap is either in the triple around $i$ or has an index $j$ satisfying $2 \leq |i-j| \leq 3$. Since $\At$ is a subset of the four triangular caps, the required result follows directly from the above proved claim.

We now prove the inductive step. We suppose that the hypothesis holds for all triples 
$\{T_{p,i-1}, T_{p,i},T_{p,i+1}\}$ for a given $p \in \mathbb{N}$ and show that it holds for an arbitrary triple 
$\{T_{p+1,i-1}, T_{p+1,i},T_{p+1,i+1}\}$. 
We set 
\[\mathcal{T} := \cup \{T_{p+1,i-1}, T_{p+1,i},T_{p+1,i+1}\}.\]
Note first that 
\[\bigcup_{j:|i-j|<2}T_{p+1,j} \subset \bigcup_{j:|i_1-j|<2}T_{p,j},\]
where $i_1=j(p+1,i)$, so that the triple is in fact a subset of a triple in the $p$th construction level. This triple in the $p$th construction level, by construction, contains exactly $6$ trianglular caps in the ($p+1$)th construction level, namely 
$\{T_{p+1,j}\}_{j=2i_1-3}^{2i_1+2}$ with $T_{p+1,i} \in \{T_{p+1,2i_1-1},T_{p+1, 2i_1}\}$. We also have by the inductive 
hypothesis that 
\[\At \cap R_{p,i_1} \subset \bigcup_{j=2i_1-3}^{2i_1+2}T_{p+1,j}.\]
It follows that 
\[\At \cap R_{p+1,i} \cap R_{p,i_1} \subset \bigcup_{j=2i_1-3}^{2i_1+2}T_{p+1,j}.\]
Now, since $i \in \{2i_1-1, 2i_1\}$ we see that for all $j \in \{2i_1-3, ..., 2i_1+2\}$, either $|i-j| < 2$ 
or $2 \leq |i-j| \leq 3$. From the claim above it follows that for each $j$ such that 
$2 \leq |i-j| \leq 3$, $(T_{p+1, j} \sim \mathcal{T}) \cap R_{p+1,i} = \emptyset$. Thus
$\At \cap R_{p+1,i} \cap R_{p,i_1} \subset \mathcal{T}.$
The proof is thus complete in the case that $R_{p+1,i} \subset R_{p,i_1}$, as in this case 
\[\At \cap R_{p+1,i} = \At \cap R_{p+1,i} \cap R_{p,i_1} \subset \mathcal{T}.\]
It is clearly sufficient to show that $O_{A_{p,i_1}}(R_{p+1,i}) \subset O_{A_{p,i_1}}(R_{p,i_1})$. For $p\in \mathbb{N}$ and $j\in \{1,...,2^p\}$, define $\Hm{}_{p,j}:=\Hm{1}(A_{p,j})$. Without loss of generality we may assume that 
\begin{eqnarray}
O_{A_{p,i_1}}(T_{p+1,i}) & \subset & \triangle \left((0,0),\left(-\frac{\Hm{}_{p,j}}{2},0\right),\left(0,\frac{tan\theta_1 \Hm{}_{p,j}}{2}\right)\right) \nonumber \\
& \subset &\triangle \left((0,0),\left(-\frac{\Hm{}_{p,j}}{2},0\right),\left(0,\frac{\Hm{}_{p,j}}{100}\right)\right)\label{triincl}
\end{eqnarray}
where $\triangle(a,b,c)$ denotes the triangle in $\R^2$ with vertices $a,b$ and $c$. 
The other cases follow with symmetric arguments.

By the selection of $\Theta$, the construction of $\At$ and ($\ref{triincl}$), we see that
\begin{equation}\label{pi1rpi}
\pi_1\left(\bigcup_{j:|i-j|<2}O_{A_{p,i_1}}(T_{p+1,j})\right)\subset [-1.5,1]\Hm{}_{p+1,i}.
\end{equation}
Set $\eta:=2\cos(3\pi/8)\Hm{}_{p+1,i}$. Since $2\theta_{p+1}\leq 2\theta_1<\pi/8$, it follows from ($\ref{pi1rpi}$) that
$$\pi_1(O_{A_{p,i_1}}(R_{p+1,i}))\subset[-1.5-\eta, 1+\eta]\Hm{}_{p+1,i} \subset [-2.3,1.8]\Hm{}_{p+1,i}.$$
By ($\ref{triincl}$) and since $\pi_2(O_{A_{p,i_1}}(T_{p+1,i}))\leq 2sin\theta_1 \Hm{}_{p+1,i} \leq 0.15\Hm{}_{p+1,i}$ we also have
$$\pi_2(O_{A_{n,i_1}}(R_{p+1,i}))\subset [-2,2.15]\Hm{}_{p+1,i}$$
and therefore
\begin{equation}\label{rpiincl}
O_{A_{n,i_1}}(R_{p+1,i})\subset [-1.15,0.9]\Hm{}_{p,i_1}\times [-1,1.1]\Hm{}_{p,i_1}.
\end{equation}
Using again $\theta_p\leq \theta_1 \leq \pi/8$ we calculate $|\pi_1(O_{A_{p,i_1}}(T_{p,j}))|>0.9\Hm{}_{p,j}$ for $j=i_1\pm 1$, and therefore that
$$\pi_1(O_{A_{p,i_1}}(R_{p,i_1}))\supset [-1.4,1.4]\Hm{}_{p,i_1}.$$
Hence, by ($\ref{rpiincl}$)
$$O_{A_{p,i_1}}(R_{p,i_1})\supset [-1.4,1.4]\Hm{}_{p,i_1} \times [-2,2]\Hm{}_{p,i_1}\supset O_{A_{p,i_1}}(R_{p+1,i}),$$
completing the proof of (ii). 
\end{proof}
To appropriately apply the restrictions on spiraling we also require estimates on the lengths of the line segments $A_{n,i}^{\Theta}$.
\begin{Lemma}\label{lem5} 
Let $\Theta=\{\theta_n\}_{n=1}^{\infty}\in \Psi$ and $m\in \N$. Then
$$\Hm{1}(A_{m,i}^{\Theta})=2^{-m}\prod_{i=1}^m(cos\theta_i)^{-1}$$
and thus 
$$\Hm{1}(A_m^{\Theta})=\prod_{i=1}^m(cos\theta_i)^{-1}.$$
\end{Lemma}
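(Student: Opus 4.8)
The plan is to prove both identities by a single induction on $m$, powered by one elementary fact about isosceles triangles. First I would record that fact: if $T$ is a $\theta$-triangular cap on a base segment $L$ of length $\ell$, then each of its two shorter sides (the legs, which by Construction $\ref{atheta}$ become the new edges at the next level) has length $\ell/(2\cos\theta)$. This is immediate upon dropping the altitude from the apex to the midpoint of $L$: the half-base has length $\ell/2$ and is adjacent to the base angle $\theta$ in the right triangle whose hypotenuse is the leg, so the leg has length $(\ell/2)/\cos\theta$. Since $\theta\leq\theta_1<\pi/24<\pi/3$ throughout, we have $2\cos\theta>1$, which confirms these really are the shorter sides.

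Next I would set up the induction for the first identity. For the base case $m=1$, the single edge $A_0^{\Theta}=[0,1]\times\{0\}$ has $\Hm{1}(A_0^{\Theta})=1$, and $A_{1,1}^{\Theta},A_{1,2}^{\Theta}$ are the two legs of the $\theta_1$-triangular cap $T_0^{\Theta}$ on $A_0^{\Theta}$; by the fact above each has length $1/(2\cos\theta_1)=2^{-1}(\cos\theta_1)^{-1}$, as claimed. For the inductive step I would invoke that every level-$(m+1)$ edge $A_{m+1,i}^{\Theta}$ is one of the two legs of a $\theta_{m+1}$-triangular cap $T_{m,j}^{\Theta}$ erected on some level-$m$ edge $A_{m,j}^{\Theta}$. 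Combining the isosceles-triangle fact with the inductive hypothesis $\Hm{1}(A_{m,j}^{\Theta})=2^{-m}\prod_{k=1}^m(\cos\theta_k)^{-1}$ gives $\Hm{1}(A_{m+1,i}^{\Theta})=\Hm{1}(A_{m,j}^{\Theta})/(2\cos\theta_{m+1})=2^{-(m+1)}\prod_{k=1}^{m+1}(\cos\theta_k)^{-1}$, a value independent of $i$. This closes the induction and yields the first formula.

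Finally, the second identity follows by summation. At level $m$ there are exactly $2^m$ edges $A_{m,i}^{\Theta}$ (each construction level doubles the edge count, starting from the one edge $A_0^{\Theta}$), all of the common length just computed, and consecutive edges of the polygonal set $A_m^{\Theta}=\bigcup_i A_{m,i}^{\Theta}$ meet only in single vertices, which carry zero $\Hm{1}$ measure. Hence $\Hm{1}$ is additive over the union and $\Hm{1}(A_m^{\Theta})=2^m\cdot 2^{-m}\prod_{k=1}^m(\cos\theta_k)^{-1}=\prod_{k=1}^m(\cos\theta_k)^{-1}$.

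I expect no serious obstacle here: the entire mathematical content is the one trigonometric computation of the leg length. The only point demanding genuine care is the index bookkeeping — verifying that all level-$m$ edges share the common length (so that the sum collapses to a clean multiplication by $2^m$) and that the cap erected on a level-$m$ edge carries base angle $\theta_{m+1}$, which is exactly what makes the product in the inductive step run up to index $m+1$ and thereby matches the claimed shift of indices.
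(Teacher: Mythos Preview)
Your proposal is correct and matches the paper's own proof essentially step for step: both argue by induction on $m$, using the elementary fact that the legs of a $\theta$-triangular cap on a base of length $\ell$ have length $\ell/(2\cos\theta)$, and then obtain the second identity by summing the $2^m$ equal edge lengths. Your write-up is in fact slightly more explicit than the paper's, spelling out the altitude argument and the additivity over the union, but the mathematical content is identical.
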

\begin{proof}
By the construction of the $\theta_1$-triangular cap on $A_0$, $T_{0,1}$, we see that
$$\Hm{1}(A_{1,i})=\frac{\Hm{1}(A_0)}{2}(cos\theta_1)^{-1}=2^{-1}(cos\theta_1)^{-1}$$
as required.

Similarly, assuming
$$\Hm{1}(A_{m,i})=2^{-m}\prod_{i=1}^m(cos\theta_i)^{-1}$$
for some $m$ and $i$, we see by the construction of the $\theta_{m+1}$-triangular cap on $A_{m,i}$, $T_{m,i}$ that
$$\Hm{1}(A_{m+1,j})=\frac{\Hm{1}(A_{m,i})}{2}(cos\theta_{m+1})^{-1}=2^{-m-1}\prod_{i=1}^{m+1}(cos\theta_i)^{-1},$$
for $j\in \{2i-1,2i\}$, so that the result now follows by induction.
\end{proof}

With the above estimates on the measure of the $A_{n,i}$ and on the rate of spiraling, we can now present a general result describing when a subset of a set $\At$ satisfies a rather weak Reifenberg property. 
\begin{Lemma}\label{generalreif}
Let $\eta>0$ and $\Theta \in \Psi$. Suppose that for all $x\in A\subset \At$ there is an $r_x>0$ such that 
$$n_x:=\max\{1,\min\{n : B_{3r_x}(x)\cap E_{n+1}(\Theta)\not=\emptyset\}\}$$
satisfies 
$$5\theta_{n_x}<\psi_{\eta}:=tan^{-1}(\eta).$$
Then $A\in R(w,\rho,\eta;1)$.
\end{Lemma}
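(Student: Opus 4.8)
The plan is to verify the $w\rho 1$ property straight from Definition \ref{defa}(ii): for each $y \in A$ I must produce a radius $\rho_y > 0$ so that every $x \in B_{\rho_y}(y) \cap A$ and every $\rho \in (0,\rho_y]$ admit a line $L_{x,\rho} \in G_x(1,n)$ with $B_\rho(x) \cap A \subset L_{x,\rho}^{\eta\rho}$. Fix $y$ and set $\rho_y := r_y$, the radius supplied by the hypothesis. The first observation is the containment $B_\rho(x) \subset B_{2r_y}(y) \subset B_{3r_y}(y)$ whenever $x \in B_{\rho_y}(y)$ and $\rho \le \rho_y$; this is what transfers the flatness encoded by $n_y$ from the single ball $B_{3r_y}(y)$ to every small ball $B_\rho(x)$ we must treat. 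By the definition of $n_y$, the ball $B_{3r_y}(y)$ meets $E_{n_y+1}(\Theta)$ but, when $n_y>1$, misses $E_{n_y}(\Theta)$; since $E_1(\Theta) \subset E_2(\Theta) \subset \cdots$, it follows that $B_{3r_y}(y)$ contains no corner of any level $\le n_y$. The hypothesis, used at the centre $y$ itself, enters only in the single form $5\theta_{n_y} < \psi_\eta$.

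Next I would localise. As $y \in \At$ lies over a level-$n_y$ edge and the endpoints of that edge are level-$n_y$ corners lying outside $B_{3r_y}(y)$, the radius $3r_y$ cannot exceed the edge length $\Hm{1}(A_{n_y,i_0})$ computed in Lemma \ref{lem5}, where $T_{n_y,i_0}$ is the cap containing $y$. Feeding this into the non-tight-spiralling estimate of Lemma \ref{lem7}(ii) (the rectangle $R_{n_y,i_0}$ then comfortably contains $B_{3r_y}(y)$) shows that $B_{3r_y}(y) \cap \At$ meets only $T_{n_y,i_0}$ and its two neighbours; moreover the factor $3$ provides the buffer guaranteeing that a neighbouring cap, whose sole access to the central cap is through the excluded corner $z_{n_y,\cdot}$, cannot intrude into the smaller ball $B_{2r_y}(y)$. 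Hence $B_\rho(x) \cap \At$ is a piece of the Koch curve over a single level-$n_y$ edge. I then pass to the scale of $\rho$: letting $N \ge n_y$ be the level with $\Hm{1}(A_{N,\cdot})$ comparable to $\rho$, the same localisation at level $N$ confines $B_\rho(x)\cap\At$ to $T_{N,k}$ and its neighbours, where $k := i(N,x)$.

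The heart of the matter is the angular estimate. Every junction of caps falling inside $B_\rho(x)$ is a corner which, by the level count above, is born at level $\ge n_y+1$, i.e.\ it is the apex of a level-$(\ge n_y)$ triangle; consequently the nearest common ancestor of the two caps meeting there has level $\ge n_y$, and Lemma \ref{lem7}(i) bounds the turning across it by $2\theta_{n_y}$, using that $\{\theta_n\}$ is non-increasing. Summing the at most two such turns across the neighbouring triple and adding the correction required to replace the ambient edge direction $G_{N,k}$ by the parallel line $L_{x,\rho} := G_{N,k} + x$ through $x$, the total angular deviation of $B_\rho(x)\cap\At$ from $L_{x,\rho}$ is at most $5\theta_{n_y}$. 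For any $p \in B_\rho(x) \cap A$ the cone estimates underlying Lemma \ref{lem7} then give $\mathrm{dist}(p, L_{x,\rho}) \le \sin(5\theta_{n_y})\,|p-x| \le \tan(5\theta_{n_y})\,\rho < \tan(\psi_\eta)\,\rho = \eta\rho$, the strict inequality coming from $5\theta_{n_y} < \psi_\eta$ and the monotonicity of $\tan$ on $[0,\pi/2)$. This yields $B_\rho(x) \cap A \subset L_{x,\rho}^{\eta\rho}$ and hence $A \in R(w,\rho,\eta;1)$.

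I expect the main obstacle to be the geometric bookkeeping of the middle two steps rather than any conceptual difficulty: confirming that the excluded corner truly keeps a neighbouring cap out of $B_{2r_y}(y)$ (this is where the factor $3$ and the bound $\theta_1 < \pi/24$ are spent), and accounting honestly for the base-point shift from the fixed edge direction to the line through $x$, so that the accumulated turning is genuinely $\le 5\theta_{n_y}$. The borderline case $n_y = 1$ forced by the outer $\max$, in which $B_{3r_y}(y)$ may already meet a coarse corner, must be absorbed separately via the global bound $\theta_1 < \pi/24$ on the finitely many coarsest caps; the estimate $5\theta_1 < \psi_\eta$ is precisely what makes that case go through.
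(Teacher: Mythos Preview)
Your approach is essentially the paper's: set $\rho_y=r_y$, pass to a level $N$ with $\Hm{1}(A_{N,\cdot})$ comparable to $\rho$, invoke Lemma~\ref{lem7}(ii) to confine $B_\rho(x)\cap\At$ to a few consecutive level-$N$ caps, take $L_{x,\rho}:=G_{N,k}+x$, and bound the angular deviation via Lemma~\ref{lem7}(i) together with the key observation that every junction lying in $B_\rho(x)\subset B_{3r_y}(y)$ is a corner born at level $\ge n_y+1$ (coarser corners being excluded from $B_{3r_y}(y)$ by the definition of $n_y$). The paper records the resulting bound as $2\theta_{n_x}+2\theta_{n_1}<\psi_\eta$; your $5\theta_{n_y}$ is the same estimate in different packaging.

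One organisational difference: you insert an intermediate step, first confining $B_\rho(x)\cap\At$ to a \emph{single} level-$n_y$ cap before descending to level $N$. The paper skips this and works directly at level $n_1\ge n_x+2$ with five caps (indices within $2$ of the centre, your ``at most two turns'' then being the two junctions from the central cap to an outermost one). Your extra step is justifiable---the dividing line in Lemma~\ref{lem7}(i) and $\cos(2\theta_1)>2/3$ give exactly the buffer you describe---but it is not actually needed: the level count that feeds Lemma~\ref{lem7}(i) comes straight from $B_\rho(x)\subset B_{3r_y}(y)$ and the definition of $n_y$, independently of the single-cap confinement. Dropping it streamlines the argument to the paper's and makes your separate treatment of the case $n_y=1$ unnecessary, since $5\theta_{n_x}<\psi_\eta$ is used uniformly.
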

\begin{proof}
Let $x\in A$. We show that for each $y\in B_{r_x}(x)$ and $r\in (0,r_x]$ there is an $L_{y,r} \in G_y(2,1)$ such that 
$$\At\cap B_r(y)\subset L_{y,r}^{\eta r}.$$
Since $\theta_n$ is monotonically non-increasing
\begin{equation}\label{gr1}
\theta_n < 2\theta_{n-1}+3\theta_{n-2}<\psi_{\eta} \ \ \hbox{ for all } \ \ n\geq n_x+2.
\end{equation} 
Furthermore, for each $y\in B_{r_x}(x)\cap A$ and $r\in (0,r_x]$ there is, by Lemma $\ref{lem5}$ and the selection of $n_x$, an $n_1\geq n_x+2$ satisfying 
\begin{equation}\label{gr2}
\Hm{1}(A_{n_1,j})\in [r/2,r)
\end{equation}
for each $j\in \{1,...,2^{n_1}\}$. We now choose 
$$L_{y,r}:=G_{n_1,i(n_1,y)}+y.$$
By ($\ref{gr2}$) and Lemma $\ref{lem7}$ we see that 
\begin{equation}\label{gr3}
B_r(y)\cap T_{n_1,j}=\emptyset \ \ \hbox{ for } |j-i(n_1,y)|>2.
\end{equation}
From ($\ref{gr1}$) and ($\ref{gr2}$) and the selection of $n_1$ we infer that the vertical height of $T_{n_1,i(n_1,y)}$ is smaller than $r\eta$, thus
\begin{equation}\label{gr4}
T_{n_1,i(n_1,y)}\subset L_{y,r}^{\eta r}
\end{equation}
and, in particular,
\begin{equation}\label{gr5}
\{z_{n_1,i(n_1,y)},z_{n_1,i(n_1,y)+1}\}\subset C(y,L_{y,r},\psi_{\eta}).
\end{equation}
Moreover, from Lemma $\ref{lem7}$, ($\ref{gr1}$) and the selection of $n_1$ we deduce
\begin{equation}\label{gr6}
\psi^{T_{n_1,i}-(z_{n_1,i}-z_{n_1,i(n_1,y)+1})}_{T_{n_1,i(n_1,y)}}<2\phi_{n_x} + 2\phi_{n_1}<\psi_{\eta} 
\end{equation}
for $i\in \{i(n_1,y)+1,i(n_1,y)+2\}$ and
\begin{equation}\label{gr7}
\psi^{T_{n_1,i(n_1,y)}}_{T_{n_1,i+(z_{n_1,i(n_1,y)}-z_{n_1,i+1})}} <2\phi_{n_x} + 2\phi_{n_1}<\psi_{\eta}
\end{equation}
for $i\in \{i(n_1,y)-2,i(n_1,y)-1\}$. From ($\ref{gr5}$), ($\ref{gr6}$) and ($\ref{gr7}$) it follows that 
\begin{equation}\label{gr8}
T_{n_1,i}\subset C(y,L_{y,r},\psi_{\eta}) \ \ \hbox{ for } \ \ |i-i(n_1,y)|<2.
\end{equation}
Since
$$C(y,L_{y,r}, \psi_{\eta})\cap B_r(y) \subset L_{y,r}^{\eta r}$$
it follows from ($\ref{gr3}$), ($\ref{gr4}$) and ($\ref{gr8}$) that 
\begin{eqnarray}
\At\cap B_r(y) & \subset & \left(\bigcup_{|i-i(n_1,y)|}T_{n_1,i}\right)\cap B_r(y) 
 \subset  (T_{n_1,i(n_1,y)}\cup C(y,L_{y,r},\psi_{\eta}))\cap B_r(y) 
 \subset  L_{y,r}^{\eta r}. \nonumber
\end{eqnarray}
\end{proof}

For the purposes of our classification we need sets $\At$ satisfying differing Reifenberg and measure properties. Suitable candidates can be chosen via appropriate selection of $\Theta$.
\begin{Definition}\label{psic}
Define 
$$\Psi_c:=\{\Theta=\{\theta_n\}_{n=1}^{\infty}\in \Psi:\theta_n=\theta_1 \hbox{ for all } n\in \mathbb{N}\},$$
$$\Psi_{0}:=\{\Theta\in \Psi:\lim_{n\rightarrow \infty}\theta_n =0\},$$
$$\calA(\Psi_c,\theta):=\{\At:\Theta=\{\theta_n\}_{n\in \N}\in \Psi_c:\theta_1\leq\theta\},$$
$$\calA(\Psi_c):=\bigcup_{\theta\in (0,\pi/24)}\calA(\Psi_c,\theta), \hbox{ and } \calA(\Psi_0):=\{\At:\Theta \in \Psi_0\}.$$

\end{Definition}
\begin{Remark}
We show that the sets in $\calA(\Psi_0)$ and $\calA(\Psi_c)$ are the elements of $\calA(\Psi)$ that satisfy the appropriate Reifenberg properties.
\end{Remark}

\begin{Lemma}\label{lem10} 
\begin{enumerate}[(i)]
\item $\{A=\At\sim E(\Theta):\Theta \in \Psi_0\}\subset R(w,\rho,\delta;1)$.
\item 
There is a monotone increasing function $\eta:\R\rightarrow \R$ such that, for any $j\in \N$ and $\beta>0$,
$$\calA(\Psi_c,\eta(\beta))\subset R(w,\rho,\beta;1).$$
\end{enumerate}
\end{Lemma}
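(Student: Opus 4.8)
The plan is to deduce both parts directly from Lemma \ref{generalreif}, so the whole task reduces to checking, in each case, that for every relevant point $x$ one can pick a radius $r_x>0$ that pushes the index $n_x$ into a range where $5\theta_{n_x}$ is small. Since $A\subset\At$, the inclusion $\At\cap B_r(y)\subset L_{y,r}^{\eta r}$ provided by that lemma restricts at once to $A$, so it is enough to verify the single hypothesis $5\theta_{n_x}<\tan^{-1}(\eta)$.

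For part (ii) the verification is immediate because the sequence is constant. Given $\beta>0$, I would set, for definiteness, $\eta(\beta):=\tfrac17\tan^{-1}(\min\{\beta,1\})$; this is monotone increasing and satisfies $0<\eta(\beta)<\pi/24$ together with $5\eta(\beta)<\tan^{-1}(\beta)$ for every $\beta>0$. Let $\At\in\calA(\Psi_c,\eta(\beta))$, so $\Theta=\{\theta_n\}$ is constant with $\theta_1\le\eta(\beta)$. For any $x\in\At$ choose any $r_x>0$; since by Lemma \ref{lem5} the lengths $\Hm{1}(A_{n,i})=2^{-n}\prod_{k=1}^n(\cos\theta_k)^{-1}$ tend to $0$, the caps $T_{n,i(n,x)}\ni x$ have diameters tending to $0$, so their vertices (which lie in $E_n(\Theta)$) accumulate at $x$ and $B_{3r_x}(x)$ meets $E_{n+1}(\Theta)$ for all large $n$; thus $n_x$ is well defined. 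As $\theta_{n_x}=\theta_1\le\eta(\beta)$ we obtain $5\theta_{n_x}\le5\eta(\beta)<\tan^{-1}(\beta)$, and Lemma \ref{generalreif} gives $\At\in R(w,\rho,\beta;1)$.

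For part (i) I fix $\delta'>0$ and produce the required radii for $A=\At\sim E(\Theta)$ with $\Theta\in\Psi_0$. Because $\theta_n\searrow0$, there is an $N=N(\delta')\in\N$ with $5\theta_N<\tan^{-1}(\delta')$, hence $5\theta_n<\tan^{-1}(\delta')$ for all $n\ge N$. The set $E_1(\Theta)\cup\cdots\cup E_N(\Theta)$ is finite, hence closed, and does not contain $x$ since $x\in A$ means $x\notin E(\Theta)$; therefore $d\big(x,\bigcup_{m\le N}E_m(\Theta)\big)>0$. Choosing $r_x<\tfrac13 d\big(x,\bigcup_{m\le N}E_m(\Theta)\big)$ forces $B_{3r_x}(x)\cap E_m(\Theta)=\emptyset$ for every $m\le N$, whence $n_x\ge N$ and so $5\theta_{n_x}\le5\theta_N<\tan^{-1}(\delta')$. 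Lemma \ref{generalreif} then yields $A\in R(w,\rho,\delta';1)$, and since $\delta'>0$ was arbitrary, $A\in R(w,\rho,\delta;1)$, which is the fine property claimed in (i).

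The routine facts (that $n_x$ is finite and that the distance is positive) are straightforward, and the conceptual heart is the single step in part (i) where the removal of $E(\Theta)$ enters. This is precisely the obstruction flagged in the Remark after Construction \ref{atheta}: at an edge point $x\in E_m(\Theta)$ the set $\At$ turns through a fixed angle comparable to $\theta_m$, so no choice of $r_x$ can make the local approximation finer than that fixed angle and the fine property fails. Excising $E(\Theta)$ guarantees that $x$ lies at positive distance from the finitely many low-level vertices, which lets $r_x$ drive $n_x$ past the threshold $N$ at which $\theta_{n_x}$ already sits below $\tfrac15\tan^{-1}(\delta')$; the hypothesis $\Theta\in\Psi_0$ is exactly what makes such a threshold exist. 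I expect this interplay — positive distance to finitely many vertices combined with $\theta_n\to0$ — to be the only genuinely delicate point, the remainder being bookkeeping on top of Lemma \ref{generalreif}.
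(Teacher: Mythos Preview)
Your proof is correct and follows essentially the same route as the paper: both parts are reduced to Lemma~\ref{generalreif} by choosing $r_x$ appropriately, with the paper taking $\eta(\beta)=\tfrac15\tan^{-1}(\beta)$ in (ii) and $r_x=\tfrac13 d(x,E_{n_0+1}(\Theta))$ in (i). Your choices differ only cosmetically (a smaller $\eta$, and the equivalent set $\bigcup_{m\le N}E_m(\Theta)=E_N(\Theta)$ since the $E_m$ are nested), and you are if anything slightly more careful in verifying that $n_x$ is finite.
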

\begin{proof}
For (i): let $\beta>0$ and $\cl{A}\in\{A=\At\sim E(\Theta):\Theta \in \Psi_0\} $. Since $\Theta\in \Psi_0$ there is an $n_0\in \N$ such that $5\theta_n<\psi_{\beta}:=tan^{-1}(\beta)$ for each $n \geq n_0$.

Since $\cl{A}\cap E_{n_0+1}(\Theta) =\emptyset$ and $E_{n_0+1}(\Theta)$ is closed, $d(x,E_{n_0+1}(\Theta))>0$ for each $x\in \cl{A}$. For each $x\in \cl{A}$ define, 
$$r_x:=\frac{d(x,E_{n_0+1}(\Theta))}{3}.$$
By selecting $r_x$ in this manner the hypotheses of Lemma $\ref{generalreif}$ are satisfied and we deduce that 
$$\cl{A}\in R(w,\rho,\beta;1).$$ As $\beta>0$ was arbitrary (i) follows.

For (ii): set $\eta(\beta):=(tan^{-1}(\beta))/5.$

If $\Theta \in \Psi$ now satisfies $\theta_1<\eta$, then the hypotheses of Lemma $\ref{generalreif}$ are satisfied for $A:=\At$ by choosing, for any $x\in \At$, any $r_x>0$. The result follows.
\end{proof}

\section{Irregular Properties}
Having shown that $\calN$, $\Lambda$, and particular elements of $\calA(\Theta)$ satisfy the required linear approximation properties, it remains to show that they each have the necessary measure properties to be counter examples to the appropriate questions.

It is easily shown that $\calN$, respectively $\Lambda$, are not strongly, respectively weakly, locally $\Hm{1}$-finite. For $A\in \calA(\Psi_c)$, $dim_{\Hm{}}A >1$ follows from its properties as a self-similar set (being a flattened Koch curve) which we show shortly. That such a set is not of locally finite Hausdorff measure and not rectifiable follows directly. For $A\not\in \calA(\Psi_c)$, $A$ is not actually self-similar and showing that $A$ is neither locally $\Hm{1}$-finite nor $1$-rectifiable requires a bit more effort.

\begin{Lemma}\label{easybadprops}
$\calN$ does not have strongly locally finite $\Hm{1}$-measure. $\Lambda$ does not have either strongly or weakly locally finite $\Hm{1}$-measure. 
\end{Lemma}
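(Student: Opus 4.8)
The plan is to establish the two claims separately, each by exhibiting a compact set (or a neighbourhood of a point) on which the $\Hm{1}$-measure is infinite. Both computations reduce to summing the lengths of countably many horizontal, respectively parabolic, arcs that accumulate at a single point.

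First I would handle $\calN=\bigcup_{n=1}^\infty \R\times\{1/n\}$. The natural choice is a compact set whose intersection with $\calN$ picks up a unit-length segment from each of the infinitely many lines. Concretely, take $K:=[0,1]\times[0,1]$, which is compact. Then $K\cap\calN=\bigcup_{n=1}^\infty [0,1]\times\{1/n\}$, a disjoint countable union of horizontal unit segments. Since $\Hm{1}$ restricted to each horizontal line agrees with $\calL^1$ (using the normalising constant fixed in the Remark after Definition~\ref{nomen}), each segment has $\Hm{1}$-measure exactly $1$, and by countable additivity of $\Hm{1}$ on disjoint Borel sets,
\[
\Hm{1}(K\cap\calN)=\sum_{n=1}^\infty \Hm{1}\bigl([0,1]\times\{1/n\}\bigr)=\sum_{n=1}^\infty 1=\infty.
\]
Hence $\calN$ fails to have strongly locally finite $\Hm{1}$-measure. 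I would note that $\calN$ \emph{is} weakly locally finite, since around any $y=(y_1,1/n_y)\in\calN$ the small ball $B_{(3(n_y+1))^{-1}}(y)$ meets only the single line $\R\times\{1/n_y\}$, giving finite local measure — consistent with the earlier remark that $\calN$ is weakly but not strongly locally finite.

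The more interesting case is $\Lambda=\bigcup_{n=1}^\infty\bigcup_{i=1}^2\hbox{graph}\bigl((-1)^i x^2/n\bigr)$, where I must rule out even \emph{weak} local finiteness. The key observation is that all these parabolas pass through the origin $(0,0)$, so no ball $B_\rho((0,0))$ around the origin can separate them; every such ball meets infinitely many of the arcs. I would therefore fix any $\rho>0$ and estimate $\Hm{1}(B_\rho((0,0))\cap\Lambda)$ from below. On each parabola $y=(-1)^i x^2/n$, the portion with $x\in[0,\rho/2]$ lies inside $B_\rho((0,0))$ for all $n$ once $\rho$ is fixed (since the vertical deviation $x^2/n\le x^2$ is controlled), and the arclength of this portion is at least its horizontal extent $\rho/2$. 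As these arcs are distinct graphs meeting only at the origin, their intersections with $B_\rho((0,0))$ overlap only in the single point $(0,0)$, which has $\Hm{1}$-measure zero. Summing over the infinitely many values of $n$ (and the two sign choices) yields
\[
\Hm{1}\bigl(B_\rho((0,0))\cap\Lambda\bigr)\ \geq\ \sum_{n=1}^\infty \frac{\rho}{2}\ =\ \infty.
\]
Since this holds for every $\rho>0$, there is no radius $\rho_y>0$ making $\Hm{1}(B_{\rho_y}((0,0))\cap\Lambda)$ finite, so $\Lambda$ fails weak local $\Hm{1}$-finiteness at the origin; a fortiori it fails strong local finiteness as well.

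The only genuine obstacle is making the lower bound on each parabolic arc rigorous and uniform in $n$: I must verify that for fixed $\rho$ a fixed-length initial segment of every parabola genuinely lies inside $B_\rho((0,0))$, and that the arclengths do not shrink to zero as $n\to\infty$. Both are easy — the graphs flatten toward the horizontal axis as $n$ grows, so the arc over $x\in[0,\rho/2]$ becomes \emph{closer} to a straight segment of length $\rho/2$, never shorter — but this is the point at which care is needed to ensure the divergent series is legitimately bounded below term by term. Once that uniform lower bound is in place, countable additivity (with the arcs disjoint off the origin) closes the argument.
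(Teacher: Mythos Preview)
Your proposal is correct and follows essentially the same approach as the paper: in both cases you exhibit the accumulation point (the origin, or a compact set containing it) and show that infinitely many arcs of length bounded below by a fixed positive constant lie in every neighbourhood, forcing infinite $\Hm{1}$-measure. The paper's version is terser and uses balls around $(0,0)$ for $\calN$ rather than your compact square, but by the equivalence noted in Definition~\ref{regproperties} this is the same argument; your treatment of $\Lambda$ is in fact more careful than the paper's about the uniform lower bound on arclength.
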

\begin{proof}
For $\calN$, consider $x:=(0,0) \in \R^2$. We see that for each $\rho>0$ there are infinitely many lines of length $\rho/2$ contained in $B_{\rho}(x)$ so that $\Hm{1}(B_{\rho}(x) \cap \calN)=\infty$. As this is true for each $\rho>0$ the result follows.

We consider again $x:=(0,0)$, which lies in $\Lambda$. Now, for any chosen $\delta_0>0$ and $\rho>0$ we see that there are infinitely many lines of length greater than or equal to $\rho$ within $B_{\rho}(x) \cap L$ and therefore that $\Hm{1}(B_{\rho}(x) \cap \Lambda)=\infty$, completing the proof.
\end{proof}

Showing that $dim_{\Hm{}}A>1$ for $A\in \calA(\Psi_c)$ also suffices to show both that $A$ has neither weakly nor strongly locally finite $\Hm{1}$-measure and that $A$ is not $1$-rectifiable. That $dim_{\Hm{}} A>1$ for any $A\in \calA(\Psi_c)$ follows directly from standard theory once we have established that $\At$ satisfies Hutchinson's \cite{hutch} open set condition.

\begin{Definition}\label{similitudes}
Let $(X,d)$ be a metric space. A transformation $\varphi:X\rightarrow X$ is called a \emph{contraction} if there is a $0<r<1$, called the \emph{contraction factor} of $\varphi$, such that $d(\varphi(x_1),\varphi(x_2))\leq r d(x_1,x_2)$ for all $x_1, x_2\in X$. For a collection $\calS:=\{s_1,...,s_Q\}$ of contractions, $A\subset X$, and $n\in \N$, we write
$$\Phi^n(A):=\bigcup\{s_{i_n}\circ...\circ s_{i_1}(A):(i_1,...,i_n)\subset \{1,...,Q\}^n\}.$$
In this case, a set $K$, satisfying 
\begin{equation}\label{attractor}
K=\bigcup_{s\in \calS}s(K) = \lim_{n\rightarrow \infty}\Phi^n(A),
\end{equation}
is called an \emph{attractor} of $\calS$. Here, $A$ is an arbitrary compact subset of $X$ and the limit is taken with respect to the Hausdorff distance on $X$.

In the case that $d(\varphi(x_1),\varphi(x_2))= r d(x_1,x_2)$ for all $x_1,x_2\in X$, $\varphi$ is called a \emph{similitude}. The unique compact attractor of a family of similitudes is called a self-similar set.
\end{Definition}
\begin{Remark}
That the compact attractors of families of contractions and, in particular, similitudes exist, are unique, and are independent of the choice of $A$ in ($\ref{attractor}$) is a standard result. See, for e.g., Federer \cite{federer}.
\end{Remark}

\begin{Definition}\label{opesnset}
Let $\calS:=\{S_1, ..., S_Q\}$ be a collection of similitudes on $\R^2$. $\calS$ is then said to satisfy the open set condition if there exists a non-empty open set $O$ such that
\begin{enumerate}
\item $\bigcup_{i=1}^QS_i(O)\subset O$, and 
\item $S_i(O)\cap S_j(O)=\emptyset$ if $i\not= j$.
\end{enumerate}
\end{Definition}
The key result concerning the dimension of $A\in \calA(\Psi_c)$ is a proof that $A$ is the unique compact attractor of a family of similitudes satisfying the open set condition, as the dimension of any such set is then given by Hutchinson's Theorem. 
\begin{Theorem}\label{hutchthm}(Hutchinson)

Suppose $\calS:=\{s_i\}_{i=1}^m$ is a family of similitudes on $\R^n$ satisfying the open set condition with contraction factors $\{\lambda_i\}_{i=1}^m$ and that $K$ is the unique compact attractor for $\calS$. Then $dim_{\Hm{}}K$ is the unique $r_{\calS}\in \R$ satisfying
$$\sum_{i=1}^m\lambda_i^{r_{\calS}}=1.$$
\end{Theorem}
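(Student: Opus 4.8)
The plan is to show the two inequalities $dim_{\Hm{}}K\leq r_{\calS}$ and $dim_{\Hm{}}K\geq r_{\calS}$ separately, after first checking that $r_{\calS}$ is well defined. Existence and uniqueness of $r_{\calS}$ are immediate: since each $\lambda_i\in(0,1)$, the function $r\mapsto\sum_{i=1}^m\lambda_i^r$ is continuous and strictly decreasing on $[0,\infty)$, equals $m\geq 1$ at $r=0$, and tends to $0$ as $r\to\infty$; hence it takes the value $1$ at a single point, which I call $r_{\calS}$ and abbreviate $s:=r_{\calS}$. For a finite word $\sigma=(i_1,\dots,i_k)\in\{1,\dots,m\}^k$ I write $s_\sigma:=s_{i_1}\circ\cdots\circ s_{i_k}$ and $\lambda_\sigma:=\lambda_{i_1}\cdots\lambda_{i_k}$, so that $s_\sigma$ is a similitude of contraction factor $\lambda_\sigma$ and $s_\sigma(K)$ has diameter $\lambda_\sigma\,d(K)$.

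For the upper bound I would use the natural covers of $K$ obtained by iterating $K=\bigcup_i s_i(K)$, giving $K=\bigcup_{\sigma\in\{1,\dots,m\}^k}s_\sigma(K)$ for each $k$. These pieces cover $K$ by sets whose diameters tend uniformly to $0$, and the associated $\Hm{s}$ sum factorises:
$$\sum_{\sigma\in\{1,\dots,m\}^k}\big(\lambda_\sigma\,d(K)\big)^{s}=d(K)^{s}\Big(\sum_{i=1}^m\lambda_i^{s}\Big)^{k}=d(K)^{s},$$
using $\sum_i\lambda_i^{s}=1$. As this is independent of $k$ while the mesh of the cover tends to $0$, it yields $\Hm{s}(K)\leq d(K)^{s}<\infty$ and hence $dim_{\Hm{}}K\leq s$. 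Note the open set condition is not needed here.

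The lower bound is the heart of the matter and is exactly where the open set condition enters. I would put a mass distribution on $K$ via the coding map: let $\nu$ be the product (Bernoulli) probability measure on $\{1,\dots,m\}^{\N}$ with weights $(\lambda_1^{s},\dots,\lambda_m^{s})$ (a probability vector since $\sum_i\lambda_i^{s}=1$), let $\pi:\{1,\dots,m\}^{\N}\to K$ be the coding map $\pi(\omega)=\lim_k s_{\omega_1\cdots\omega_k}(x_0)$, and set $\mu:=\pi_{*}\nu$, so that $\mu$ is a Borel probability measure supported on $K$ with $\nu([\sigma])=\lambda_\sigma^{s}$ for every cylinder $[\sigma]$. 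The goal is the uniform estimate $\mu(B_r(x))\leq C\,r^{s}$ for all $x\in\R^n$ and all small $r>0$; once this is available, the mass distribution principle gives $\Hm{s}(K)\geq\mu(K)/C=1/C>0$, hence $dim_{\Hm{}}K\geq s$.

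The main obstacle is the geometric counting estimate underlying $\mu(B_r(x))\leq C\,r^{s}$, and this is the only step that uses the open set condition. Let $O$ be the open set from the open set condition, which I may assume bounded; iterating $\bigcup_i s_i(\overline{O})\subset\overline{O}$ together with $K=\lim_n\Phi^n(\overline{O})$ shows $K\subset\overline{O}$. Fix $r$ and truncate each infinite word at the first generation where $\lambda_\sigma\leq r<\lambda_{\sigma^{-}}$ (with $\sigma^{-}$ the word $\sigma$ with its last letter deleted); this produces a finite antichain $\Sigma_r$ such that every infinite word has exactly one prefix in $\Sigma_r$ and $\lambda_\sigma\in(\lambda_{\min}\,r,\,r]$ for $\sigma\in\Sigma_r$, where $\lambda_{\min}=\min_i\lambda_i$. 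Since every $\omega$ with $\pi(\omega)\in B_r(x)$ has its $\Sigma_r$-prefix $\sigma$ satisfy $s_\sigma(K)\cap B_r(x)\neq\emptyset$, monotonicity of $\nu$ gives
$$\mu(B_r(x))=\nu\big(\pi^{-1}(B_r(x))\big)\leq\sum_{\substack{\sigma\in\Sigma_r\\ s_\sigma(K)\cap B_r(x)\neq\emptyset}}\nu([\sigma])=\sum_{\substack{\sigma\in\Sigma_r\\ s_\sigma(K)\cap B_r(x)\neq\emptyset}}\lambda_\sigma^{s}.$$
It therefore remains to bound the number of terms by a constant independent of $x$ and $r$. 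This is the packing argument: the sets $\{s_\sigma(O):\sigma\in\Sigma_r\}$ are pairwise disjoint by part (2) of the open set condition applied along the tree; each $s_\sigma(O)$ contains a ball of radius comparable to $\lambda_\sigma\sim r$ (because $O$ contains a fixed ball) and, when $s_\sigma(K)\cap B_r(x)\neq\emptyset$, is contained in a fixed dilate $B_{C'r}(x)$ (because $O$ is bounded and $K\subset\overline{O}$); comparing $n$-dimensional Lebesgue measures then caps the number of such $\sigma$ by a constant $C=C(n,O,\lambda_{\min})$. Combined with $\lambda_\sigma^{s}\leq r^{s}$ this gives $\mu(B_r(x))\leq C\,r^{s}$, completing the lower bound and the theorem. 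I expect this disjointness-and-volume packing step, together with the reduction to a bounded $O$ and the containment $K\subset\overline{O}$, to be the technically delicate part of the argument.
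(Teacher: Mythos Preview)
Your argument is correct and is essentially the standard proof of Hutchinson's theorem as presented, for instance, in Falconer \cite{falconer} or in Hutchinson's original paper \cite{hutch}. However, the paper does not supply its own proof of this statement: it is quoted as a known result, with a remark pointing to Mandelbrot \cite{mandel} for the existence and uniqueness of $r_{\calS}$ and to \cite{hutch} and standard fractal geometry texts for the full theorem. So there is nothing to compare against beyond noting that your write-up matches the classical route taken in those references: natural covers for the upper bound, and the self-similar measure together with the open set condition and a volume packing argument for the lower bound via the mass distribution principle.
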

\begin{Remark}
The existence of the unique number $r_{\calS}$ was proven by Mandelbrot \cite{mandel}. Hutchinson's theorem can be found in many books on fractal geometry as well as in Hutchinson's original paper \cite{hutch}.
\end{Remark} 

\begin{Proposition}\label{prop3}
Let $\Theta \in \Psi_c$. 

Then there exist similitudes $S_1, S_2:\R^2\rightarrow \R^2$ with contraction factor $\lambda:=(2cos\theta_1)^{-1}$ satisfying the open set condition for which $\At$ is the compact attractor. 

In particular
$$dim_{\Hm{}}\At=-\frac{ln 2}{ln \lambda}.$$
\end{Proposition}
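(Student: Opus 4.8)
The plan is to realize $\At$ (for $\Theta\in\Psi_c$, so $\theta_n\equiv\theta_1$) as the attractor of two explicit similitudes and then apply Hutchinson's theorem directly. Since the construction is genuinely self-similar when all base angles coincide, the first construction step from $A_0=[0,1]\times\{0\}$ produces the two edges $A_{1,1}$ and $A_{1,2}$ of the $\theta_1$-triangular cap, each of length $\lambda:=(2\cos\theta_1)^{-1}$. The idea is that $S_1$ should map the whole figure onto the part sitting over $A_{1,1}$ and $S_2$ onto the part over $A_{1,2}$. Concretely, $S_i$ is the composition of a scaling by $\lambda$, a rotation by $\pm\theta_1$, and a translation (together with a reflection for the second edge, to respect the orientation forced by the condition $T_{1,1}\cap T_{1,2}\subset T_0$). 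I would first write these two maps explicitly, verify that each has contraction factor exactly $\lambda$ (so they are genuine similitudes), and confirm $S_1(A_0)=A_{1,1}$ and $S_2(A_0)=A_{1,2}$.

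Next I would verify that $\At$ is the compact attractor of $\calS:=\{S_1,S_2\}$, i.e. that $\At=S_1(\At)\cup S_2(\At)$. The clean way is to show $\Phi^n(A_0)=A_n^{\Theta}$, or more precisely that applying the maps $n$ times to $T_0$ reproduces exactly the union $T_n^\Theta=\bigcup_{i=1}^{2^n}T_{n,i}$ of the $2^n$ triangular caps at level $n$. This is an induction on $n$ built into the very recursion in Construction \ref{atheta}: the $2^{n+1}$ caps at level $n+1$ are obtained from the $2^n$ caps at level $n$ by placing a $\theta_1$-cap on each of the two shorter edges, which is exactly what $S_1,S_2$ do when the angle is constant. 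Taking the Hausdorff limit then gives $\At=\bigcap_n T_n^\Theta$ as the attractor, and uniqueness of the attractor (the standard result quoted after Definition \ref{similitudes}) identifies it with $\At$.

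I would then check the open set condition. The natural candidate for the open set $O$ is the interior of the triangular cap $T_0$ itself (or a slightly enlarged open triangle containing it). One must verify that $S_1(O)\cup S_2(O)\subset O$, which follows from the construction convention that $T_{1,1},T_{1,2}\subset T_0$, and that $S_1(O)\cap S_2(O)=\emptyset$, which follows from the fact that the two level-one caps have disjoint interiors (their intersection has zero area, being contained in a single shared vertex, by the disjoint-interior property asserted in the construction). Here the hypothesis $\theta_1<\pi/24$ guarantees the caps are thin enough to sit inside $T_0$ without overlapping. Once the open set condition holds, Hutchinson's Theorem \ref{hutchthm} applies: with two maps of equal contraction factor $\lambda$, the dimension $r$ solves $2\lambda^{r}=1$, i.e. $\lambda^{r}=1/2$, giving
$$r=-\frac{\ln 2}{\ln\lambda},$$
which is the claimed formula.

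The main obstacle I anticipate is not the dimension computation but the bookkeeping in verifying the open set condition rigorously, in particular confirming that $S_1(O)$ and $S_2(O)$ are genuinely disjoint and both contained in $O$. The delicate point is choosing $O$ so that the self-overlap at the shared vertex $z_{1,1}$ does not violate disjointness; taking $O$ to be an \emph{open} triangle sidesteps this, since the shared vertex lies on the boundary. I would lean on the angle bound $\theta_1<\pi/24$ and the spiraling control already established in Lemma \ref{lem7} to certify that the images nest correctly; everything else (that the maps are similitudes with the stated factor, and that they generate $\At$) is a direct, if careful, unwinding of the recursive construction.
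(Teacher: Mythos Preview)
Your approach is essentially identical to the paper's: write down the two similitudes explicitly, verify by induction that $\Phi^n(T_0)=T_n$, conclude that $\At$ is the attractor, and read off the dimension from Hutchinson's theorem. Two minor notes: the paper's maps are orientation-preserving (scaling composed with rotation by $\pm\theta_1-\pi$ and translation), so no reflection is needed---when you write the maps out you will see the $-\pi$ in the rotation angle handles the orientation; and the paper actually omits verification of the open set condition, so your choice $O=\mathrm{int}(T_0)$ and the check that $\mathrm{int}(T_{1,1})$, $\mathrm{int}(T_{1,2})$ are disjoint subsets of $\mathrm{int}(T_0)$ is a genuine improvement. You do not need Lemma~\ref{lem7} for this---the containment and disjointness follow immediately from the construction of the caps.
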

\begin{proof}
As $\Theta\in \Psi_c$ we can define $\theta \in \R$ so that $\theta=\theta_n$ for all $n\in \N$. Let 
\[M:=\left( \begin{array}{cc}
\cos((-1)^i\theta-\pi) & -\sin((-1)^i\theta-\pi) \\
\sin((-1)^i\theta-\pi) & \cos((-1)^i\theta-\pi)  \end{array} \right)\]
for $i\in \{1,2\}$ and define
\[ S_1:=M_1\cdot \lambda \left(\begin{array}{c} x \\ y\end{array} \right) + \left( \begin{array}{c} 1 \\ 0\end{array} \right) \hbox{ and }  S_2:=M_2\cdot \lambda \left( \begin{array}{c} x \\ y\end{array} \right) + \left( \begin{array}{c} 1/2 \\ h\end{array} \right)\]
where $h:=(tan\theta)/2$ is the vertical height of $T_{0,1}$.

By inspection, the transformations $s_1$ and $s_2$ are similitudes with contraction factor $\lambda$. Direct calculation shows that 
$$T_1=s_1(T_0)\cup s_2(T_0).$$
Suppose that there is a bijection, 
$$j_n:\{1,2\}^n\leftrightarrow \{1,...,2^n\},$$
such that, for each $I=(i_1,...,i_n)\in \{1,2\}^n$,
\begin{equation}\label{hypoth}
s_{i_n}\circ ... \circ s_{i_1}(T_0)=T_{n,j_n(I)},
\end{equation}
giving $\Phi^n(T_0)=T_n$, as is the case for $n=1$. 
By showing that the same supposition then holds for $n+1$ we deduce, by induction, that the supposition holds for all $n\in \N$.

Now let $(i_1,...,i_{n+1})\in \{1,2\}^{n+1}$. We see 
\begin{equation}\label{attract1}
S_{i_1}(T_0)=T_{1,i_1}\subset T_{0}
\end{equation}
is a triangular cap on $A_{1,i_1}$, a shorter side of the isosceles triangle $T_0$. For $I_1:=(i_2,...,i_{n+1})\in \{1,2\}^n$ , the induction hypothesis gives
$$S_{i_{n+1}}\circ ...\circ s_{i_2}(T_0)=T_{n,j_n(I_1)}.$$
Since the transformations $s_1$ and $s_2$ are similitudes it follows that 
$$s_{i_{n+1}}\circ ... \circ s_{i_1}(T_0)$$
is a $\theta$-triangular cap on a shorter side of $T_{n,j_n(I_1)}.$ That is 
$$s_{i_{n+1}}\circ ... \circ s_{i_1}(T_0)\in \{T_{n+1,i}\}_{i=1}^{2^{n+1}} \hbox{ and } s_{i_{n+1}}\circ ... \circ s_{i_1}(T_0)\subset T_{n,j_n(I_1)}.$$
For $(i_1,...,i_{n+1})\not= (k_1,...,k_{n+1})\in \{1,2\}^{n+1}$ either
\begin{enumerate}[(i)]
\item $k_1\not=i_1$ or
\item $I_2:=(k_2,...,k_{n+1})\not=I_1.$
\end{enumerate}
In case (i) $S_{i_1}(T_0)=T_{1,i_1}\not=T_{1,k_1}=S_{k_1}(T_0)$ and thus 
$$s_{i_{n+1}}\circ ... \circ s_{i_1}(T_0)\not= s_{k_{n+1}}\circ ... \circ s_{k_1}(T_0).$$
In case (ii) 
$$s_{i_{n+1}}\circ ... \circ s_{i_1}(T_0)\subset T_{n,j_n(I_1)} \not= T_{n,j_n(I_2)}\supset s_{k_{n+1}}\circ ... \circ s_{k_1}(T_0).$$
It follows that $\{s_{i_{n+1}}\circ ... \circ s_{i_1}(T_0):(i_1,...,i_{n+1})\in \{1,2\}^{n+1}\}$ is a set of $2^{n+1}$  different elements of $\{T_{n+1,i}\}_{i=1}^{2^{n+1}}$. We deduce that the supposition, ($\ref{hypoth}$), also holds for $n+1$. We deduce that 
$$\Phi^n(T_0)=T_n$$
for all $n\in \N$,

Since $d_{\Hm{}}(T_n,\At)\leq d(T_{n,i})=\Hm{1}(A_{n,i}),$ and $\lim_{n\rightarrow \infty}\Hm{1}(A_{n,i})=0$, it follows that $\At$ is the attractor of $\{s_1,s_2\}$. 

Setting now $s=-ln 2 (ln \lambda)^{-1}$ we observe that 
$$\sum_{i=1}^2\lambda^s=1$$
so that the result now follows from Theorem $\ref{hutchthm}$.
\end{proof}
From the above Lemma we deduce the necessary measure properties for sets in $\calA(\Psi_c)$.
\begin{Lemma}\label{lem12}
For each $\At \in \calA(\Psi_c)$, $dim_{\Hm{}}\At>1$, $\At$ has neither strongly nor weakly locally $\Hm{1}$-finite measure, and is not countably $1$-rectifiable.
\end{Lemma}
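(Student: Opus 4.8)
The plan is to derive all three conclusions from the single dimension fact $\dim_{\Hm{}}\At>1$, which is immediate once we invoke the preceding Proposition $\ref{prop3}$. Indeed, for $\Theta\in\Psi_c$ we have $\lambda=(2\cos\theta_1)^{-1}$ with $0<\theta_1<\pi/24$, so $2\lambda=(\cos\theta_1)^{-1}>1$, giving $\lambda>1/2$ and hence $-\ln 2/\ln\lambda>1$. By Proposition $\ref{prop3}$ this number equals $\dim_{\Hm{}}\At$, so $\dim_{\Hm{}}\At>1$. This is the crux; everything else is a short deduction from general measure theory.

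Next I would deduce the failure of local $\Hm{1}$-finiteness. Since $\dim_{\Hm{}}\At>1$, by the very definition of Hausdorff dimension we have $\Hm{1}(\At)=\infty$. To upgrade this to a \emph{local} statement at every point, I would use the self-similarity: $\At$ is the attractor of $\{s_1,s_2\}$, so $\At=s_{i_n}\circ\cdots\circ s_{i_1}(\At)$ summed over all words, and each piece $T_{n,i}\cap\At$ is a scaled copy of $\At$ (scaled by $\lambda^n$), hence also has infinite $\Hm{1}$-measure. Given any $y\in\At$ and any $\rho>0$, choose $n$ large enough that some cap $T_{n,i}$ containing a point near $y$ has diameter less than $\rho$ and lies inside $B_\rho(y)$; since that cap meets $\At$ in a scaled copy of $\At$ of infinite $\Hm{1}$-measure, we get $\Hm{1}(B_\rho(y)\cap\At)=\infty$. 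This shows $\At$ is not weakly (hence not strongly) locally $\Hm{1}$-finite.

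Finally I would deduce pure unrectifiability's weaker cousin here — namely that $\At$ is not countably $1$-rectifiable — again directly from the dimension bound. A countably $1$-rectifiable set is contained, up to an $\Hm{1}$-null set $M_0$, in a countable union of Lipschitz images of $\R$; each such Lipschitz image $f_i(\R)$ has $\Hm{1}$-$\sigma$-finite measure and in particular Hausdorff dimension at most $1$, so a countably $1$-rectifiable set has Hausdorff dimension at most $1$. Since $\dim_{\Hm{}}\At>1$, the set $\At$ cannot be countably $1$-rectifiable. (One may phrase this via countable stability of Hausdorff dimension together with the fact that Lipschitz maps do not increase dimension.)

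I expect the only genuine subtlety to be the localisation argument in the second step: one must justify that arbitrarily small caps $T_{n,i}$ sit inside $B_\rho(y)$ and carry a full scaled copy of $\At$. This is where the estimate $\Hm{1}(A_{n,i})=2^{-n}\prod_{i=1}^m(\cos\theta_i)^{-1}\to 0$ from Lemma $\ref{lem5}$ (so diameters of caps shrink to zero) together with the nesting $T_{n+1,i}\subset T_{n,j}$ from Construction $\ref{atheta}$ does the work, and the scaling invariance of $\At$ established in Proposition $\ref{prop3}$ guarantees each cap-piece is a similar copy of $\At$ with infinite $\Hm{1}$-measure. The dimension-monotonicity facts used in steps two and three are standard and may be cited rather than reproved.
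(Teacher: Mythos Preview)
Your argument is correct. The dimension computation and the rectifiability step match the paper's proof essentially verbatim. The one genuine difference is in the local-finiteness step: the paper does not argue via self-similarity at all. Instead it observes that $\At$ is compact (being the nested intersection of the compact sets $T_n^\Theta$), and that any compact set which is weakly locally $\Hm{1}$-finite can be covered by finitely many balls of finite $\Hm{1}$-measure, hence has $\Hm{1}(\At)<\infty$ and so $\dim_{\Hm{}}\At\leq 1$, contradicting $\dim_{\Hm{}}\At>1$. The same remark dispatches strong local finiteness. Your route via scaled copies inside every ball is valid and in fact yields the marginally stronger statement that $\Hm{1}(B_\rho(y)\cap\At)=\infty$ for \emph{every} $y\in\At$ and every $\rho>0$; but it requires the extra input (the similitude structure from Proposition~\ref{prop3} and the diameter decay from Lemma~\ref{lem5}) that the paper's compactness observation avoids entirely.
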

\begin{proof}
From Proposition $\ref{prop3}$, 
$$dim_{\Hm{}}\At = -\frac{ln 2}{ln (2cos\theta_1)^{-1}} >1.$$
Since all compact subsets of $\R^2$ weakly locally $\Hm{1}$-finite, strongly locally $\Hm{1}$-finite, or $1$-rectifiable are of Hausdorff dimension 1, we deduce that $\At$ can possess none of these properties.
\end{proof}
\begin{Remark}\label{kochrectrem}
It is actually also true that each $\At\in \calA(\Theta)$ is purely $1$-unrectifiable. As in this case $\At$ is a Koch curve this is not a new result, however, the result also follows from our analysis of sets $\At \in \calA(\Psi)\cap R(w,\rho,\delta;1)$ below. See Corollary $\ref{psicunrect}$.
\end{Remark}

Showing that elements of $\calA(\Psi)\cap R(w,\rho,\delta;1)$ satisfy the necessary measure properties is somewhat more delicate as, by Lemma $\ref{lem2}$, any such set has Hausdorff dimension 1. A simple summary of measure properties, as in Lemma $\ref{lem12}$, via Proposition $\ref{prop3}$, is therefore not possible. A suitable example of a set in $R(w,\rho, \delta; 1)$ with the appropriate measure properties is a subset of an element of $\calA(\Psi_0)$. That $\Theta \in \Psi_0$ is, however, also not a sufficient condition. To find the conditions on $\Theta$ necessary to ensure the desired measure properties, we need first to consider the representation of a set $\At$ by a function.

\begin{Definition}\label{dyadic}
We define the dyadic points in $[0,1]$ by
$$D_n:=\{d_{n,j}:=j2^{-n}:j\in\{0,1,...,2^n\} \}\hbox{ and } D:= \bigcup_{n\in \N} D_n.$$
\end{Definition}
\begin{Definition}\label{curlyf} 
Let $\Theta \in \Psi$. We define $F_n:[0,1]\rightarrow A_n^{\Theta}$ to be the Lipschitz functions satisfying the conditions
\begin{enumerate}
\item $F_n^{\Theta}|_{[d_{n,j},d_{n,j+1}]}$ is linear, 
\item $F_n^{\Theta}(d_{n,j})=E_{n,j}^{\Theta}$ for $j\in \{0,1,...,2^n\}$, and 
\item $F_n^{\Theta}([d_{n,j},d_{n,j+1}])=A_{n,j+1}^{\Theta}$ for $j\in \{0,...,2^n-1\}$. 
\end{enumerate}
Define $\F1_{\Theta}:A_{0,1}\rightarrow \At$ by 
\[\F1_{\Theta}(x):=\lim_{n\rightarrow \infty}F_n^{\Theta}(x).\]
\end{Definition}

\begin{Remark}
(1) In the case that the $\Theta$ being refered to is clear, the sub and super script $\Theta$ will be omitted.

(2) For all $y\in [0,1]$ there is a sequence $j(n,y)$ such that 
$$y\in [d_{n,j(n,y)},d_{n,j(n,y)}] \hbox{ and } F_n(y)\in T_{n.j(n,y)}\subset T_{n-1.j(n-1,y)}.$$
Since $d(T_{n,j})\rightarrow 0 $ as $n\rightarrow \infty$, it follows that $\F1$ is well defined.

(3) It is easy to check that $\F1(D)=E$.

(4) By the definition and Lemma $\ref{lem5}$ it is clear, for each $n\in \N$, that $F_n$ is a Lipschitz function with
$$Lip F_n=\prod_{i=1}^n(cos\theta_i)^{-1}.$$
\end{Remark}

\begin{Proposition}\label{F1props}
For any $\Theta \in \Psi$, $\F1_{\Theta}$ is a bicontinuous bijection. 
\end{Proposition}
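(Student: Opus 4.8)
The plan is to use that $\F1_{\Theta}$ is the uniform limit of the continuous (indeed Lipschitz) maps $F_n$ and that $\At$ is a nested intersection of triangular caps of vanishing diameter. Since the domain $A_{0,1}=[0,1]\times\{0\}$ is compact and $\At\subset\R^2$ is Hausdorff, any continuous bijection between them is automatically a homeomorphism; thus it suffices to prove that $\F1_{\Theta}$ is a continuous bijection, after which bicontinuity comes for free.

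First I would establish continuity. Because each cap has base angle less than $\pi/4$, Lemma~\ref{lem5} gives $d(T_{n,i})=\Hm{1}(A_{n,i})=2^{-n}\prod_{k=1}^{n}(\cos\theta_k)^{-1}\le(2\cos(\pi/24))^{-n}$, which tends to $0$ uniformly in $i$ since $2\cos(\pi/24)>1$. For fixed $x$ and $m\ge n$, iterating the remark following Definition~\ref{curlyf} yields $F_m(x)\in T_{m,j(m,x)}\subset T_{n,j(n,x)}$; as $T_{n,j(n,x)}$ is closed, the limit $\F1_{\Theta}(x)$ lies there too, so both $F_n(x)$ and $\F1_{\Theta}(x)$ sit inside a set of diameter at most $(2\cos(\pi/24))^{-n}$. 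Hence $\sup_{x}|F_n(x)-\F1_{\Theta}(x)|\to0$, the convergence is uniform, and the limit $\F1_{\Theta}$ is continuous; in particular $\F1_{\Theta}([0,1])$ is compact.

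Surjectivity I would obtain by a density argument. The inclusion just used shows $\F1_{\Theta}(x)\in\bigcap_n T_{n,j(n,x)}\subset\At$, so $\F1_{\Theta}([0,1])\subset\At$. The same remark records $\F1_{\Theta}(D)=E$, and $E$ is dense in $\At$: any $p\in\At$ lies in some $T_{n,i}$, all three of whose corner points belong to $E_{n,i}\subset E$ and lie within $d(T_{n,i})$ of $p$. Since $\F1_{\Theta}([0,1])$ is closed and contains $E$, it contains $\overline{E}=\At$; together with the reverse inclusion this gives $\F1_{\Theta}([0,1])=\At$.

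The main obstacle is injectivity, as it is the only step that genuinely uses the geometry of the construction. Suppose $x<y$ with $\F1_{\Theta}(x)=\F1_{\Theta}(y)=p$. For every $n$ we have $p\in T_{n,j(n,x)}\cap T_{n,j(n,y)}$, and by Construction~\ref{atheta} two level-$n$ caps meet only if their indices differ by at most one (this non-self-intersection being underpinned by the spiralling control of Lemma~\ref{lem7}); hence $|j(n,x)-j(n,y)|\le1$. But $x$ and $y$ lie in the dyadic intervals indexed by $j(n,x)$ and $j(n,y)$, which are equal or adjacent, so both lie in a single interval of the form $[d_{n,k-1},d_{n,k+1}]$, giving $|x-y|\le2^{1-n}$. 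Letting $n\to\infty$ forces $x=y$, a contradiction. With injectivity established, $\F1_{\Theta}$ is a continuous bijection from a compact space onto a Hausdorff space, hence a bicontinuous bijection.
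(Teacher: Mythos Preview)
Your proof is correct and follows the same overall architecture as the paper's (injectivity, surjectivity, continuity, then the compact-to-Hausdorff principle for bicontinuity), but two of the steps are carried out differently and are worth comparing. For continuity, the paper shows the family $\{F_n\}$ is equicontinuous and bounded and invokes Arzel\`a--Ascoli, whereas you obtain the uniform bound $\sup_x|F_n(x)-\F1_\Theta(x)|\le d(T_{n,\cdot})\le(2\cos(\pi/24))^{-n}$ directly from the nesting of the caps; your route is more elementary and gives the same conclusion without the compactness machinery. For surjectivity, the paper explicitly builds a preimage of a given $z\in\At$ by intersecting the nested dyadic intervals $\F1_\Theta^{-1}(T_{n,i(n,z)})$, while you use that $\F1_\Theta([0,1])$ is closed and contains the dense set $E=\F1_\Theta(D)$; both are clean, with the paper's version being slightly more constructive. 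Your injectivity argument is the contrapositive of the paper's and relies on the same fact from Construction~\ref{atheta} that $T_{n,i}\cap T_{n,j}\neq\emptyset$ only if $|i-j|\le 1$ (this is built into the inductive hypothesis of the construction itself, so the appeal to Lemma~\ref{lem7} is not strictly needed here).
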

\begin{proof}
For $y,z \in A_{0,1}$, $y\not= z$ we can find $n,j,k \in \N$ in order that $y\in [d_{n,j-1},d_{n,j}]$ and $z\in [d_{n,k-1},d_{n,k}]$ with $|j-k|\geq 2$. It follows that  $\F1(y)\in T_{n,j}$ and $\F1(z)\in T_{n,k}$. Since $T_{n,k}\cap T_{n,j}=\emptyset$, $\F1(y)\not= \F1(z)$ and hence $\F1$ is injective.

Suppose $z\in \At$, then $z\in T_{n,i(n,z)}$ for each $n\in \N$. Choose 
$$x_z\in \bigcap_{n\in \N}[d_{n,i(n,z)},d_{n,i(n,z)+1}].$$ 
Then
$$\lim_{n\rightarrow \infty}|F_n(x)-z|\leq \lim_{n\rightarrow \infty}d(T_{n,i(n,z)}) = \lim_{n\rightarrow \infty}2^{-n}\prod_{i=1}^n(cos\theta_i)^{-1}=0,$$
hence $\F1(x)=z$ and $\F1$ is surjective.

Let $\eta>0$, then there is an $n\in \N$ such that $diam(T_{n,j})\in [\eta/4,\eta/2).$ Let $\delta = 2^{-n-2}$ and $x,y \in A_{0,1}$ satisfy $|y-x|<\delta.$ It follows that there exists $k\in \{0,1,...,2^n\}$ such that $x,y \in [d_{n,k-1},d_{n,k+1}]$, and therefore that $F_m(x),F_m(y) \in T_{n,k}\cup T_{n,k+1}$ for all $m \geq n$.

Let $m\in \N$, if $m \leq n$
\[|F_m(x)-F_m(y)|<Lip F_m \delta < (1+m16\e^2)^{1/2}2^{-n}<diam(T_{n,j})<\eta.\]
If $m > n$, $|F_m(x)-F_m(y)|<diam(T_{n,k})+diam (T_{n,k+1}) < \eta$. $\{F_n\}_{n\in \N}$ is therefore equicontinuous. 

$\{F_n\}_{n\in \N}$ is both equicontinuous and bounded, it therefore follows from the Arzela-Ascoli Theorem that $\F1$ is continuous. In turn, since $[0,1]$ is compact and $\F1$ is a continuous bijection, we deduce that $\F1^{-1}$ is continuous.
\end{proof}
$\F1_{\Theta}$ assists greatly in proving measure results concerning $\At$. We use $\F1_{\Theta}$ firstly to show the infinite measure of appropriately selected $\At$. For $\Theta:=\{\theta_i\}_{i\in \N}\in \Psi_0$, with sufficiently rapidly decreasing $\theta_i$, $\lim_{n\rightarrow \infty}\prod_{i=1}^n(cos\theta_i)^{-1}<\infty$, so that by Lemma $\ref{lem5}$ and simple calculations, as in the above Proposition, it follows that $\At$ is a Lipschitz curve. As a Lipschitz curve has very regular measure theoretic properties, such sets are not the sets sought. This motivates the next definition of a new subset of $\calA(\Psi)$ in which our counter example will be found.

\begin{Definition}\label{psiinfty}
Define
$$\Psi_{\infty}:=\left\{\Theta \in \Psi:\lim_{n\rightarrow \infty}\prod_{i=1}^n(cos\theta_n)^{-1}=\infty\right\}$$
and 
$$\calA(\Psi_{\infty}):=\{\At:\Theta \in \Psi_{\infty}\}.$$
\end{Definition}

\begin{Proposition}\label{HF1infinite}
Let $\Theta \in \Psi$ and $K\subset [0,1]$ satisfy $\Hm{1}(K)>0$. Then
$$\Hm{1}(\F1_{\Theta}(K))\geq \frac{\Hm{1}(K)}{8}.$$
Furthermore, if $\Theta \in \Psi_{\infty}$, $\Hm{1}(\F1_{\Theta}(K))=\infty$.
\end{Proposition}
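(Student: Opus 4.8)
The plan is to derive both statements from one covering estimate for the inverse parametrization $\F1_\Theta^{-1}$, using that $\F1_\Theta$ respects the dyadic nesting of the caps $T_{n,i}$. Write $\ell_n:=\prod_{i=1}^n(\cos\theta_i)^{-1}$, so Lemma \ref{lem5} gives $\Hm{1}(A_{n,i}^\Theta)=2^{-n}\ell_n$, and recall $\F1_\Theta([d_{n,j},d_{n,j+1}])=\At\cap T_{n,j+1}$. I will use two elementary facts: $\ell_n\geq 1$; and $\Hm{1}(A_{n,\cdot})\to 0$ geometrically, since consecutive levels have ratio $\tfrac12(\cos\theta_n)^{-1}\leq\tfrac12(\cos(\pi/24))^{-1}<1$. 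The latter forces covers of small mesh to live at high construction levels, which is exactly what drives the $\Psi_\infty$ case.

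The geometric heart is a uniform separation estimate: there is an absolute $c_0>0$ (I expect $c_0=\tfrac12$ to be comfortably admissible) so that if $u\in T_{n,i}\cap\At$ and $v\in T_{n,k}\cap\At$ with $|i-k|\geq 2$, then $|u-v|\geq c_0\,\Hm{1}(A_{n,i})$. This is precisely what Lemma \ref{lem7}(ii) provides: such a $v$ lies in a cap disjoint from $T_{n,i}$ and its neighbours, hence outside the rectangle $R_{n,i}$, while $u$ lies in the thin triangle $T_{n,i}$ within perpendicular distance $\tfrac12\Hm{1}(A_{n,i})\tan\theta_1$ of the base $A_{n,i}$. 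Reading this in the coordinates $O_{A_{n,i}}$, the point $v$ must leave $R_{n,i}$ either across a long side, at perpendicular distance $2\Hm{1}(A_{n,i})$ from the base, or across a short side, where the cone estimates established inside the proof of Lemma \ref{lem7} already put $v$ a definite fraction of $\Hm{1}(A_{n,i})$ beyond $u$ horizontally. Turning this dichotomy into a clean explicit $c_0$ is the step I expect to be the main obstacle; the rest is bookkeeping.

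Granting the estimate, fix $\delta>0$ and a cover $\{U_m\}$ of $\F1_\Theta(K)$ with $d_m:=\rmdiam U_m<\delta$, each $U_m$ meeting $\At$. For each $m$ let $n=n(m)$ be minimal with $c_0\Hm{1}(A_{n,\cdot})\leq d_m$, so that $d_m<c_0\Hm{1}(A_{n-1,\cdot})$. If $U_m\cap\At$ met two caps $T_{n-1,j_1},T_{n-1,j_2}$ with $|j_1-j_2|\geq 2$, the separation estimate would give two of its points at distance $\geq c_0\Hm{1}(A_{n-1,\cdot})>d_m$, a contradiction; hence the indices of the level-$(n-1)$ caps meeting $U_m\cap\At$ span at most $\{a,a+1\}$, and therefore
\[
\F1_\Theta^{-1}(U_m)=\F1_\Theta^{-1}(U_m\cap\At)\subset[d_{n-1,a-1},\,d_{n-1,a+1}],
\]
an interval of length $4\cdot 2^{-n}$. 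Since $c_0 2^{-n}\ell_n=c_0\Hm{1}(A_{n,\cdot})\leq d_m$, this length is at most $\tfrac{4}{c_0}\,d_m/\ell_n$. As the sets $\F1_\Theta^{-1}(U_m)$ cover $K$, summing and using $\ell_{n(m)}\geq 1$ yields $\Hm{1}(K)\leq\tfrac{4}{c_0}\sum_m d_m$; taking the infimum over covers gives $\Hm{1}(K)\leq\tfrac{4}{c_0}\,\Hm{1}(\F1_\Theta(K))$, i.e. the asserted bound for $c_0=\tfrac12$.

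For the final claim I keep the factor $\ell_n$ instead of discarding it. When $\Theta\in\Psi_\infty$ we have $\ell_n\to\infty$; moreover $d_m<\delta$ forces $\Hm{1}(A_{n(m),\cdot})\leq d_m/c_0<\delta/c_0$, and since $\Hm{1}(A_{n,\cdot})\to 0$ this makes $n(m)\geq n(\delta)$ with $n(\delta)\to\infty$ as $\delta\to 0$. As $\ell_n$ is non-decreasing, the previous estimate sharpens to $\Hm{1}(K)\leq\tfrac{4}{c_0\,\ell_{n(\delta)}}\sum_m d_m$, whence $\sum_m d_m\geq\tfrac{c_0}{4}\,\ell_{n(\delta)}\,\Hm{1}(K)$ for every admissible cover. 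Thus $\Hm{1}_\delta(\F1_\Theta(K))\geq\tfrac{c_0}{4}\,\ell_{n(\delta)}\,\Hm{1}(K)$, and letting $\delta\to 0$ gives $\Hm{1}(\F1_\Theta(K))=\infty$ because $\Hm{1}(K)>0$.
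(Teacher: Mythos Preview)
Your argument is essentially the paper's own: both pivot on the separation estimate from Lemma~\ref{lem7}(ii) to show that a cover element of diameter $d_m$ meets at most two adjacent caps at the level where $\Hm{1}(A_{n,\cdot})$ is comparable to $d_m$, whence its $\F1_\Theta$-preimage sits in a dyadic interval of controlled length. The only cosmetic difference is that the paper routes the $\Psi_\infty$ case through the piecewise linear maps $F_{n_0}$ (choosing $n_0$ so that $\Hm{1}(F_{n_0}(K))>M$ and then comparing covers of $F_{n_0}(K)$), whereas you keep the stretch factor $\ell_n$ explicit in the bound $|\F1_\Theta^{-1}(U_m)|\leq 4d_m/(c_0\ell_{n(m)})$ and send $\ell_{n(\delta)}\to\infty$ directly; your version is marginally cleaner. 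Your identification of the separation constant $c_0$ as the only real work is accurate, and the dichotomy you sketch (leaving $R_{n,i}$ through a long or a short side) is exactly how one extracts it from the proof of Lemma~\ref{lem7}; the short-side case gives $c_0\geq\cos(2\theta_1)>\cos(\pi/12)$, so $c_0=1/2$ is comfortable.
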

\begin{proof}
If $\Theta \not\in \Psi_{\infty}$ let $M:=\Hm{1}(K).$ Otherwise, choose $M>0$ arbitrarily.

Since $LipF_n\geq 1$ for each $n\in \N$ and since, by Lemma $\ref{lem5}$, 
$$\lim_{n\rightarrow \infty}LipF_n=\infty$$
 whenever $\Theta \in \Psi_{\infty}$, there is an $n_0\in \N$ such that $\Hm{1}(F_{n_0}(K))>M.$

Note also, by Lemma $\ref{lem7}$, that we can define $n:(0,1)\rightarrow \N$ so that, for any set $B\subset \R^2$ with $d(B)<\delta$,
$$\Hm{1}(A_{n(\delta),i})\in (d(B),2d(B))$$
and 
$$|\{i\in \{1,...,2^{n(\delta)}\}:T_{n(\delta),i} \cap B \not= \emptyset \}|\leq 2.$$
Now let $\delta_0>0$ be chosen so that $n(\delta_0)>n_0$ and so that for each $0<\delta \leq \delta_0$
$$\Hm{1}_{\delta}(F_{n_0}(K))>\frac{M}{2}.$$
Let $\calB:=\{B_i\}_{i=1}^{\infty}$ be a $\delta$-cover of $\F1(K)$ for some $\delta<\delta_0/4$. 

Let $B\in \calB$ and note that $n_1:=n(d(B))>n_0.$ There are, therefore, $j_B, j_{B+1}\in \{1,...,2^{n_1}\}$ such that 
$$B\cap T_{n_1,j}=\emptyset \hbox{ for } j\not\in \{j_B, j_{B+1}\}.$$
It follows that $\F1^{-1}(B)\subset [(j_B-1)2^{-n_1}, (j_B+1)2^{-n_1}]$ with 
$$2^{1-n_1}=2\Hm{1}(A_{n_1,j_B})\prod_{i=1}^{n_1}cos\theta_i < 4d(B)\prod_{i=1}^{n_1}cos\theta_i.$$
We deduce, since $n_1>n_0$, that
\begin{eqnarray}
d(F_{n_0}(\F1^{-1}(B)))  <  4d(B)\prod_{i=1}^{n_1}cos\theta_i\prod_{i=1}^{n_0}(cos\theta_i)^{-1} 
< 4d(B) < \delta_0. 
\end{eqnarray}
Since $\{\F1^{-1}(B)\}_{B\in \calB}$ is a cover of $\F1^{-1}(K)$ we infer that $\{F_{n_0}(\F1^{-1}(B))\}_{B\in \calB}$ is a $\delta_0$-cover of $F_{n_0}(K)$ and thus that 
\begin{equation}\label{mon8}
\sum_{i=1}^{\infty}d(B)> \frac{1}{4}\sum_{i=1}^{\infty}d(F_{n_0}(\F1^{-1}(B)))>\frac{M}{8}.
\end{equation}
As this is true for each $\delta < \delta_0$ the first claim holds. In the case that $\Theta\in \Psi_{\infty}$, ($\ref{mon8}$) holds for any $M>0$ and the result follows.
\end{proof}
The above result suffices to show the existence of sets in $R(w,\rho, \delta; 1)$ without locally finite measure. 

In showing that $\calA(\Psi_0)\subset R(w,\rho,\delta;1)$, Lemma $\ref{lem7}$ showed that sets in $\calA(\Psi_0)$ do not spiral too tightly. In order to show non-rectifiability results, we need to show that sets $\At$ can spiral quickly enough. After establishing how we define and control the rotation of the approximating triangles and therefore the spiraling of sets in $\calA(\Psi)$, we show in Lemma $\ref{RN0andc}$ that, for appropriately selected $\Theta$, $\At$ does spiral appropriately. 

\begin{Definition}\label{thetani}
For $\theta\in (-\pi,\pi)$, let $R_{\theta}:\R^2\rightarrow \R^2$ be the rotation in the positive (that is, anticlockwise) direction by an angle of $\theta$. 

Let $\Theta \in \Psi$, we define $\vartheta_{n,i}^{\Theta}\in \{\theta_n,-\theta_n\}$ to be the angle satisfying
$$R_{\vt_{n,i}}(G_{n,i}^{\Theta})=G_{n-1,j(n,i)}^{\Theta}$$
As per usual, the superscipt $\Theta$ will be dropped in the case that the $\Theta$ being referred to is clear. 
\end{Definition}

\begin{Lemma}\label{vtniformula}
Let $\Theta \in \Psi$, then 
\begin{equation}\label{thetasign}
\vt_{n,i}=
\begin{cases}
\theta_n \ \ 2\not| (n+i) \cr
-\theta_n \ \ 2|(n+i), 
\end{cases}
\end{equation}
where, here, $a|b$ denotes that $a$ divides $b$.
\end{Lemma}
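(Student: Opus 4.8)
The plan is to track, for each edge $A_{n,i}^\Theta$, its \emph{directed} angle $\phi_{n,i}$ measured in the sense of traversal of the curve $F_n$ from $(0,0)$ to $(1,0)$, so that $A_{n,i}$ joins $E_{n,i-1}$ to $E_{n,i}$. Since $G_{n,i}$ (Definition $\ref{parallels}$) is the line through the origin parallel to $A_{n,i}$, and since, by Construction $\ref{atheta}$, a child edge is a short side of an isosceles $\theta_n$-cap erected on its parent edge, the directed angle of a child differs from that of its parent by exactly $\pm\theta_n$; as $\theta_n<\pi/24<\pi/2$, this difference is the unique rotation of modulus less than $\pi/2$ carrying the line $G_{n,i}$ onto $G_{n-1,j(n,i)}$. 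Hence, with $j(n,i)=\lceil i/2\rceil$ from Definition $\ref{inx}$, one has $\vt_{n,i}=\phi_{n-1,j(n,i)}-\phi_{n,i}\in\{\theta_n,-\theta_n\}$, and the whole content of the lemma is to pin down this sign, i.e. whether at $A_{n,i}$ the curve turns toward or away from its parent's direction.

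First I would record the two elementary direction identities for a single cap. If $T$ is a $\theta$-triangular cap on a directed base of angle $\beta$ whose apex lies to the \emph{left} (counter-clockwise side) of the base, then its first short side (the one meeting the initial endpoint of the base) has directed angle $\beta+\theta$ and its second short side has angle $\beta-\theta$; if the apex lies to the \emph{right}, the two values are interchanged. Combined with the index convention that the first child carries the odd index $i=2j-1$ and the second the even index $i=2j$, this expresses the sign of $\vt_{n,i}$ purely in terms of the parity of $i$ and the side on which the apex of the parent cap $T_{n-1,j(n,i)}$ sits.

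The crux is therefore the geometric claim that the apex of every cap at a fixed level lies on the same side of its directed base, and that this side alternates with the level: the apex of $T_{n,i}$ lies to the left of $A_{n,i}$ when $n$ is even and to the right when $n$ is odd. I would prove this by induction on $n$. The base case is immediate from Construction $\ref{atheta}$, where $T_{0,1}$ has apex $(\tfrac12,\tfrac{\tan\theta_1}{2})$, to the left of the rightward base $A_0$. For the inductive step the key observation is that an isosceles triangle lies entirely on the side of each of its two short sides \emph{opposite} to the side on which its apex lies relative to the base; thus, by the inductive hypothesis, each cap $T_{n-1,j}$ lies on a single, level-determined side of both child edges $A_{n,2j-1},A_{n,2j}$. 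Since Definition $\ref{def8}$ forces the new cap $T_{n,i}$ to overlap, hence to point into, its parent $T_{n-1,j(n,i)}$, the apex of $T_{n,i}$ must lie on that same side, which is opposite to the parent apex's side and therefore flips the parity, completing the induction. This orientation bookkeeping --- keeping ``left/right of a directed edge'' consistent through the recursive ``points into the previous triangle'' rule --- is the only delicate point, and I expect it to be the main obstacle.

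With the alternation established, the proof finishes by a four-fold case check. The parent cap $T_{n-1,j(n,i)}$ has its apex on the left exactly when $n-1$ is even, i.e. when $n$ is odd. Feeding this and the parity of $i$ into the two direction identities gives $\vt_{n,i}=-\theta_n$ in the cases ($n$ odd, $i$ odd) and ($n$ even, $i$ even), and $\vt_{n,i}=+\theta_n$ in the cases ($n$ odd, $i$ even) and ($n$ even, $i$ odd). In every case $\vt_{n,i}=\theta_n$ precisely when $n+i$ is odd and $\vt_{n,i}=-\theta_n$ precisely when $n+i$ is even, which is ($\ref{thetasign}$). As a sanity check this reproduces the level-$1$ values $\phi_{1,1}=\theta_1$, $\phi_{1,2}=-\theta_1$, whence $\vt_{1,1}=-\theta_1$ and $\vt_{1,2}=\theta_1$.
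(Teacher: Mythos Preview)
Your argument is correct and takes a genuinely different route from the paper. The paper first restricts to constant sequences $\Theta\in\Psi_c$ and exploits the two similitudes $S_1,S_2$ from Proposition~\ref{prop3}: it computes how the similitudes permute the indices, namely $S_k(T_{n,j})=T_{n+1,\,k\cdot 2^n-j+1}$, and uses that conformal maps preserve signed angles to obtain the recursion $\vt_{n+1,\,2^n-j+1}=\vt_{n,j}=\vt_{n+1,\,2^{n+1}-j+1}$, which together with the base values $\vt_{1,1}=-\theta_1$, $\vt_{1,2}=\theta_1$ yields (\ref{thetasign}) by induction. It then disposes of general $\Theta\in\Psi$ in one line, observing that letting $\theta_n$ vary affects only the magnitude of $\vt_{n,i}$, not its sign.

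Your approach bypasses the similitudes entirely: you induct directly on the geometric statement that the apex of every level-$n$ cap sits on a fixed side of its directed base, alternating with $n$, using the ``points into the parent'' rule of Definition~\ref{def8} to drive the parity flip. This is more elementary and handles all $\Theta\in\Psi$ uniformly, without the two-step reduction through $\Psi_c$. The paper's route is shorter once the similitude machinery is in place, but yours makes the combinatorics of the sign completely explicit and self-contained; the orientation bookkeeping you flag as the delicate point is exactly right, and your treatment of it (the isosceles triangle lies on the side of each short edge opposite to the apex side relative to the base) is the clean way to do it.
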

\begin{proof}
We first consider $\Theta \in \Psi_c$. By construction 
\begin{equation}\label{vtnif1}
T_{n,i}\cap T_{n,j}\not=\emptyset
\end{equation}
 if and only if $|i-j|\leq 1$. Furthermore, by definition, the similitudes $S_1$ and $S_2$ satisfy
 $$S_1((1,0))=(0,0), \ \ S_1((0,0))=\left(\frac{1}{2}, \frac{tan \theta}{2}\right),$$
 $$S_2((1,0))=\left(\frac{1}{2},\frac{tan \theta}{2}\right), \ \ \hbox{ and } S_2((0,0))=(1,0).$$
 Since, for each $n \in \mathbb{N}$, 
$$(0,0)\in T_{n,1}, \left(\frac{1}{2},\frac{tan \theta}{2}\right)\in T_{n,2^{n-1}}\cap T_{n,2^{n-1}+1}, \ \ \hbox{ and } (1,0)\in T_{n,2^n}$$
we see that 
$$S_1(T_{n,1})=T_{n+1,2^n}, S_1(T_{n,2^n})=T_{n+1,1},$$ 
$$S_2(T_{n,1})=T_{n+1,2^{n+1}}, \ \ \hbox{ and } \ \ S_2(T_{n,2^n})=T_{n+1,2^{n}+1}.$$
Combining with ($\ref{vtnif1}$) we deduce, for $n\in \mathbb{N}$ and $j\in \{1,...,2^n\}$ that 
\begin{equation}\label{vtnif2}
S_1(T_{n,j})=T_{n+1,2^n-j+1} \ \ \hbox{ and } S_2(T_{n,j})=T_{n+1,2^{n+1}-j+1}.
\end{equation}
Now, by construction of the isosceles triangle $T_{0,1}$ on $A_{0,1}$, we see that 
\begin{equation}\label{vtnif3}
\vt_{1,1}=-\theta_1 \ \ \hbox{ and }\vt_{1,2}=\theta_1.
\end{equation}
Further, since $S_1$ and $S_2$ are similitudes, and in particular, conformal mappings, if $S_k(T_{n,i})=T_{n+1,j}$ then $\vt_{n+1,j}=\vt_{n,i}$ for $k\in \{1,2\}, n\in \mathbb{N}$, $i\in \{1,...,2^n\}$ and $j\in \{1,...,2^{n+1}\}$.
By ($\ref{vtnif2}$) it follows that 
\begin{equation}\label{vtnif4}
\vt_{n+1,2^n-j+1}=\vt_{n,j}=\vt_{n+1,2^{n+1}-j+1}.
\end{equation}
By ($\ref{vtnif3}$), using ($\ref{vtnif4}$) inductively, and that $\theta_n=\theta_1=:\theta$ for each $n\in \mathbb{N}$, ($\ref{thetasign}$) follows.

For the more general case, $\Theta\in \Psi$. Since, in the construction of $\At$, the size but not the sign of $\vt_{n,i}$ varies when we allow $\theta_n$ to vary over $n$, ($\ref{thetasign}$) continues to hold.
\end{proof}

Lemma $\ref{vtniformula}$ describes the rotational change that occurs at each level of approximating sets. Most importantly, it describes which direction one triangular cap rotates with respect to the triangular cap in which it is constructed, namely, clockwise or anti-clockwise. This binary representation of rotation can be combined with a similar system, giving the location of a point $x\in \At$, to describe how often and how far the approximating sets rotate centered on a given point in $\At$.

\begin{Definition}\label{base2}
For $x\in [0,1]$ we write 
$$x=x_0.x_1x_2x_3...$$
to denote the base 2 representation of $x$. We use the convention that the sequence $\{x_i\}_{i=1}^{\infty}$ is the unique sequence to have infinitely many terms equal to $0$ such that 
$$x=\sum_{i=0}^{\infty}x_i2^{-i}.$$
\end{Definition}

\begin{Lemma}\label{i-1lxli}
Let $\Theta \in \Psi$, then for any $x\in [0,1]$, $\calF_{\Theta}(x)\in T_{n,i}$ if and only if 
$$i-1\leq \sum_{j=0}^{\infty}x_j2^{n-j}\leq i.$$
\end{Lemma}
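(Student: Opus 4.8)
The plan is to show that $\F1_{\Theta}(x)\in T_{n,i}$ if and only if $x$ lies in the $n$-th level dyadic interval $[d_{n,i-1},d_{n,i}]=[(i-1)2^{-n},\,i2^{-n}]$, and then to convert this condition into the displayed inequality by the elementary identity $2^{n}x=\sum_{j=0}^{\infty}x_{j}2^{\,n-j}$, which is immediate from the base-$2$ expansion of Definition~\ref{base2}. Thus the entire content is the geometric equivalence, and the arithmetic is mere bookkeeping.

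First I would record the level-by-level correspondence between dyadic intervals and triangular caps. By Definition~\ref{curlyf}, $F_{n}$ carries $[d_{n,i-1},d_{n,i}]$ linearly onto the base $A_{n,i}$ of the cap $T_{n,i}$, so $F_{n}([d_{n,i-1},d_{n,i}])\subset T_{n,i}$. The key refinement fact is that this correspondence is compatible with subdivision: the dyadic splitting
\[
[d_{n,i-1},d_{n,i}]=[d_{n+1,2i-2},d_{n+1,2i-1}]\cup[d_{n+1,2i-1},d_{n+1,2i}]
\]
matches, segment for segment, the cap splitting $T_{n,i}\supset T_{n+1,2i-1}\cup T_{n+1,2i}$ furnished by Construction~\ref{atheta}. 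Inducting on this, for every $m\ge n$ and every $x\in[d_{n,i-1},d_{n,i}]$ the level-$m$ dyadic interval containing $x$ is sent by $F_{m}$ into a cap $T_{m,i'}\subset T_{n,i}$. Since each $T_{n,i}$ is a closed triangle and $\F1_{\Theta}(x)=\lim_{m}F_{m}(x)$, this yields $\F1_{\Theta}(x)\in T_{n,i}$ whenever $x\in[d_{n,i-1},d_{n,i}]$, giving the ``if'' direction.

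For the converse, I would suppose $\F1_{\Theta}(x)\in T_{n,i}$ but $x\notin[d_{n,i-1},d_{n,i}]$. Then $x$ lies in some other level-$n$ interval $[d_{n,k-1},d_{n,k}]$ with $k\ne i$, so by the first direction $\F1_{\Theta}(x)\in T_{n,k}$ as well. By Construction~\ref{atheta} the caps $T_{n,i}$ and $T_{n,k}$ are disjoint unless $|i-k|\le 1$, in which case they meet only in a single common vertex lying in $E_{n}(\Theta)$. Since $\F1_{\Theta}$ is injective (Proposition~\ref{F1props}) and maps $D_{n}$ onto $E_{n}(\Theta)$, the point $\F1_{\Theta}(x)$ can be such a vertex only when $x\in D_{n}$. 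For $x$ interior to its interval this is impossible and we reach a contradiction; for $x$ a level-$n$ dyadic point the two caps sharing $\F1_{\Theta}(x)$ are precisely the two indices admitted by the non-strict inequalities. Hence $\F1_{\Theta}(x)\in T_{n,i}$ forces $(i-1)2^{-n}\le x\le i2^{-n}$.

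Finally, multiplying $(i-1)2^{-n}\le x\le i2^{-n}$ by $2^{n}$ and substituting $2^{n}x=\sum_{j=0}^{\infty}x_{j}2^{\,n-j}$ produces the displayed inequalities, completing the equivalence. The principal obstacle is the careful treatment of the boundary dyadic points: there $\F1_{\Theta}(x)$ genuinely lies in two adjacent caps, and one must verify that this double membership is exactly encoded by the two values of $i$ permitted by the closed inequalities; a secondary nuisance is keeping the two indexing schemes (dyadic versus cap) aligned through the refinement step of Construction~\ref{atheta}.
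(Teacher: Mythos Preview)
Your proposal is correct and follows essentially the same approach as the paper: establish $\F1_{\Theta}^{-1}(T_{n,i})=[(i-1)2^{-n},i2^{-n}]$ and then translate via the binary expansion. The paper's proof is a single sentence asserting this preimage identity ``by Definition~\ref{curlyf}'' without further justification, whereas you have expanded both directions of that equivalence --- in particular the treatment of the boundary dyadic points via injectivity from Proposition~\ref{F1props} --- which is precisely the content the paper leaves implicit.
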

\begin{proof}
By Definition $\ref{curlyf}$ $\calF_{\Theta}^{-1}(T_{n,i})=[(i-1)2^{-n},i2^{-n}]$, therefore $\calF(x)\in T_{n,i}$ if and only if 
$$(i-1)2^{-n}\leq x\leq i2^{-n}.$$ 
The result now follows with Definition $\ref{base2}$.
\end{proof}
\begin{Corollary}\label{absthetasum}
Let $\Theta \in \Psi$ and $x\in [0,1]$, then, if
$$x=x_0.x_1...x_{m_1-1}1010...10x_{m_2+1}x_{m_2+2}...$$
$$\left|\sum_{i=m_1}^{m_2}\vt_{n,i(n,\calF_{\Theta}(x))}\right|=\sum_{i=m_1}^{m_2}\theta_i.$$
\end{Corollary}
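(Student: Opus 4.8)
The plan is to reduce the whole statement to a parity computation. Write $k_n:=i(n,\calF_\Theta(x))$ for the index of the cap at level $n$ containing $\calF_\Theta(x)$, so (reading the summand at level $i$) the quantity in question is $\sum_{i=m_1}^{m_2}\vt_{i,k_i}$. By Lemma~\ref{vtniformula} each term equals $\theta_i$ when $i+k_i$ is odd and $-\theta_i$ when $i+k_i$ is even. Hence the identity $\bigl|\sum_{i=m_1}^{m_2}\vt_{i,k_i}\bigr|=\sum_{i=m_1}^{m_2}\theta_i$ will follow the instant I show that the parity of $i+k_i$ is constant for $m_1\le i\le m_2$: all terms then carry a common sign, and the absolute value of the sum equals the sum of the absolute values.

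First I would pin down $k_n$ in terms of the binary digits of $x$. Since $\sum_{j=0}^{\infty}x_j2^{n-j}=2^n x$, Lemma~\ref{i-1lxli} says $\calF_\Theta(x)\in T_{n,i}$ exactly when $i-1\le 2^n x\le i$, so $k_n=\min\{i:i-1\le 2^n x\le i\}$. Assuming $x$ has a nonzero binary digit beyond position $m_2$ (in particular whenever $x\notin D$), we have $2^n x\notin\Z$ for every $n\le m_2$, and therefore $k_n=\lfloor 2^n x\rfloor+1=1+\sum_{j=1}^{n}x_j2^{n-j}$. Reducing mod $2$ and observing that every term with $j<n$ is even gives the key congruence $k_n\equiv 1+x_n\pmod 2$, so that $i+k_i\equiv i+1+x_i\pmod 2$.

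It then remains to feed in the alternating pattern. On the block $m_1\le i\le m_2$ the digits $x_{m_1}x_{m_1+1}\dots x_{m_2}=1010\dots10$ satisfy $x_i\equiv i+m_1+1\pmod 2$, whence $i+1+x_i\equiv 2i+m_1+2\equiv m_1\pmod 2$, which is independent of $i$. Thus $i+k_i$ has the constant parity of $m_1$ throughout the block, every $\vt_{i,k_i}$ carries the single sign $(-1)^{m_1+1}$, and summing and taking absolute values yields $\sum_{i=m_1}^{m_2}\theta_i$, as claimed.

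The one genuine subtlety — and the step I expect to be the main obstacle — is the dyadic/vertex case, i.e. when $\calF_\Theta(x)\in E(\Theta)$ and $2^n x\in\Z$ for some $n$ in the range. There the min-convention of Definition~\ref{inx} replaces $k_n=1+\lfloor 2^n x\rfloor$ by $k_n=2^n x$, shifting the parity by one and possibly flipping a sign; for instance, for $x=0.1010$ in base $2$ the four levels produce rotations $+\theta_1,+\theta_2,-\theta_3,-\theta_4$, so the identity genuinely fails at such points. I would therefore state and apply the corollary only for $x\notin D$ (equivalently, for $x$ admitting a nonzero digit past $m_2$), which is precisely the regime that the applications require: the pure-unrectifiability argument (Lemma~\ref{unrect}) is run on irrational normal numbers, for which $2^n x$ is never an integer and the clean computation above applies verbatim.
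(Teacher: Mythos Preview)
Your approach coincides with the paper's: both reduce to the parity of $k_n=i(n,\calF_\Theta(x))$ via Lemma~\ref{i-1lxli} and then invoke Lemma~\ref{vtniformula} to conclude that all terms carry a common sign on the block. You are in fact more careful than the paper on one point: the paper asserts without qualification that $k_n$ is even iff $x_n=1$, whereas your observation that the min-convention of Definition~\ref{inx} breaks this at the last one or two levels when $x\in D$ (your example $x=0.1010$ is genuine), and that the restriction $x\notin D$ is both the correct fix and harmless for every application in the paper, is entirely valid.
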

\begin{proof}
For $n\in \N$, we infer from Lemma $\ref{i-1lxli}$ that $i(n,\calF_{\Theta}(x))$ is even if and only if $x_n=1$. By Lemma $\ref{vtniformula}$ it follows either that $\vt_{n,i(n,\calF_{\Theta}(x))}=\theta_n$ for $m_1\leq n\leq m_2$, or that $\vt_{n,i(n,\calF_{\Theta}(x))}=-\theta_n$ for $m_1\leq n\leq m_2$. In either case, we deduce, as required, that
$$\left|\sum_{i=m_1}^{m_2}\vt_{n,i(n,\calF_{\Theta}(x))}\right|=\sum_{i=m_1}^{m_2}\theta_i.$$
\end{proof}
\begin{Definition}\label{RMAN}
For $N,M,M_0\in \mathbb{N}$, $a_1...a_M$ a string of digits in base 2, and $N\geq M+M_0$, define
$$R(M_0,a_1...a_m, N):=\{x\in [0,1]:\exists M_0\leq M_1\leq N-M:x=x_0.x_1...x_{M_1}a_1...a_Mx_{M_1+M+1}...\}.$$
\end{Definition}
\begin{Lemma}\label{RN0andc}
For any $M, M_0\in \mathbb{N}$, any $0<c<1$, and any base 2 string of digits $\alpha:=a_1...a_M$ there is an $N_0=N_0(\alpha,M_0,c)\in\N$ such that 
$$\Hm{1}(R(M_0,\alpha,N))\geq c$$
for all $N\geq N_0$.
\end{Lemma}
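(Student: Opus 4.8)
The plan is to exploit the identity $\Hm{1}=\calL^1$ on $[0,1]$, under which the binary digits $x_1,x_2,\dots$ of Definition \ref{base2} behave like independent fair coin tosses. Observe that $R(M_0,\alpha,N)$ is precisely the set of $x$ whose binary expansion exhibits the block $\alpha=a_1\dots a_M$ somewhere inside the digit window ranging over positions $M_0+1,\dots,N$; its complement is the set of $x$ that avoid $\alpha$ throughout this window. I would bound the measure of this complement from above by restricting attention to \emph{disjoint} length-$M$ windows, where independence turns the avoidance probability into an exact product.

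Concretely, set $K:=\lfloor (N-M_0)/M\rfloor$ and, for $\ell\in\{1,\dots,K\}$, let $E_\ell\subset[0,1]$ be the set of $x$ whose digits at the positions $M_0+(\ell-1)M+1,\dots,M_0+\ell M$ equal $a_1,\dots,a_M$. Prescribing $M$ digits fixes a measurable set that is a disjoint union of dyadic intervals of total length $2^{-M}$, so $\Hm{1}(E_\ell)=2^{-M}$ for each $\ell$. Since the $E_\ell$ constrain pairwise disjoint sets of digit positions, all contained in $\{M_0+1,\dots,N\}$ by the choice of $K$, a digit-counting argument shows they are measure-theoretically independent (and so are their complements). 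Finally, if $x\in E_\ell$ then, taking $M_1:=M_0+(\ell-1)M$, one verifies $M_0\leq M_1\leq N-M$, whence $x\in R(M_0,\alpha,N)$; therefore $R(M_0,\alpha,N)\supseteq\bigcup_{\ell=1}^{K}E_\ell$.

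Independence of the complements then gives
\[
\Hm{1}\big(R(M_0,\alpha,N)\big)\ \geq\ 1-\Hm{1}\Big(\bigcap_{\ell=1}^{K}E_\ell^{c}\Big)\ =\ 1-\prod_{\ell=1}^{K}\big(1-2^{-M}\big)\ =\ 1-(1-2^{-M})^{K}.
\]
Because $0<1-2^{-M}<1$, the right-hand side tends to $1$ as $K\to\infty$, i.e.\ as $N\to\infty$. Choosing $N_0=N_0(\alpha,M_0,c)$ so large that $(1-2^{-M})^{\lfloor (N_0-M_0)/M\rfloor}\leq 1-c$ — a condition depending only on $M$, $M_0$ and $c$, and monotone in $N$ — yields $\Hm{1}(R(M_0,\alpha,N))\geq c$ for all $N\geq N_0$, as required.

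The only genuinely delicate point is the independence/product step, which rests on identifying $\Hm{1}$ with Lebesgue measure on $[0,1]$ and on the fact that fixing digits at disjoint position sets produces measure-multiplicative events. The dyadic-rational ambiguity is harmless, since such points form an $\Hm{1}$-null set and Definition \ref{base2} fixes a unique expansion in any case. Everything else is a routine estimate, and I anticipate no serious obstacle beyond stating the independence cleanly.
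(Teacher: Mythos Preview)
Your proof is correct and takes a genuinely different, more elementary route than the paper's. The paper invokes the Normal Number Theorem as a black box: almost every $x\in[0,1]$ is normal to base $2$, so the block $\alpha$ occurs infinitely often in its expansion, in particular at some position $\geq M_0$; since the sets $R(M_0,\alpha,N)$ increase in $N$ to a set containing all normal numbers, their measures tend to $1$ and the result follows. You instead bypass Borel's theorem entirely by the direct independence argument on disjoint length-$M$ windows, which is self-contained and moreover yields an explicit quantitative choice of $N_0$ via $(1-2^{-M})^{\lfloor(N_0-M_0)/M\rfloor}\leq 1-c$. The paper's version is a couple of lines shorter but outsources the real work; yours proves from scratch exactly the fragment of the Normal Number Theorem that is actually needed here.
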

\begin{proof}
By the Normal Number Theorem, see, for example, Borel \cite{borel} or Niven \cite{niven}, there is a set $S\subset [0,1]$ with $\Hm{1}(S)=1$ such that the sequence $a_1...a_M$ occurs infinitely often in the base 2 expansion of each element $x\in S$.
In particular, for each $x\in S$
$$x=x_0.x_1...x_{M_0}x_{M_0+1}...x_{Q-1}a_1...a_Mx_{O+m+1}...$$
for infinitely many and, in particular, at least one $Q\in \mathbb{N}$. It follows that 
$$S\subset \bigcup_{N=M_0+M}^{\infty}R(M_0,\alpha,N)$$
and hence
$$\lim_{N\rightarrow \infty} \Hm{1}(R(M_0,\alpha,N))=\Hm{1}(S)=1.$$
The result now follows.
\end{proof}
The fact that arbitrary sequences can almost always be found motivates the following definition of a subset of $\Psi$ which allows the transfer of sequences of digits to magnitude of rotation. Since a continual rotation, loosely speaking, prevents the existence of tangent spaces, we can then exploit continual rotation to prove that certain sets in $\calA(\Psi_{\infty})$ are not rectifiable.

\begin{Definition}\label{psir}
For $\Theta\in \Psi$ define
$$R(\Theta):=\left\{x\in [0,1]:\forall n\in \mathbb{N}, \exists m_1,m_2\geq n:\sum_{j=m_1}^{m_2}\vt_{n,i(n,\calF_{\Theta}(x))}>2\pi\right\}$$
and, for $0<r\leq 1$, define
$$\Psi_R^r:=\{\Theta\in \Psi:\Hm{1}(R(\Theta))\geq r\}.$$
\end{Definition}
\begin{Remark}
$R(\Theta)$ is in general neither open nor closed. $R(\Theta)$ is, however, always a Borel set and therefore measurable, a property that is necessary in our analysis of rectifiability below.
\end{Remark}

\begin{Proposition}\label{rmeas}
Let $\Theta \in \Psi$, then $R(\Theta)$ is a Borel set and therefore $\Hm{1}$-measurable.
\end{Proposition}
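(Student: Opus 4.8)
The plan is to realize $R(\Theta)$ explicitly as a countable Boolean combination of superlevel sets of Borel functions on $[0,1]$, and then to invoke the standard fact that, $\Hm{1}$ being a metric (Carath\'eodory) outer measure, every Borel subset of $\R$ is $\Hm{1}$-measurable. So the only real work is to check measurability of the integrand at each approximation level.

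First I would fix a level $n\in\N$ and show that the map $x\mapsto \vt_{n,i(n,\calF_{\Theta}(x))}$ is a Borel step function on $[0,1]$. By Lemma~\ref{i-1lxli}, $\calF_{\Theta}(x)\in T_{n,i}$ holds exactly when $(i-1)2^{-n}\leq x\leq i2^{-n}$; combining this with the min-convention of Definition~\ref{inx} (which selects the smaller index at the dyadic breakpoints, where $\calF_{\Theta}(x)\in E(\Theta)$ lies in two consecutive triangles) shows that $x\mapsto i(n,\calF_{\Theta}(x))$ is constant on each half-open dyadic interval $((i-1)2^{-n},\,i2^{-n}]$ for $i\in\{1,\dots,2^n\}$, and equal to $1$ at $x=0$. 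Hence it is a finite-valued step function whose level sets are dyadic intervals, in particular Borel. Composing with the map $i\mapsto \vt_{n,i}$, which by Lemma~\ref{vtniformula} takes only the two values $\pm\theta_n$ according to the parity of $n+i$, yields that $x\mapsto \vt_{n,i(n,\calF_{\Theta}(x))}$ is Borel measurable for every level $n$, again constant on the $2^n$ dyadic intervals of that level.

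With the single-level summands in hand, the partial rotation sums $S_{m_1,m_2}(x):=\sum_{j=m_1}^{m_2}\vt_{j,i(j,\calF_{\Theta}(x))}$ are, for each fixed pair $m_1\leq m_2$, finite sums of the Borel functions produced above (applied with level $j$) and hence Borel; therefore every superlevel set $\{x\in[0,1]:S_{m_1,m_2}(x)>2\pi\}$ is a Borel subset of $[0,1]$. Reading the quantifiers of Definition~\ref{psir} directly, I would then write
$$R(\Theta)=\bigcap_{n=1}^{\infty}\ \bigcup_{m_1=n}^{\infty}\ \bigcup_{m_2=m_1}^{\infty}\ \bigl\{x\in[0,1]:S_{m_1,m_2}(x)>2\pi\bigr\},$$
which presents $R(\Theta)$ as a countable intersection of countable unions of Borel sets. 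Stability of the Borel $\sigma$-algebra under countable operations gives that $R(\Theta)$ is Borel, and Carath\'eodory's criterion for the metric outer measure $\Hm{1}$ then yields $\Hm{1}$-measurability.

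I do not expect a serious obstacle here: the whole content is the measurability of the single-level functions, and the only genuinely delicate point is the behaviour at the dyadic endpoints, where a corner point of $E(\Theta)$ belongs to two triangles and the $\min$ of Definition~\ref{inx} is invoked. Pinning down the precise half-open intervals on which $i(n,\calF_{\Theta}(x))$ is constant disposes of this, after which everything reduces to the routine stability of the Borel class under countable set operations.
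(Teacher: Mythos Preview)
Your proof is correct and follows essentially the same approach as the paper's: both arguments exhibit $R(\Theta)$ as a countable Boolean combination of sets which are finite unions of half-open dyadic intervals. The paper packages the inner unions as $R(\Theta,n,k):=\{x:\exists\, n\le m_1,m_2\le k\text{ with }S_{m_1,m_2}(x)>2\pi\}$ and observes directly that each $R(\Theta,n,k)$ is such a union, whereas you first verify that each single-level map $x\mapsto\vt_{j,i(j,\calF_\Theta(x))}$ is a dyadic step function and then pass to the superlevel sets of the finite sums $S_{m_1,m_2}$; the resulting decompositions $\bigcap_n\bigcup_{k>n}R(\Theta,n,k)$ and $\bigcap_n\bigcup_{m_1\ge n}\bigcup_{m_2\ge m_1}\{S_{m_1,m_2}>2\pi\}$ are the same.
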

\begin{proof}
For $k> n$ let
$$R(\Theta,n,k):=\left\{x\in [0,1]:\exists n\leq m_1,m_2\leq k:\sum_{j=m_1}^{m_2}\vt_{n,i(n,\calF_{\Theta}(x))}>2\pi\right\}.$$
We see that 
\begin{equation}\label{runion}
R(\Theta)= \bigcap_{n\in \N}\bigcup_{k>n}R(\Theta,n,k).
\end{equation}
By Definition $\ref{inx}$ and Lemma $\ref{i-1lxli}$,  $ R(\Theta,n,k)\cap [j2^{-k},(j+1)2^{-k})\not=\emptyset$ only if 
$[j2^{-k},(j+1)2^{-k}) \subset R(\Theta,n,k)$. It follows that for each $n\in \N$ and $k>n$
$$R(\Theta,n,k) = \bigcup_{j\in I}[j2^{-k},(j+1)2^{-k})$$
for some $I \subset \{0,1,...,2^k-1\}$. We deduce, for each $n\in \N$ and $k>n$, that $R(\Theta,n,k)$ is a Borel set and therefore, by ($\ref{runion}$), that $R(\Theta)$ is a Borel, and thus $\Hm{1}$-measurable, set. 
\end{proof}
In proving our non-rectifiability results we use some standard characterisations of rectifiability which we now recall for reference. Proofs can be found, for example, in \cite{mattila} and \cite{simon1}. 

\begin{Theorem}\label{tspace}
For a $\Hm{1}$-measurable set $A\subset\R^2$ the following conditions are equivalent:
\begin{enumerate}[(i)]
\item $A$ is $1$-rectifiable,
\item For $\Hm{1}$-almost all $x\in A$ there is a unique approximate tangent $1$-plane for $A$ at $x$. That is ,there is a unique $V\in G(2,1)$ such that, for all $0<s<1$,
$$\lim_{r\rightarrow 0}r^{-1}\Hm{1}((A\cap B_r(x))\sim C(x,V,s))=0.$$
\item There is a positive locally $\Hm{1}$-integrable function, $\theta$, on $A$  with respect to which a unique approximate tangent space, $T_xA$, exists for $\Hm{1}$-almost all $x\in A$. That is, there exists a unique $P\in G(2,1)$ such that 
$$\lim_{\l\rightarrow 0}\int_{\eta_{x,\l}A}\psi d\Hm{1}=\theta(x)\int_P\psi d\Hm{1}$$
for all $\psi \in C_C^0(\R^2)$, where $\eta_{x,\l}B:=\l^{-1}(B-x)$ for all $B\subset \R^2$.
\end{enumerate}

\end{Theorem}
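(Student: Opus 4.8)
The plan is to prove the two nontrivial equivalences through the cycle $(i)\Rightarrow(iii)\Rightarrow(ii)\Rightarrow(i)$, where the first two implications are routine consequences of Rademacher's theorem and the definition of weak convergence of measures, and the last is the genuinely hard step, resting on the Besicovitch--Federer structure theory for purely unrectifiable sets. Throughout I would treat $A$ as $\Hm{1}$-measurable with $\sigma$-finite $\Hm{1}$ measure, which is implicit once the tangent conditions are posed, since the quantities $r^{-1}\Hm{1}(A\cap B_r(x))$ must be finite for small $r$ on the relevant set.

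First I would establish $(i)\Rightarrow(iii)$. Assuming $A$ is $1$-rectifiable, write $A = N\cup\bigcup_i \Gamma_i$ up to an $\Hm{1}$-null set, where $N$ is $\Hm{1}$-null and the $\Gamma_i$ are pairwise disjoint Borel pieces of the given Lipschitz images (disjointify and discard overlaps). By Rademacher's theorem each defining Lipschitz map is differentiable $\Hm{1}$-a.e., so each $\Gamma_i$ carries an approximate tangent line $V_i(x)$ at $\Hm{1}$-a.e. $x$, and the density $\theta(x):=\lim_{r\to 0}(2r)^{-1}\Hm{1}(A\cap B_r(x))$ exists and is positive $\Hm{1}$-a.e. on $A$. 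At $\Hm{1}$-a.e. $x\in\Gamma_i$ the point $x$ is a density point of $\Gamma_i$ relative to $A$, so after rescaling by $\eta_{x,\l}$ the remaining pieces contribute negligibly and $\int_{\eta_{x,\l}A}\psi\,d\Hm{1}\to \theta(x)\int_{V_i(x)}\psi\,d\Hm{1}$ for every $\psi\in C_C^0(\R^2)$; this is precisely $(iii)$ with $P=V_i(x)$.

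For $(iii)\Rightarrow(ii)$ I would unpack the weak convergence. Fix $0<s<1$. Since the cone is scale invariant about its apex, $\eta_{x,\l}(C(x,P,s))=C(0,P,s)$, and the line $P$ meets $B_1(0)\sim C(0,P,s)$ only at the origin, a set of $\Hm{1}$-measure zero. Testing the convergence in $(iii)$ against functions approximating the indicator of $B_1(0)\sim C(0,P,s)$ therefore yields $r^{-1}\Hm{1}((A\cap B_r(x))\sim C(x,P,s))\to 0$, which is the cone-exclusion limit in $(ii)$ with $V=P$; and since $\theta(x)>0$ forces the limit measure to be supported exactly on $P$, any admissible $V$ must equal $P$, giving uniqueness.

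The crux is $(ii)\Rightarrow(i)$. Here I would decompose $A=A_r\cup A_u$ into its $1$-rectifiable and purely $1$-unrectifiable parts, legitimate by $\sigma$-finiteness, and show $\Hm{1}(A_u)=0$. By the Besicovitch--Federer projection theorem, the orthogonal projection $\pi_V(A_u)$ has $\calL^1$-measure zero for $\Hm{1}$-a.e. direction $V\in G(2,1)$. The standard consequence is that $A_u$ admits no approximate tangent line at $\Hm{1}$-a.e. point: a cone condition as in $(ii)$ on a subset of positive measure would, at points of positive lower density, force $\pi_V(A_u)$ to carry positive $\calL^1$-measure for a positive-measure set of directions $V$, contradicting the projection theorem. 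Hence $(ii)$ forces $\Hm{1}(A_u)=0$, so $A$ agrees with $A_r$ up to a null set and is $1$-rectifiable. \emph{The main obstacle is exactly this implication}: reducing the a.e.\ nonexistence of approximate tangents for purely unrectifiable sets to the projection theorem, while handling the measurability of the tangent-direction field and the a.e.\ positivity of density that makes the cone condition meaningful.
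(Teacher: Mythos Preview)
The paper does not prove this theorem at all: immediately before the statement it says ``Proofs can be found, for example, in \cite{mattila} and \cite{simon1}'' and then simply uses the result. So there is no paper proof to compare against; your proposal is an independent sketch of the standard argument in those references.

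Your outline is essentially correct and follows the usual route. Two remarks. First, the passage from condition~(ii) on $A$ to the same condition on the purely unrectifiable part $A_u$ needs the density fact that at $\Hm{1}$-a.e.\ $x\in A_u$ the rectifiable part $A_r$ has density zero, so that the cone-exclusion limit for $A$ at $x$ transfers to $A_u$; you use this implicitly but it deserves a sentence. Second, for $(ii)\Rightarrow(i)$ you invoke the Besicovitch--Federer projection theorem and then argue that approximate tangents would force nontrivial projections. This works, but it is heavier machinery than needed and the bridging step (``force $\pi_V(A_u)$ to carry positive $\calL^1$-measure'') is where the real content hides: to make it rigorous you end up showing that a set with an approximate tangent $V$ at a point of positive lower density contains, near that point, a subset of positive $\Hm{1}$-measure lying in an arbitrarily narrow cone about $V$, hence a piece of a Lipschitz graph over $V$. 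Once you have that, pure unrectifiability is contradicted directly and Besicovitch--Federer is no longer required. This direct cone-to-Lipschitz-graph extraction is in fact how Mattila and Simon present the implication, so your detour through the projection theorem is valid but circuitous.
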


\begin{Lemma}\label{unrect}
Let $\Theta\in \Psi$ and $\Hm{1}(R(\Theta))>0$. Then $\calF_{\Theta}(R(\Theta))$ is purely unrectifiable.
\end{Lemma}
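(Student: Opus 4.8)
The plan is to show that $\calF_{\Theta}(R(\Theta))$ contains no piece of positive measure on which an approximate tangent line can exist, using the characterisation of rectifiability in Theorem~\ref{tspace}(ii). The geometric heart of the matter is that the set $R(\Theta)$ is designed precisely so that, for $x\in R(\Theta)$, the approximating triangular caps $T_{n,i(n,\calF_\Theta(x))}$ containing $\calF_\Theta(x)$ keep rotating by a total angle exceeding $2\pi$ at arbitrarily small scales. By Corollary~\ref{absthetasum}, the defining condition $\sum_{j=m_1}^{m_2}\vt_{n,i(n,\calF_\Theta(x))}>2\pi$ means the directions $G_{n,i(n,x)}$ of the caps sweep through a full revolution infinitely often as $n\to\infty$. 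The intuition is that no single line $V$ can serve as an approximate tangent at such a point, because the mass of $\calF_\Theta(R(\Theta))$ near $\calF_\Theta(x)$ aligns, at different scales, with lines pointing in every direction, so it cannot concentrate inside any fixed cone complement $C(x,V,s)^c$.

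First I would argue by contradiction: suppose $\calF_\Theta(R(\Theta))$ is \emph{not} purely unrectifiable, so by definition it contains a $1$-rectifiable subset $F$ with $\Hm{1}(F)>0$. Since $\calF_\Theta$ is a bicontinuous bijection (Proposition~\ref{F1props}) and, by Proposition~\ref{HF1infinite}, pushes positive measure forward to positive measure, the preimage considerations let me reduce to studying a set of positive measure inside $R(\Theta)$. By Theorem~\ref{tspace}(ii), at $\Hm{1}$-almost every point $w=\calF_\Theta(x)\in F$ there is a unique approximate tangent line $V=V_w\in G(2,1)$: for every $0<s<1$,
$$\lim_{r\to 0}r^{-1}\Hm{1}\bigl((F\cap B_r(w))\sim C(w,V,s)\bigr)=0.$$
The goal is to derive a contradiction at such a good point $w$ whose parameter $x=\calF_\Theta^{-1}(w)$ lies in $R(\Theta)$.

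The key step is a \emph{lower} mass bound transverse to any fixed line. Fix a candidate tangent $V$ and a small $s$. Using the definition of $R(\Theta)$, choose $n$ large (so the scale $r\approx \Hm{1}(A_{n,i})$ is small) and $m_1,m_2\geq n$ witnessing a total rotation exceeding $2\pi$; then among the caps $T_{m,i(m,x)}$ for $m_1\le m\le m_2$ there is some level $m^\ast$ whose direction $G_{m^\ast,i(m^\ast,x)}$ makes an angle with $V$ bounded away from $0$ and $\pi$ (a full revolution cannot avoid the transverse sector). The nondegeneracy control from Lemma~\ref{lem7} — that $\calF_\Theta(R_{n,i})$ meets only the cap $T_{n,i}$ and its two neighbours, and these do not spiral too tightly — together with the length estimates of Lemma~\ref{lem5}, shows that the portion of $\calF_\Theta(R(\Theta))$ carried by $T_{m^\ast,i(m^\ast,x)}\cap B_r(w)$ has $\Hm{1}$-measure comparable to $r$ and lies essentially along $G_{m^\ast,i(m^\ast,x)}$, hence \emph{outside} the cone $C(w,V,s)$ for suitable $s$. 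This contributes a definite fraction of $r$ to $\Hm{1}((F\cap B_r(w))\sim C(w,V,s))$ at a sequence of radii $r\to 0$, contradicting the vanishing of the limit above. Since $V$ was arbitrary, no approximate tangent exists at $w$, so $F$ cannot be rectifiable of positive measure.

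The main obstacle I anticipate is the transfer between the abstract rotation count in $R(\Theta)$ and a genuine measure-theoretic lower bound for $F$ (not merely for $\At$) inside the transverse cone. Two technical points need care. First, $F$ is an arbitrary rectifiable subset, so I must ensure the rotating caps actually carry a fixed fraction of the \emph{relative} measure of $F$ near $w$, not just of $\At$; here the almost-everywhere density and approximate-tangent properties of the rectifiable $F$ must be played against the rigid self-similar geometry, and one exploits that $\Hm{1}$-a.e.\ point of $F$ is a Lebesgue density point so that $F$ fills a positive fraction of the cap. Second, I must quantify, via Lemma~\ref{absthetasum} and the non-tight-spiraling estimate of Lemma~\ref{lem7}, that a rotation summing past $2\pi$ forces a cap direction into the transverse sector by a definite angular margin depending only on $s$; this is where the bound $\theta_1<\pi/24$ and the monotonicity of $\{\theta_n\}$ enter to keep the geometry controlled. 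Once these quantitative versions are in hand, the contradiction with Theorem~\ref{tspace}(ii) closes the argument.
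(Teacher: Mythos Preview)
Your overall strategy matches the paper's: assume a rectifiable piece $F\subset\Ft(R(\Theta))$ of positive finite measure, pick a point $w=\Ft(x)$ with $x\in R(\Theta)$ at which the tangent characterisations of Theorem~\ref{tspace} hold, use the rotation condition to find a scale at which the cap direction $G_{m_0,i(m_0,x)}$ is transverse to the candidate tangent $V_w$, and derive a contradiction via the containment from Lemma~\ref{lem7}.

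The one place where your outline is looser than the paper is precisely the obstacle you flag: getting a \emph{lower} bound on $\Hm{1}(F\cap\cdot)$ in the transverse direction. Your plan invokes ``Lebesgue density so that $F$ fills a positive fraction of the cap,'' but density of $F$ at $w$ only controls $\Hm{1}(F\cap B_r(w))$, not how that mass is distributed within $B_r(w)$; there is no direct reason $F$ should meet a \emph{specific} subcap in a definite fraction. The paper sidesteps this neatly by using \emph{both} parts (ii) and (iii) of Theorem~\ref{tspace}: existence of the approximate tangent space with multiplicity $\theta(x)$ gives the annular lower bound $\Hm{1}((F\cap B_r(w))\setminus B_{r/2}(w))>\theta r/2$ for small $r$, while (ii) gives the upper bound $\Hm{1}((F\cap B_r(w))\setminus C(w,V_w,\tan(\pi/8)))<\theta r/2$. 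Choosing $r\sim\Hm{1}(A_{m_0,i(m_0,x)})$ at the transverse level, Lemma~\ref{lem7} forces the entire annulus $(\At\cap B_r(w))\setminus B_{r/2}(w)$ into a cone $C(w,G_{m_0,i(m_0,x)}+w,\pi/4)$ disjoint from $C(w,V_w,\tan(\pi/8))$, and the two inequalities collide. So the paper never needs to argue that $F$ fills any particular cap; it only needs that the annular mass of $F$ is geometrically trapped in the transverse cone. Your scheme becomes correct once you replace the ``fills the cap'' heuristic with this annulus argument (or, equivalently, combine density~$1$ at two radii $r$ and $r/2$ to get annular mass $\sim r$).
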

\begin{proof}
By Proposition $\ref{HF1infinite}$, $\Hm{1}(\Ft(R(\Theta)))>0$. Now, suppose that $E\subset \Ft(R(\Theta))$ is rectifiable and that $\Hm{1}(E)>0$. As $E$ is rectifiable, we can write
$$E\subset M_0\cup \bigcup_{i=1}^{\infty}f_i(\R)$$
for Lipschitz functions $f_1$ and $\Hm{1}(M_0)=0$. We can therefore find $j\in \N$ and $M>0$ such that 
$$\Hm{1}(E\cap f_j([-M,M])>0.$$ 
Setting $E\subset F:= \overline{F_j([-M,M])}\cap \calF(R(\Theta))$ we see, by Proposition $\ref{rmeas}$, that $F$ is a $1$-rectifiable measurable set satisfying
$$0<\Hm{1}(F)<2MLipf_j <\infty \hbox{ and } F_1\subset \calF(R(\Theta)).$$
Now, by Theorem $\ref{tspace}$, for $\Hm{1}$-almost all $x\in F$
\begin{enumerate}[(i)]
\item There is a unique approximate tangent space $T_xF$ of $F$ at $x$, and 
\item There is a unique approximate tangent $1$-plane for $F$ at $x$.
\end{enumerate}
Since $\Hm{1}(F)>0$ we can choose $x\in F$ such that (i) and (ii) hold. Let $P_x$ and $V_x$ be the approximate tangent space and approximate tangent $1$-planes for $F$ at $x$ respectively.

From (i) it follows that 
\begin{equation}\label{unrect1}
\Hm{1}((F\cap B_{\rho}(x))\sim B_{\rho/2}(x))>\frac{\theta\rho}{2}
\end{equation}
for all sufficiently small $\rho$, say $0<\rho\leq \rho_0<1$. From (ii) we deduce that for all sufficiently small radii, say $0<\rho\leq \rho_1\leq \rho_0$,
\begin{equation}\label{unrect2}
\Hm{1}((F\cap B_{\rho}(x))\sim C(x,V_x,tan(\pi/8))<\frac{\theta\rho}{2}.
\end{equation}
Let $n_0\in \N$ be such that $\Hm{1}(A_{n_0,i(n_0,x)})<\rho_1$. Since $\Ft^{-1}(x)\in R(\Theta)$ there are $m_1, m_2>n_0$ such that 
$$\sum_{n=m_1}^{m_2}\vt_{n,i(n,x)}>2\pi.$$
Since $\theta<\pi/24$ for each $n\in\N$ there is an $m_0\geq n_0$ such that $V_x$ and $G_x:=G_{m_0,i(m_0,x)}$ meet at an angle of $\vt_m\in(\pi/2-\pi/24,\pi/2+\pi/24)$. (In our present case it is irrelevant which of the angles between $V_x$ and $G_x$ is used.) For $r\in (\Hm{1}(A_{m_0,i(m_0,x)})/2,\Hm{1}(A_{m_0,i(m_0,x)}))$ we note that $B_r(x)\subset R_{m_0,i(m_0,x)}$ and thus, with Lemma $\ref{lem7}$ parts (i) and (ii), we can easily calculate that
\begin{equation}\label{unrect3}
(F\cap B_r(x))\sim B_{r/2}(x))\subset (\At\cap B_r(x))\sim B_{r/2}(x) \subset C(x,G_x+x,\pi/4).
\end{equation}
Noting that $r<\Hm{1}(A_{m_0,i(m_0,x)}) < \rho_1$ and $C(x,V_x,tan(\pi/8))\cap C(x,G_x+x,\pi/4)=\{x\}$ we deduce from ($\ref{unrect1}$) and ($\ref{unrect3}$) that
\begin{eqnarray}
\Hm{1}((F\cap B_{\rho_2}(x)) \sim C(x,V_x,tan(\pi/8))) 
 \geq  \Hm{1}(((F\cap B_{\rho_2}(x))\sim B_{\rho_2/2}(x))\sim\{x\})
> \frac{\theta\rho_2}{2}, \nonumber
\end{eqnarray}
which contradicts ($\ref{unrect2}$). We deduce that $F$ is not rectifiable, contradicting the selection of $F$.
\end{proof}
Lemma $\ref{unrect}$ completes the technical analysis of the sets in $\calA(\Psi)$. In order to give proofs of our main theorems, though, we need show that appropriate specific examples can be selected. That is, we need to show that there are $\Theta\in \Psi$ that are simultaneously elements of all necessary subfamilies of $\Psi$. In particular, we need to show that $\Psi_c\subset \Psi_{\infty}\cap \Psi_R^1$ and that $\Psi_0\cap \Psi_{\infty}\cap \Psi_R^1\not=\emptyset$.

\begin{Lemma}\label{tripleintersection}
$$\Psi_0\cap \Psi_{\infty}\cap \Psi_R^1\not=\emptyset.$$
\end{Lemma}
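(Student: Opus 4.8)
The plan is to exhibit a single sequence $\Theta$ in the triple intersection, and the key device is to take $\Theta$ \emph{piecewise constant} on very long blocks of indices. Partition $\N$ into consecutive intervals $I_1,I_2,\dots$ with $I_k$ of length $\ell_k$, fix a strictly decreasing null sequence $\phi_k\searrow 0$ with $\phi_1<\pi/24$, and set $\theta_n:=\phi_k$ for $n\in I_k$. Then $\Theta=\{\theta_n\}$ is non-increasing with $0<\theta_n\le\theta_1=\phi_1<\pi/24$, so $\Theta\in\Psi$, and $\theta_n\to 0$, so $\Theta\in\Psi_0$. Since the $\ell_k$ will be chosen enormous, $\Psi_\infty$ comes for free, because $\sum_n\theta_n^2=\sum_k\ell_k\phi_k^2=\infty$. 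The whole content is therefore to arrange $\Hm{1}(R(\Theta))=1$, i.e. $\Theta\in\Psi_R^1$.

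First I would translate the rotation condition into a statement about binary digits. Under Lebesgue measure the digits $\{x_j\}$ of $x\in[0,1]$ are independent fair coin flips, and by Lemma $\ref{i-1lxli}$ the index $i(j,\calF_\Theta(x))$ is even exactly when $x_j=1$; feeding this into the sign rule of Lemma $\ref{vtniformula}$ shows that $\vt_{j,i(j,\calF_\Theta(x))}=+\theta_j$ for every $j$ in a window $W$ if and only if $x$ agrees on $W$ with the fixed alternating sequence $u_j:=(j+1)\bmod 2$; in that case every term equals $+\theta_j$, in agreement with the magnitude $\sum_{j\in W}\theta_j$ furnished by Corollary $\ref{absthetasum}$. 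Choosing $L_k:=\lceil 2\pi/\phi_k\rceil+1$, any length-$L_k$ window $W\subset I_k$ on which $x$ matches $u$ then produces $\sum_{j\in W}\vt_{j,i(j,\calF_\Theta(x))}=L_k\phi_k>2\pi$.

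The remaining step is a Borel--Cantelli argument ensuring such matched windows occur with $\min W\to\infty$ for almost every $x$. Tile $I_k$ by $\lfloor\ell_k/L_k\rfloor$ disjoint windows of length $L_k$; the events ``$x$ matches $u$ on a given window'' are independent, each of probability $2^{-L_k}$, so letting $E_k$ be the event that $x$ matches $u$ on at least one of them, $\mathbf{P}(E_k)\ge 1-(1-2^{-L_k})^{\lfloor\ell_k/L_k\rfloor}$. Now fix $\ell_k\ge L_k 2^{L_k}$ (this is the promised enormous length); then $\mathbf{P}(E_k)\ge 1-e^{-1}$ for all large $k$, whence $\sum_k\mathbf{P}(E_k)=\infty$. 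Since distinct $E_k$ depend on disjoint blocks of digits they are independent, and the second Borel--Cantelli lemma gives $x\in E_k$ for infinitely many $k$, for almost every $x$. Each such $k$ supplies indices $m_1\le m_2$ in $I_k$ with $m_1\ge\min I_k$ and $\sum_{j=m_1}^{m_2}\vt_{j,i(j,\calF_\Theta(x))}>2\pi$; as $\min I_k\to\infty$, the defining condition of $R(\Theta)$ in Definition $\ref{psir}$ holds for almost every $x$. Hence $\Hm{1}(R(\Theta))=1$, so $\Theta\in\Psi_R^1$, and $\Theta$ lies in $\Psi_0\cap\Psi_\infty\cap\Psi_R^1$.

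The main obstacle is the genuine tension between $\Psi_0$ and $\Psi_R^1$: membership in $\Psi_0$ forces $\theta_n\to 0$, so blocks starting later must be longer to accumulate a full turn, yet a prescribed alternating pattern of length $L$ has probability only $2^{-L}$ and so typically first appears exponentially far out, precisely where $\theta$ is even smaller. Freezing $\theta\equiv\phi_k$ across each window $I_k$ dissolves this circularity, since inside $I_k$ we are in the constant-angle regime where a \emph{fixed} length $L_k$ suffices; the decay of $\theta$ is deferred to the rare transitions between windows, and taking $\ell_k\ge L_k 2^{L_k}$ buys enough room for the pattern to appear. The one further point needing care is the sign in Definition $\ref{psir}$: a misaligned alternating block yields $-2\pi$ rather than $+2\pi$, so one must match the specific parity-phased sequence $u_j=(j+1)\bmod 2$; because the tiling windows sit at explicit positions, the correct phase is simply prescribed.
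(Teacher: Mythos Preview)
Your proof is correct and shares the paper's core strategy---take $\Theta$ piecewise constant on long blocks so that the alternating digit pattern guaranteeing a full turn appears within each block---but the probabilistic engine differs. The paper invokes the Normal Number Theorem (via Lemma~\ref{RN0andc}) to assert non-constructively that each pattern $\alpha_n$ appears in $[P_{n-1},P_n]$ on a set of measure $\ge 1-2^{-n}$, and then takes the $\limsup$. You bypass the Normal Number Theorem entirely: the explicit choice $\ell_k\ge L_k 2^{L_k}$, together with independence of disjoint digit windows, gives $\mathbf{P}(E_k)\ge 1-e^{-1}$ directly, and the second Borel--Cantelli lemma finishes. This is more elementary and fully constructive; it also lets you handle $\Psi_\infty$ in the same stroke via $\sum\ell_k\phi_k^2=\infty$, whereas the paper builds $\Psi_0\cap\Psi_\infty$ first and then refines to $\Psi_R^1$ in a second pass.

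One minor slip: your phase $u_j=(j+1)\bmod 2$ actually yields $\vt_{j,i(j,\cdot)}=-\theta_j$, not $+\theta_j$. With $u_j=1$ for $j$ even, $i(j,x)$ is even, so $j+i(j,x)$ is even and Lemma~\ref{vtniformula} gives the negative sign; the correct phase is $u_j=j\bmod 2$. (The paper is itself loose here---its proof via Corollary~\ref{absthetasum} only secures $|\sum|>2\pi$, whereas Definition~\ref{psir} asks for $\sum>2\pi$---so your attention to the sign is well-placed, just with the parity swapped.)
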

\begin{proof}
We note that $\{\pi/24n\}_{n\in \N}\in \Psi_0$ so that $\Psi_0\not=\emptyset$. 

Now let $\Theta:=\{\theta_n^{\Theta}\}_{n\in \N}\in \Psi_0$. For each $n\in \N$, $(cos\theta_n^{\Theta})^{-1}>1$ so that there is a $p_n\in \N$ satisfying $(cos\theta_n^{\Theta})^{-p_n}>2.$ Define

\begin{equation}
\theta_n:=
\begin{cases}
\theta_1^{\Theta} \ \ 1\leq n \leq P_1, \cr
\theta_i^{\Theta} \ \ \sum_{j=1}^{i-1}P_j<n\leq \sum_{j=1}^iP_j.
\end{cases}
\end{equation}
We observe that $\{\theta_n\}$ is non-increasing,
$$\sup_{n\in \N}\theta_n=\sup_{n\in \N}\theta_n^{\Theta}\leq \frac{\pi}{24},\ \ \lim_{n\rightarrow \infty}\theta_n=\lim_{\rightarrow \infty}\theta_n^{\Theta}=0,$$
and calculate
$$\prod_{n=1}^{\infty}(cos\theta_n)^{-1}=\prod_{n=1}^{\infty}\prod_{j=1}^{p_n}(cos\theta_n^{\Theta})^{-1}>\prod_{n=1}^{\infty}2=\infty.$$
We deduce that $\Theta\in \Psi_{\infty}$ and thus $\Theta\in \Psi_0\cap \Psi_{\infty}$.

Take now $\Phi:=\{\theta_n^{\Phi}\}_{n\in \N} \in \Psi_0\cap \Psi_{\infty}$
For each $n\in \N$, define $M_n$ to be the smallest even integer satisfying $M_n\theta_n^{\Phi}>2\pi$. Define $\alpha_n:=101010...10$ to be the base 2 string of digits of length $M_n$ alternating between 1 and 0. Set
$$P_1=:N_0(\alpha_1,M_0,2^{-1})$$
where $N_0(\cdot ,\cdot ,\cdot )$ is as defined in Lemma $\ref{RN0andc}$. More generally, for $n\in \N$, $n>1$, set
$$P_n:=N_0(\alpha_n,P_{n-1},1-2^{-n})$$
and define
\begin{equation}
\phi_n:=
\begin{cases}
\theta_1^{\Phi} \ \ 1\leq n\leq P_1 \cr
\theta_i^{\Phi} \ \ P_{i-1}< n\leq P_i,
\end{cases}
\end{equation}
and $\Phi_0:=\{\phi_n\}_{n\in\N}$. We show that $\Theta \in \Psi_0\cap \Psi_{\infty}\cap \Psi_R^{1}$.

As $\Theta \in \Psi_0\cap \Psi_{\infty}$, it is clear, from its definition, that $\Phi_0$ is a non-increasing sequence of real numbers smaller than $\pi/24$ with
$$\sup_{n\in \N}\phi_n=\sup_{n\in \N}\theta_n^{\Phi}<\frac{\pi}{24},\ \  \lim_{n\rightarrow \infty}\phi_n=\lim_{n\rightarrow \infty}\theta_n^{\Phi}=0$$
and that, since $\phi_n\geq \theta_n^{\Phi}$ for each $n\in \N$,
$$\prod_{n=1}^n(cos\phi_n)^{-1}\geq \prod_{n=1}^{\infty}(cos\theta_n^{\Phi})^{-1}=\infty$$
so that $\Phi\in \Psi_{\infty}$.

Now, for each $n\in \N$ let $R_n:=R(P_{n-1},\alpha_n,P_n)$ and define 
$$Q:=\bigcap_{k=1}^{\infty}\bigcup_{n=k}^{\infty}R_n.$$
By the selection of $P_n$ for $n\in \N$, we infer from Lemma $\ref{RN0andc}$ that $\Hm{1}(R_n)\geq 1-2^{-n}$ and thus 
\begin{equation}\label{3cap1}
\Hm{1}(Q)=1.
\end{equation}
Now, let $x\in Q$ and $m_0\in \N$. Then there is an $n_0\in \N$ with $P_{n-1}>m_0$ for all $n\geq n_0$. Since $x\in Q$, $x\in R_{n_1}$ for some $n_1\geq n_0$. By the selection of $P_n$ and $R_n$ for $n\in \N$, it follows that there is an $m_1>m_0$ with 
$$x=x_0.x_1...x_{m_1}10...10x_{m_1+M_n+1}... ,$$
where the central $1010...10$ is $\alpha_n$, and $\phi_i=\theta_n^{\Phi}$ for $m_1+1\leq i\leq m_1+M_n$. By Corollary $\ref{absthetasum}$ we now have
$$\left|\sum_{m=m_1+1}^{m_1+M_n}\vt_{m,i(m,\calF_{\Phi_0}(x))}\right|=\sum_{m=m_1+1}^{m_1+M_n}\theta_m^{\Phi}=M_n\theta_n^{\Phi}>2\pi$$
and thus, together with ($\ref{3cap1}$), $\Theta \in \Psi_R^1$.
\end{proof}	

\begin{Lemma}\label{pcsubp0pinfty}
$$\Psi_c\subset \Psi_{\infty}\cap \Psi_R^1.$$
\end{Lemma}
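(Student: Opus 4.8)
The plan is to prove the two inclusions $\Psi_c \subseteq \Psi_{\infty}$ and $\Psi_c \subseteq \Psi_R^1$ separately. Throughout, fix $\Theta = \{\theta_n\} \in \Psi_c$, so that $\theta_n = \theta_1 =: \theta$ for all $n$, with $0 < \theta < \pi/24$. The first inclusion is immediate: since $\cos\theta \in (0,1)$ we have
\[\prod_{i=1}^n (\cos\theta_i)^{-1} = (\cos\theta)^{-n} \longrightarrow \infty,\]
so $\Theta \in \Psi_{\infty}$ directly from Definition $\ref{psiinfty}$. The content is therefore the claim $\Hm{1}(R(\Theta)) = 1$, which I would obtain by transcribing the final part of Lemma $\ref{tripleintersection}$, simplified by the fact that here $\theta$ is constant.

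For this, first fix an even integer $M$ with $M\theta > 2\pi$ (possible since $\theta>0$) and let $\alpha := \underbrace{1010\cdots10}_{M}$ be the alternating string of length $M$. The mechanism is Corollary $\ref{absthetasum}$: whenever $\alpha$ occurs in the binary expansion of $x$ beginning at a position $m_1$ and ending at $m_2 = m_1+M-1$, the associated partial sum of rotation angles has magnitude $\sum_{i=m_1}^{m_2}\theta_i = M\theta > 2\pi$, so a single occurrence of $\alpha$ at an arbitrarily late position already realises the rotation demanded in Definition $\ref{psir}$. It thus suffices to show that $\Hm{1}$-almost every $x$ contains $\alpha$ at positions exceeding every bound. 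Because $\theta$ is constant, a single string $\alpha$ works at every stage, so I would set $P_n := N_0(\alpha, n, 1-2^{-n})$ via Lemma $\ref{RN0andc}$ and $R_n := R(n,\alpha,P_n)$, giving $\Hm{1}(R_n) \geq 1 - 2^{-n}$, with every $x \in R_n$ containing $\alpha$ beginning at some position $\geq n$. Writing $Q := \bigcap_{k}\bigcup_{n\geq k} R_n$, the sets $\bigcup_{n\geq k}R_n$ decrease in $k$ with measure at least $1-2^{-k}$, so continuity from above gives $\Hm{1}(Q) = 1$; and every $x \in Q$ contains $\alpha$ at arbitrarily large positions, whence $Q \subseteq R(\Theta)$ and $\Hm{1}(R(\Theta)) \geq \Hm{1}(Q) = 1$.

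The one point requiring care, and the only genuine obstacle, is the sign in Definition $\ref{psir}$, which asks for $\sum_{j=m_1}^{m_2}\vt_{j,i(j,\calF_{\Theta}(x))} > 2\pi$ rather than merely a large absolute value. From Lemma $\ref{vtniformula}$ together with the identity (from the proof of Corollary $\ref{absthetasum}$) that $i(n,\calF_{\Theta}(x))$ is even exactly when $x_n=1$, one checks that across an alternating block the summands share a common sign, equal to $+\theta$ precisely when the starting index $m_1$ is odd. Hence I would arrange the blocks to occur at odd positions. This loses no measure: the set of $x$ containing $\alpha$ at infinitely many odd positions already has full $\Hm{1}$-measure, for instance by the second Borel--Cantelli lemma applied to the independent events ``$\alpha$ occurs at position $2\ell M+1$'', $\ell \in \N$, which live on pairwise disjoint digit-windows. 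Restricting the construction to such occurrences makes each block contribute exactly $+M\theta > 2\pi$, giving $\Theta \in \Psi_R^1$; this is the same sign bookkeeping implicit in the proof of Lemma $\ref{tripleintersection}$, and the remainder is a routine specialisation of that argument to the constant sequence.
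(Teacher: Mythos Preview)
Your proposal is correct, but it takes a more elaborate route than the paper. The paper's proof of $\Psi_c\subset\Psi_R^1$ bypasses the $R_n$, $P_n$, $Q$ machinery entirely: since $\theta$ is constant, a single direct appeal to the Normal Number Theorem gives that the fixed block $\alpha=1010\cdots10$ of length $M$ occurs infinitely often in the binary expansion of $\Hm{1}$-almost every $x\in[0,1]$, and then Corollary~\ref{absthetasum} gives $\bigl|\sum\vt_{n,i(n,\calF_\Theta(x))}\bigr|=M\theta>2\pi$ over each such block. Your reconstruction via Lemma~\ref{RN0andc} and a $\limsup$ argument reproduces exactly this conclusion, only less directly.

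Where you are actually more careful than the paper is on the sign issue: you correctly observe that Definition~\ref{psir} asks for $\sum\vt>2\pi$ without absolute value, and that an alternating block starting at position $m_1$ yields all summands equal to $+\theta$ precisely when $m_1$ is odd. The paper, both here and in Lemma~\ref{tripleintersection}, simply writes the absolute value and passes immediately to $x\in R(\Theta)$; this is harmless for the downstream application (Lemma~\ref{unrect} only needs a full rotation in either direction), but strictly speaking your Borel--Cantelli argument on disjoint odd-indexed windows is the cleanest way to match the definition as stated. So your approach buys a genuine repair of a small slip, at the cost of some extra scaffolding that the constant-$\theta$ case does not really need.
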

\begin{proof}
Let $\Theta:=\{\theta_n=\theta\}_{n\in\N}$. As $(cos\theta)^{-1}=:c>1$
$$\prod_{n=1}^{\infty}(cos\theta_n)^{-1}=\lim_{n\rightarrow \infty}c^n=\infty$$
so that $\Theta \in \Psi_{\infty}.$

Let $M$ be an even integer satisfying $M\theta>2\pi$. Set $\alpha=a_1...a_M$ to be the base 2 string of $M$ digits satisfying
\begin{equation}
a_i=
\begin{cases}
1 \ \ i\hbox{ odd} \cr
0 \ \ i \hbox{ even}.
\end{cases}
\end{equation}
By the Normal Number Theorem, the sequence $\alpha$ occurs infinitely often in $\Hm{1}$-almost all $x\in [0,1]$. In particular for $\Hm{1}$-almost all $x\in [0,1]$ and all $n\in\N$ there is an $m>n$ such that 
$$x=x_0.x_1...x_n...x_ma_1a_2...a_Mx_{m+M+1}...$$
so that, by Corollary $\ref{absthetasum}$,
$$\left|\sum_{n=m+1}^{m+M}\vt_{n,i(n,\calF_{\Theta}(x))}\right|=\sum_{n=m+1}^{m+M}\theta_n=M\theta>2\pi.$$
That is, $x\in R(\Theta)$. It follows that $\Theta\in \Psi_R^1$.
\end{proof}
\begin{Remark}
As mentioned in Remark $\ref{kochrectrem}$, that $\At$ is purely unrectifiable for $\Theta \in \Psi_c$ follows from $\At$ being a Koch curve. However, as a Corollary to the above Lemma we give a short direct proof of the pure unrectifiability of certain subsets of $\At$ for $\Theta \in \Psi_c$, which is sufficient for our classification.
\end{Remark}
 \begin{Corollary}\label{psicunrect}
Let $\Theta\in \Psi_c$, then there exists a compact purely unrectifiable subset $\Gamma \subset\At$.
\end{Corollary}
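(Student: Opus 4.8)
The plan is to package together the measure-theoretic facts already proved for sets $\At$ with $\Theta\in\Psi_c$: extract a compact subset of positive measure inside $R(\Theta)$ and push it forward under $\calF_{\Theta}$. First I would invoke Lemma \ref{pcsubp0pinfty} to conclude that $\Theta\in\Psi_R^1$, so that $\Hm{1}(R(\Theta))\geq 1>0$, and I would record via Proposition \ref{rmeas} that $R(\Theta)$ is a Borel, hence $\Hm{1}$-measurable, subset of $[0,1]$.

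Since $\Hm{1}$ restricted to $[0,1]\subset\R$ coincides with Lebesgue measure, it is a Radon measure and in particular inner regular. I can therefore choose a compact set $K\subset R(\Theta)$ with $\Hm{1}(K)>0$, and set $\Gamma:=\calF_{\Theta}(K)$. By Proposition \ref{F1props}, $\calF_{\Theta}$ is a bicontinuous bijection, so $\Gamma$ is the continuous image of a compact set and hence compact; moreover $K\subset[0,1]$ and $\calF_{\Theta}([0,1])=\At$ give $\Gamma\subset\At$, as required. Proposition \ref{HF1infinite} then yields $\Hm{1}(\Gamma)\geq\Hm{1}(K)/8>0$, indeed $\Hm{1}(\Gamma)=\infty$ because $\Theta\in\Psi_c\subset\Psi_{\infty}$, so that $\Gamma$ is a genuinely nontrivial example rather than a trivially unrectifiable null set.

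It remains to verify pure unrectifiability. Since $K\subset R(\Theta)$ we have $\Gamma=\calF_{\Theta}(K)\subset\calF_{\Theta}(R(\Theta))$, and the latter is purely unrectifiable by Lemma \ref{unrect}, which applies precisely because $\Hm{1}(R(\Theta))>0$. Pure unrectifiability passes to subsets directly from Definition \ref{regproperties}: any $1$-rectifiable $F\subset\Gamma$ is also a $1$-rectifiable subset of $\calF_{\Theta}(R(\Theta))$, and therefore $\Hm{1}(F)=0$. Hence $\Gamma$ is a compact, purely unrectifiable subset of $\At$.

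Since the argument is largely an assembly of results proved above, there is no single difficult estimate to carry out; the only step requiring genuine care is the passage from the merely Borel set $R(\Theta)$ to a compact subset of positive measure, which is exactly where inner regularity of $\Hm{1}$ on the line is invoked.
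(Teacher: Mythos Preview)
Your proof is correct and follows essentially the same approach as the paper: both invoke Lemma~\ref{pcsubp0pinfty} to get $\Hm{1}(R(\Theta))=1$, extract a compact subset of $R(\Theta)$ of positive measure via regularity of Lebesgue measure (the paper phrases this as outer regularity on the complement, you as inner regularity directly), push it forward under $\calF_{\Theta}$, and then appeal to Lemma~\ref{unrect}. Your version is somewhat more explicit in justifying compactness of $\Gamma$ via Proposition~\ref{F1props} and in spelling out that pure unrectifiability is inherited by subsets, which the paper leaves implicit.
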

\begin{proof}
Let $\Theta \in \Psi_c$. By Lemma $\ref{pcsubp0pinfty}$ $\Psi_c\subset \Psi_R^1$ and thus $\Hm{1}(R(\Theta))=1$. It follows that there is an open set, $U\supset [0,1]\sim R(\Theta)$, with $\Hm{1}(U)<1$ and thus a compact set, $A\subset R(\Theta)$, with $\Hm{1}(A)>0$. The result now follows from Lemma $\ref{unrect}$ with $\Gamma:=\Ft(A)$.
\end{proof}

The proof of the main existence theorem, Theorem $\ref{main2}$, now follows by combining Lemmata $\ref{lem2}$, $\ref{lem10}$, $\ref{unrect}$, and $\ref{tripleintersection}$ and Proposition $\ref{HF1infinite}$. In proving Theorem $\ref{main2}$ we also complete our classification, Theorem $\ref{classification}$. We present a formal proof of Theorem $\ref{classification}$ directly following the proof of Theorem $\ref{main2}$.
\begin{Theorem2}
There is a compact set $A\subset \R^2$ with the following properties
\begin{enumerate}[(i)]
\item $A\in R(w,\rho,\delta;1)$,
\item $dim_{\Hm{}}A=1$,
\item $A$ is neither weakly nor strongly locally $\Hm{1}$ finite,
\item $A$ is not $\Hm{1}$-$\sigma$-finite and
\item $A$ is purely unrectifiable.
\end{enumerate}
\end{Theorem2}
\begin{proof}
From Lemma $\ref{tripleintersection}$ we may choose $\Theta\in \Psi_0\cap\psi_{\infty}\cap\Psi_{R}^1.$ Since $\Theta\in \Psi_R^1$, $\Hm{1}(R(\Theta))$=1. We deduce that $\Hm{1}(D \cup([0,1]\sim R(\Theta)))=0$ and thus that there exists an open set $U\supset D\cup([0,1]\sim R(\Theta))$ with $\Hm{1}(U)<1$. Define
$$A_0:=[0,1]\sim U\subset R(\Theta)$$
and
$$A:=\Ft(A_0).$$
We note that $\Hm{1}(A)>0$ and that $A$ is compact. 

Since $E(\Theta)=\Ft(D)$ we see that $A\subset \At\sim E(\Theta)$ so that, since $\Theta \in \Psi_0$, (i) follows from Lemma $\ref{lem10}$. (ii) now follows from Lemma $\ref{lem2}$.

Since $\Theta\in\Psi_{\infty}$, it follows from Proposition $\ref{HF1infinite}$ that 
$$\Hm{1}(\Ft^{-1}(B))=0$$ 
for any $B\subset A$ with $\Hm{1}(B)<\infty$. We deduce that $\Hm{1}(\Ft^{-1}(S))=0$ for any $\Hm{1}$-$\sigma$-finite set $S\subset A$, and therefore, since $$\Hm{1}(\Ft^{-1}(A))=\Hm{1}(A)>0,$$ 
$A$ is not $\Hm{1}$-$\sigma$-finite which proves (iv).

Should $\rho_x>0$ exist for each $x\in A$ with 
$$\Hm{1}(B_{\rho_x}(x)\cap A)<\infty,$$ 
then, since $A$ is compact, there exists a collection of finitely many balls, $\{B_{\rho_i}(x_i)\}_{i=1}^P$ with $x_i\in A$ for $i\in \{1,...,P\}$, satisfying
$$\Hm{1}(B_{\rho_i}(x_i)\cap A)<\infty \ \ \hbox{ and } A=\bigcup_{i=1}^P(B_{\rho_i}(x_i)\cap A).$$
Since $A$ is not $\Hm{1}$-$\sigma$-finite this is impossible. We deduce the existence of $x\in A$ with 
$$\Hm{1}(B_{\rho}(x)\cap A)=\infty$$
 for all $\rho>0$, showing (iii).
 
 Finally, since $A\subset R(\Theta)$ and $\Hm{1}(A)>0$, (v) follows from Lemma $\ref{unrect}$.
\end{proof}

\begin{Theorem1}
The properties defined in Definition $\ref{defa}$ satisfy the classification given in the table below with respect to the questions given in Question $\ref{ques1}$.
\begin{equation} 
\begin{tabular}[h]{lccc}

\hbox{Property} &  & \hbox{Question} &     \nonumber \\
\hline
& & &  \nonumber \\
& (1) & (2) & (3) \nonumber \\
&  &  \hbox{ (a), (b)}&     \nonumber \\ \hline
& & &  \nonumber \\
$w j$ & \hbox{No} & \hbox{No, No}  & No  \nonumber \\
$w\rho j$  & \hbox{No} & \hbox{No, No} & No  \nonumber \\
$w\rho_0 j$ & \hbox{No}  & \hbox{No, No} & No  \nonumber \\
$w\delta j$ & \hbox{Yes} & \hbox{No, No}& No  \nonumber \\
$w\rho\delta j$ & \hbox{Yes} &  \hbox{No, No}& No  \nonumber \\
$w\rho_0\delta j$ & \hbox{Yes} & \hbox{Yes, Yes} & Yes  \nonumber \\
$s j$ & \hbox{Yes}  & \hbox{No, No} & Yes \nonumber \\
$s\rho j$ & \hbox{Yes} & \hbox{Yes, No}& Yes  \nonumber \\
$s\rho_0 j$ & \hbox{Yes} &  \hbox{Yes, Yes}& Yes  \nonumber \\
$s \delta j$ & \hbox{Yes} & \hbox{No, No} & Yes  \nonumber \\
$s \rho\delta j$& \hbox{Yes}  & \hbox{Yes, No} & Yes \nonumber \\
$s\rho_0\delta j$ & \hbox{Yes} & \hbox{Yes, Yes} & Yes  \nonumber \\ \hline

\end{tabular}
\end{equation}
\end{Theorem1} 
\begin{proof} 
The positive answers follow from Corollary $\ref{cor1}$ and Lemma $\ref{propertyiv}$. 

That $\calN\in R(s,\rho,\delta;1)$ and $\Lambda\in R(s,\emptyset, \delta; 1)$ follow from Proposition $\ref{easyproperties}$. That $\calN$ and $\Lambda$  additionally satisfy the measure properties required to answer questions $s\rho\delta j$ (2) (b) and $s\delta j$ (2)((a) and (b)) with no respectively follows from Lemma $\ref{easybadprops}$.

By Lemma $\ref{lem10}$ (ii)
$$R(w,\rho_0,\emptyset ; 1) \cap \calA(\Psi_c) \supset R(w,\rho_0,\emptyset ; 1) \cap \calA(\Psi_c,\eta(\delta))\not= \emptyset$$
for all $\delta >0$ so that, by Lemma $\ref{lem12}$, the answers to $w\rho_0 j$ (1), (2)((a) and (b)), and (3) are no.

We infer from Theorem $\ref{main2}$ that the answers to $w\rho \delta j$ (2)((a) and (b)) and (3) are no.

As
\begin{eqnarray}
R(s,\rho,\delta;1)&\subset & R(s,\rho,\emptyset; 1), \nonumber \\
R(s,\emptyset,\delta;1)&\subset & R(s,\emptyset,\emptyset; 1), \nonumber \\
R(w,\rho,\delta;1)&\subset & R(w,\emptyset,\emptyset; 1)  \ \ \hbox{ and }\nonumber \\
R(w,\rho_0,\emptyset;1) & \subset & R(w,\rho,\emptyset; 1) \subset R(w,\emptyset,\emptyset; 1), \nonumber
\end{eqnarray}
we deduce from the preceding three paragraphs that the answers to the remaining questions are no.
\end{proof}

\begin{Remark}
We have only given counter examples for $j=1$ showing that the questions answered with no cannot be answered with yes for all $j$. However, for any given one of the counter examples, say $A$, considered above, $A$ can be extended to be a counter example in dimension $j$ by taking $A\times [0,1]^{j-1}$.
\end{Remark}

\section{Relationship with singular sets}
As has been mentioned, this investigation was made with application to the singular sets of geometric flows in mind. In this final section we observe a question arising from this consideration. Of the defined approximation properties, we note that, most particularly, the $w\rho\delta j$ property has been shown (Simon \cite{simon2}) to be applicable to the singular sets of minimal surfaces. The $w\rho\delta j$ property also has the most interesting classification in the sense of the preceding sections. This, in the sense that sets in $R(w,\rho,\delta ;j)$ must be $j$-dimensional, need not have any other regularity properties, but that the examples of `poor behaviour' appear to be necessarily complex.

The facts that the $w\delta j$ property has the same classification as the $w\rho\delta j$ property, and that $\Lambda$ satisfies the $w\delta j$ property lead us to ask whether a `simpler' set in $R(w,\rho,\delta ;j)$ that does not  have locally finite measure exists, or whether such sets really are necessarily complex? We answer this question for general dimensions below by proving that such sets are indeed necessarily complicated. The sets must be complicated in the sense that no point of infinite density in a set $A\in R(w,\rho,\delta ;j)$, say $y$, may be an element of a piece of Lipschitz graph in $A$. That is, $A$ must accumulate infinite measure around $y$ without any part of $A$ being differentiable. This property could certainly be a stepping stone in showing the regularity of the singular set of geometric flows. 

After defining density, the proof of the complexity of counter examples to question $w\rho\delta j$ (2) can be proven directly.

\begin{Definition}\label{lowerdensity}
For a subset $A \subset \R^n$, $y\in \R^n$, and $m \leq n$ we define the lower $m$-dimensional density of $A$ at $y$ to be 
\[\Theta^m_*(A,\Hm{m},y):= \liminf_{\rho \rightarrow \infty}\frac{\Hm{m}(A\cap B_{\rho}(y))}{\omega_m\rho^m}\]
where $\omega_m$ denotes the $\Hm{m}$ measure of the unit $m$-ball.
\end{Definition}

\begin{Theorem}\label{thm52}
Let $A\subset \R^n$ and suppose that $y\in A$, $\rho_1>0$, $G_y\in G(n,m)$, and that a Lipschitz function $u:G_y \rightarrow G_y^{\perp}$ exist with 
$$\Theta^m_*(A ,\Hm{m}, y)=\infty, \ \ y\in U, \ \ \hbox{ and } \ \ \overline{B_{\rho_1}(y) \cap A \cap U} = U \cap \overline{B_{\rho_1}(y)},$$
where $U:=graph(u)$. Then $A\not\in R(w,\rho,\delta;m)$.
\end{Theorem}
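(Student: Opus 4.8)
The plan is to argue by contradiction. Assume $A\in R(w,\rho,\delta;m)$. Since this is the \emph{fine} property, the underlying $w\rho m$ property holds for every parameter value; so it suffices to exhibit a single $\delta'>0$ (depending only on $m$ and $\ell:=\mathrm{Lip}(u)$) for which the $w\rho m$ property \emph{fails} at $y$, as this already excludes $A$ from $R(w,\rho,\delta;m)$. Concretely, if $\rho_y$ denotes the radius that the $w\rho m$ property with parameter $\delta'$ assigns to $y$, I will produce a point $p\in B_{\rho_y}(y)\cap A$ and a scale $\rho\le\rho_y$ such that no $m$-plane $L\ni p$ satisfies $B_{\rho}(p)\cap A\subset L^{\delta'\rho}$. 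I write points of $\Rn$ as $(\xi,\zeta)$ with $\xi\in G_y$, $\zeta\in G_y^{\perp}$, so $U=\{(\xi,u(\xi))\}$ and $c_0:=(1+\ell^2)^{-1/2}=\cos(\arctan\ell)$.

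First I record the existence of ``excess'' points off the graph. Being a Lipschitz graph, $U$ obeys $\Hm{m}(U\cap B_{\sigma}(y))\le\omega_m(1+\ell^2)^{m/2}\sigma^m$, so $U$ has finite upper $m$-density at $y$. If some ball $B_r(y)$ met $A$ only inside $U$, then $A$ would inherit this finite density bound, contradicting $\Theta^m_*(A,\Hm{m},y)=\infty$. Hence there are points $x\in A\setminus U$ arbitrarily close to $y$. For such an $x=(\xi_x,\zeta_x)$ put $H:=|\zeta_x-u(\xi_x)|>0$ (the transverse gap to the graph) and let $p:=(\xi_x,u(\xi_x))\in U$, so that $x-p\in G_y^{\perp}$ with $|x-p|=H$. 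Using the standing hypothesis $\overline{B_{\rho_1}(y)\cap A\cap U}=U\cap\overline{B_{\rho_1}(y)}$, which says $A\cap U$ is dense in $U$ near $y$, I replace $p$ by a point of $A\cap U$ within distance $\varepsilon H$ of it (still called $p$); note $H\to 0$ and $p\to y$ as $x\to y$.

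The decisive step is a rigidity argument at the point $p$ and the scale $\rho:=2H$, applied once $x$ is so close to $y$ that $\rho\le\rho_y$ and $p\in B_{\rho_y}(y)$; then $x\in B_{\rho}(p)$ since $|x-p|\le(1+\varepsilon)H<\rho$. Suppose some $m$-plane $L\ni p$ gave $B_{\rho}(p)\cap A\subset L^{\delta'\rho}$. Because $U\cap B_{\rho}(p)\subset\overline A$, a closure argument yields $U\cap B_{\rho}(p)\subset L^{\delta'\rho}$ as well. Now $U\cap B_{\rho}(p)$ contains the graph over the full $G_y$-disc of radius $c_0\rho$ about $\xi_x$; this spread-out piece lies in the thin slab $L^{\delta'\rho}$, and since $\mathrm{Lip}(u)=\ell$ forces $\|Du\|\le\ell$ (whence all secant directions of $U$ sit in the Lipschitz cone of half-aperture $\arctan\ell$ about $G_y$), the principal angle $\beta$ between $L$ and $G_y$ must satisfy $\cos\beta\ge c_0-C_m\delta'/c_0$. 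On the other hand $x-p\in G_y^{\perp}$ with $|x-p|=H=\rho/2$ and $d(p,L)\le\delta'\rho$, so
\[
d(x,L)\ \ge\ H\cos\beta-d(p,L)\ \ge\ H\bigl(c_0-C_m\delta'/c_0\bigr)-2\delta'H .
\]
Choosing $\delta'=\delta'(m,\ell)$ small enough that the right-hand side exceeds $2\delta'H=\delta'\rho$, we get $x\notin L^{\delta'\rho}$, contradicting $x\in A\cap B_{\rho}(p)\subset L^{\delta'\rho}$. Thus no such $L$ exists: the $w\rho m$ property with parameter $\delta'$ fails at $y$, and $A\notin R(w,\rho,\delta;m)$.

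The geometric heart — and the step I expect to be the main obstacle — is the \emph{pinning estimate} that a Lipschitz-graph piece which spreads over a $c_0\rho$-disc of $G_y$ and is trapped in a one-sided slab $L^{\delta'\rho}$ forces $L$ into the Lipschitz cone of $U$, i.e.\ $\cos\beta\ge c_0-O(\delta')$; this is exactly where the graph hypothesis and the \emph{one-sidedness} of the approximation interact, and it must be made quantitative uniformly in the scale $\rho$ and uniformly over the directions of $G_y$ (using $\|Du\|\le\ell$ rather than merely a coordinatewise slope bound). The complementary subtlety is purely bookkeeping: one must keep the two $O(\delta'H)$ errors — the density perturbation $\varepsilon H$ that nudged $p$ into $A\cap U$, and the offset $d(p,L)\le\delta'\rho$ — strictly below the order-$H$ transverse gap generated by $x-p\perp G_y$. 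Everything else (finite density of $U$, the existence of excess points, and the reduction to a single $\delta'$) is soft.
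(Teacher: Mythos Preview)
Your argument is correct and shares the paper's overall scaffolding: assume $A\in R(w,\rho,\delta;m)$, fix a small $\delta'$ depending only on $\ell=\mathrm{Lip}(u)$, use $\Theta^m_*(A,\Hm{m},y)=\infty$ versus the finite density of the Lipschitz graph $U$ to produce an off-graph point $x\in A\setminus U$ arbitrarily close to $y$, and derive a contradiction at the scale $\rho\sim 2H$ where $H$ is the transverse gap $|x-u(\pi_{G_y}x)|$. The implementations diverge at the core geometric step. The paper centers the approximation ball at $x$ (already in $A$, so no perturbation is needed), observes that both $x\in L$ and $z:=u(\pi_{G_y}x)\in\overline{L^{\delta\rho}}$ force $L$ to contain a direction nearly in $G_y^{\perp}$, and then shows that $\pi_{G_y}(L^{\delta\rho}\cap B_{\rho}(x))$ is a strip of width $O(\delta\rho)$ in one $G_y$-direction; restricting $u$ to that direction and using that the graph must exit $B_{\rho}(x)$ while staying in the strip violates the Lipschitz bound. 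You instead center at $p\in A\cap U$ (which costs you the $\varepsilon H$ perturbation) and run the dual argument: the graph patch pins $L$ to within principal angle $\arctan\ell+O(\delta')$ of $G_y$, so the transverse vector $x-p\in G_y^{\perp}$ lands outside $L^{\delta'\rho}$. Your pinning estimate is true and is indeed the crux; one clean way to prove it in all dimensions is to note that the secant map $e\mapsto v_e:=(q^e_+-q^e_-)/|q^e_+-q^e_-|$ is odd and sends each $v_e$ within $\delta'/c_0$ of $L$, so $e\mapsto\pi_L(v_e)/|\pi_L(v_e)|$ is an odd continuous map $S^{m-1}\to S^{m-1}\subset L$, hence surjective by Borsuk--Ulam, and any unit $l\in L$ is then within $O(\delta')$ of some $v_e$, whence $|\pi_{G_y}(l)|\ge|\pi_{G_y}(v_e)|-O(\delta')\ge c_0-O(\delta')$. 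The paper's route avoids this topological step at the price of a more hands-on endgame; yours packages the geometry into a single reusable lemma.
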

\begin{proof}
Suppose $A\in R(w,\rho,\delta;m)$. Take $\tilde{\rho_y} \leq \rho_1$ such that for all $\rho\leq \tilde{\rho_y}$ 
\[\frac{\Hm{m}(B_{\rho}(y) \cap A)}{w_m\rho^m}\geq2\sqrt{1+(Lipu)^2}.\]
Take $\delta < \min\{(24Lipu)^{-1},1/16\}$. Further, take $\rho_y \leq \tilde{\rho_y}$ to be the radius, dependent on $y$, given by the definition of the $w\delta j$-property with respect to $\delta$ and $j=m$.

Suppose now that for some $x\in A \cap B_{\rho_y}(y)$ and $\rho \in (0,\rho_y]$ 
\[U \cap (B_{\rho}(x) \sim \overline{L_{x,\rho}^{\delta \rho}})\not= \emptyset.\]
Then, there exists a $p \in U$ and $r_p >0$ such that 
\[B_{r_p}(p) \subset B_{\rho}(x) \sim \overline{L_{x,\rho}^{\delta \rho}}.\]
Since
\[\overline{A \cap U \cap B_{\rho_1}(y)} = U \cap \overline{B_{\rho_1}(y)},\]
it follows that there exists a $w\in B_{r_p}(p)\cap A$ which is a contradiction to $A\in R(w,\emptyset, \delta;m)\subset R(w,\rho,\delta;m)$.

We can therefore assume that for all $x\in A$, $\rho \in (0,\rho_y]$ and $P\subseteq U$
\[B_{\rho}(x) \cap P \subset B_{\rho}(x) \cap \overline{L_{x,\rho}^{\delta\rho}}.\]
Note now that 
\begin{eqnarray}
\Hm{m}(B_{\delta \rho_y}(y)\cap U)  \leq  \Hm{m}(u(\pi_{G_y}(B_{\delta\rho_y}(y)))) 
 \leq  w_m\rho_y^m\delta^m\sqrt{1+(Lipu)^2} 
 \leq  \frac{\Hm{m}(A \cap B_{\delta \rho_y}(y))}{2}. \nonumber
\end{eqnarray}
It follows that there exists
\[x\in (A \cap B_{\delta \rho_y}(y)) \sim U.\]
Suppose now that $u(\pi_{G_y}(x))\not\in B_{\rho_y /3}(y)$. Then 
\begin{eqnarray}
|u(\pi_{G_y}(x))-u(\pi_{G_y}(y))|  >  \frac{\rho_y}{3} 
 > \frac{|\pi_{G_y}(x) - \pi_{G_y}(y)|}{3\delta} 
 >  Lipu|\pi_{G_y}(x) - \pi_{G_y}(y)|. \nonumber
\end{eqnarray}
This contradiction ensures that $u(\pi_{G_y}(x))\in B_{\rho_y /3}(y)$. 

We now write $z:=u(\pi_{G_y}(x))$. Further, by otherwise shifting $A$, we can assume that $x=0$. Consider now $B_{2d(x,z)}(x)$ and note that $x\in B_{\rho_y}(y)$ and $2d(x,z)<2\rho_y(\delta + 1/3) < \rho_y$. 

Writing $\rho_x:=2d(x,z)$ there therefore exists an $m$-dimensional plane $L_{x,\rho_x} \in G(n,m)$ such that 
\[L_{x,\rho_x}^{\delta \rho_x}\cap B_{\rho_x}(x) \supset A \cap B_{\rho_x}(x).\]
We argue as above to find that this implies
\begin{equation}\label{uxball}
\overline{L_{x,\rho_x}^{\delta \rho_x}\cap B_{\rho_x}(x)} \supset U \cap \overline{B_{\rho_x}(x)}.
\end{equation}
Since $x,z \in \overline{A} \cap B_{\rho_x}(x)$, $x,z \in \pi_{G_y}^{-1}(\pi_{G_y}(x))=\pi_{G_{y}}^{-1}(0)$ and $|x-z|=\rho_x/2$ it follows that a unit vector $l \in L_{x,\rho_x}$ exists satisfying
\begin{equation}
|\langle l,v \rangle |\leq 2\delta
\label{bigthm1}
\end{equation}
for all unit vectors $v \in G_y$.

Now, let $\{l,l_2, ..., l_m\}$ be an orthonormal basis for $L_{x,\rho_x}$ and note that $L_y:=$ span$(\{\pi_{G_y}(l_i)\}_{i=2}^m)$ is a subspace of $G_y$ with dimension no more than $m-1$. Further, we write 
\[L_i:=\{t\pi_{G_y}(l_i):t\in \R\}.\]
As $G_y$ is an $m$-dimensional plane, we can find some $v\in G_y$ such that $\la v,w\ra=0$ for all $w\in L_y$. By rotation, we assume that $w=e_1$ and $l_i=c_ie_i$ for $i=2,...,m$, where $c_i \in [0,1]$ and $\{e_i\}_{i=1}^n$ denotes the canonical orthonormal basis in $\R^n$.

If $\pi_{G_y}(L_{x,\rho_x})\subset L_y$ we see that
\[\pi_{G_y}(L_{x,\rho_x}^{\delta \rho_x}\cap\overline{B_{\rho_x}(x)}) \subset [-\delta\rho_x , \delta \rho_x]\times\prod_{i=1}^mL_i.\]
Otherwise $\pi_{G_y}(l)\not=\{0\}$ in which case we can take $w:=\frac{\pi_{G_y}(l)}{|\pi_{G_y}(l)|}$ (again, by rotation we assume $w=e_1$). Moreover, in this case, by ($\ref{bigthm1}$) we see that
\[\pi_{G_y}(L_{x,\rho_x}^{\delta \rho_x}\cap\overline{B_{\rho_x}(x)}) \subset [-6\delta\rho_x , 6\delta \rho_x]\times\prod_{i=1}^m L_i.\]
In either case, therefore, we now have
\begin{equation}\label{strip}
\pi_{G_y}(L_{x,\rho_x}^{\delta \rho_x}\cap\overline{B_{\rho_x}(x)}) \subset [-6\delta\rho_x , 6\delta \rho_x]\times\prod_{i=1}^m L_i.
\end{equation}
We now consider $u|_{\R_1}$. Note that $u|_{\R_1}$ is a Lipschitz function with Lip$u|_{\R_1} \leq $ Lip$u$.

Noting that, by ($\ref{uxball}$) and ($\ref{strip}$),
\begin{enumerate}
\item $u|_{\R_1}(-7\delta \rho_x)\not\in B_{\rho_x}(x)$,
\item $u|_{\R_1}(0)=u|_{\R_1}(\pi_{G_y}(x))=u(\pi_{G_y}(x))=z\in B_{\rho_x}(x)$,
\item $U_1:=graph(u|_{\R_1})$ is connected, and
\item $\pi_{G_y}(U_1\cap \partial B_{\rho_x}(x)) \subset \pi_{G_y}(\overline{U \cap B_{\rho_x}(x)})\cap \R_1 \subset [-6\delta \rho_x , 6\delta \rho_x]$,
\end{enumerate}
it follows that there exists a $\tilde{z} \in [-6\delta \rho_x , 0)$ such that $u|_{\R_1}(\tilde{z})\in \partial B_{\rho_x}(x)$. Thus $|\tilde{z}-\pi_{G_y}(z)|\leq 6\delta \rho_x$ and hence
\begin{eqnarray}
|u(\tilde{z})-u(\pi_{G_y}(z))|  \geq  d\left(\pi_{G_y^{\perp}}\left(\overline{\partial B_{\rho_x}(x) \cap L_{x,\rho_x}^{\delta, \rho}}\right)\cap \R_1,z\right) 
 >  \frac{\rho_x}{4} 
 \geq  \frac{\rho_x |\tilde{z} - \pi_{G_y}(z)|}{24\delta \rho_x} 
 >  Lipu|\tilde{z} - \pi_{G_y}(z)|. \nonumber
\end{eqnarray}
This contradiction implies that the assumption, $A\in R(w,\rho,\delta;m)$, is false, completing the proof.
\end{proof}

\begin{bibdiv}
\begin{biblist}

\bib{borel}{article}{
title={Les Probabilit\'es D\'enombrables et Leurs Arithm\'etiques},
author={Borel, \'E.},
journal={Rend. Circ. Mat. Palermo},
volume={27},
date={1909},
pages={247--271}
}
\bib{brakke}{book}{
title={The Motion of a Surface by its Mean Curvature},
author={Brakke, K.},
date={1978},
publisher={Princeton Univ. Press}
}
\bib{davidtoro}{article}{
title={Reifenberg flat metric spaces, snowballs, and embeddings},
author={David, G.},
author={Toro, T.},
journal={Math. Ann.},
volume={315},
date={1999},
pages={385--449}
}
\bib{davkentor}{article}{
title={Asymptotically optimally doubling measures and Reifenberg flat sets with vanishing constant},
author={David, G.},
author={Kenig, C.},
author={Toro, T.},
journal={Comm. Pure Appl. Math.},
volume={54},
date={2001},
pages={217--252}
}
\bib{davdeptor}{article}{
title={A generalization of Reifenberg's theorem in $R^3$},
author={David, G.},
author={de Pauw, T.},
author={Toro, T.},
journal={Geom. Funct. Anal.},
volume={18},
number={4},
date={2008},
pages={1168--1235}
} 
\bib{depkoe}{article}{
title={Linearly approximatable functions},
author={de Pauw, T.},
author={Koeller, A.},
journal={Proc. Amer. Math. Soc.},
volume={137},
date={2009},
pages={1347--1356}
}
\bib{ecker1}{book}{
title={Regularity Theory for Mean Curvature Flow},
author={Ecker, K.},
date={2004},
publisher={Birkh\"auser},
}
\bib{evandga}{book}{
title={Measure Theory and Fine Properties of Functions},
author={Evans, L. C.},
author={Gariepy, R. F.},
date={1992},
series={Studies in Advanced Mathematics},
publisher={CRC Press},
}
\bib{falconer}{book}{
title={Fractal Geometry: Mathematical Foundations and Applications},
author={Falconer, K.J.},
date={1990},
publisher={John Wiley \& Sons},
}
\bib{federer}{book}{
title={Geomteric Measure Theory},
author={Federer, H.},
date={1969},
publisher={Springer-Verlag},
address={Berlin-Heidelberg-New York}
}
\bib{hutch}{article}{
title={Fractals and Self Similarity},
author={Hutchinson, J.E.},
journal={Indiana Univ. Math. J.},
volume={30},
date={1981},
pages={713--747}
}
\bib{kentoro}{article}{
title={Harmonic measure on locally flat domains},
author={Kenig, C.},
author={Toro, T.},
journal={Duke Math. J.},
volume={87},
date={1997},
pages={509--551}
}
\bib{koch}{article}{
title={Sur une Courbe Continue sans Tangente, Obtenue par une Construction G\'eom\'etrique El\'ementaire},
author={von Koch, H.},
journal={Ark. Mat.},
volume={1},
date={1904},
pages={681--704}
}
\bib{mandel}{book}{
title={Fractals, Form, Chance and Dimension},
author={Mandelbrot, B.},
date={1977},
publisher={Freeman},
address={San Francisco}
}
\bib{mattila}{book}{
title={Geometry of Sets and Measures in Euclidean Spaces, Fractals and Rectifiability},
author={Mattila, P.},
series={Cambridge studies in advanced mathematics},
volume={44},
date={1995},
publisher={Cambridge University Press},
}
\bib{niven}{book}{
title={Irrational numbers},
author={Niven, I.},
series={Carus Mathematical Monographs},
volume={11},
date={1956},
publisher={Math. Asssoc. Am.},
}
\bib{reif}{article}{
title={Solutions of the Plateau problem for $m$-dimensional surfaces of varying topological type},
author={Reifenberg, R.E.},
journal={Acta Math.},
volume={104},
date={1960},
pages={1--92}
}
\bib{simon1}{book}{
title={Lectures on Geometric Measure Theory},
author={Simon, L.},
series={Proceedings of the centre for Mathematical Analysis},
volume={3},
date={1983},
publisher={ANU},
address={Canberra}
}
\bib{simon2}{article}{
title={Rectifiability of the Singular Sets of Multiplicity $1$ Minimal Surfaces and Energy Minimizing Maps},
author={Simon, L.},
journal={Surveys in Diff. Geom.},
volume={2},
date={1995},
pages={246--305}
}
\bib{simon3}{book}{
title={Theorems on Regularity and Singularity of Harmonic Maps},
author={Simon, L.},
series={ETH Lectures},
date={1996},
publisher={Birkh\"auser},
}
\end{biblist}
\end{bibdiv}


\end{document}